\newtheorem{thm}{Theorem}[section]
\newtheorem{cor}[thm]{Corollary}
\newtheorem{conj}[thm]{Conjecture}
\newtheorem{prob}[thm]{Problem}
\theoremstyle{definition}
\newtheorem{ex}[thm]{Example}
\newtheorem{lem}[thm]{Lemma}
\newtheorem{prop}[thm]{Proposition}
\theoremstyle{definition}
\newtheorem{defn}[thm]{Definition}
\newtheorem{rem}[thm]{Remark}
\numberwithin{equation}{section}
\def \N { {\mathbb N} }
\def \Z { {\mathbb Z} }
\def \P { {\mathbb P} }
\def \a { {a} }  
\def \b { {b} }
\def \cc {{c} }
\newcommand{\K}{\mathbb{K}}
\newcommand{\symm}{\mathfrak{S}}
\newcommand{\matrA}{\mathcal{A}}
\newcommand{\matrB}{\mathcal{B}}
\newcommand{\rtilde}{\widetilde{R}}
\newcommand{\atilde}{\widetilde{\a}}
\newcommand{\btilde}{\widetilde{\b}}
\newcommand{\gtilde}{\widetilde{\cc}}
\newcommand{\<}{\langle}
\renewcommand{\>}{\rangle}
\DeclareMathOperator{\diag}{diag}
\DeclareMathOperator{\spanZ}{Span_\Z}
\DeclareMathOperator{\spanN}{Span_\N}
 \title[Finiteness for chains of Laurent lattice ideals]{Finiteness theorems and algorithms for \\ permutation invariant chains of Laurent lattice ideals}
\author[Hillar]{Christopher J. Hillar}
\address{Christopher J. Hillar\\
 	 The Mathematical Sciences Research Institute\\
         17 Gauss Way\\
         Berkeley, CA 94720-5070\\
         USA}
\email{chillar@msri.org}
\urladdr{http://www.msri.org/people/members/chillar}
\author[Mart\'in del Campo]{Abraham Mart\'in del Campo}
\address{Abraham Mart\'in del Campo\\
	  Department of Mathematics\\
         Texas A\&M University\\
         College Station, TX 77843-3368\\
         USA}
\email{asanchez@math.tamu.edu}
\urladdr{http://www.math.tamu.edu/~asanchez}
\thanks{The first author was partially supported by an NSA Young Investigators Grant and an NSF All-Institutes Postdoctoral Fellowship administered by the Mathematical Sciences Research Institute through its core grant DMS-0441170.}
\thanks{The second author was
  supported in part by NSF grant 
  DMS-915211.}
\keywords{Lattice ideal, toric ideal, invariant ideals, chain stabilization, symmetric group, finiteness, permutation module, nice orderings.}
\begin{document}

\begin{abstract}
We study chains of lattice ideals that are invariant under a symmetric group action.  In our setting, the ambient rings for these ideals are polynomial rings which are increasing in (Krull) dimension.  Thus, these chains will fail to stabilize in the traditional commutative algebra sense.  However, we prove a theorem which says that ``up to the action of the group", these chains locally stabilize.  We also give an algorithm, which we have implemented in software, for explicitly constructing these stabilization generators for a family of Laurent toric ideals involved in applications to algebraic statistics.  We close with several open problems and conjectures arising from our theoretical and computational investigations. 

\end{abstract}

\maketitle

\section{Introduction}\label{intro}

In commutative algebra, finiteness plays a significant role both theoretically and computationally.  An important example is Hilbert's basis theorem, which states that any ideal $I \subseteq R$ in a polynomial ring $R = \mathbb C[x_1,\ldots,x_n]$ over the complex numbers $\mathbb C$ (or more generally, over any field $\mathbb K$) has a finite set of generators $G = \{g_1,\ldots,g_m\}$:  \[I =  \langle G\rangle_R := g_1R + \cdots + g_m R.\]  In other words, $\mathbb C[x_1,\ldots,x_n]$ is a \textit{Noetherian} ring.  Equivalently, any ascending chain of ideals $I_1 \subseteq I_2 \subseteq \cdots$ in $\mathbb C[x_1,\ldots,x_n]$ stabilizes (i.e., there exists an $N$ such that $I_N = I_{N+1} = \cdots$). This result has many applications in the algebraic theory of polynomial rings (e.g. the existence of finite resolutions~\cite[p. 340]{Eisenbud95}), but it is also a fundamental fact underlying computational algebraic geometry (e.g. termination of Buchberger's algorithm in the theory of Gr\"obner bases~\cite[p. 90]{CLO07}).  

In many contexts, however, finiteness is observed even though Hilbert's basis theorem does not directly apply.  A motivating example is the (non-Noetherian) ring $R = \mathbb C[x_1,x_2,\ldots]$ of polynomials in an \textit{infinite} number of indeterminates $X = \{x_1,x_2,\ldots\}$, equipped with a permutation action on indices.  More precisely, the \textit{symmetric group} $\symm_{\P}$ of all permutations of the positive integers $\P :=\{1,2,\ldots\}$ acts naturally on $R$ via:
\begin{equation}\label{group_poly_action}
\sigma f(x_{s_1},\ldots,x_{s_{\ell}}) := f(x_{\sigma(s_1)},\ldots, x_{\sigma(s_{\ell})}), \ \ \sigma \in \symm_{\P}, \ f \in R.
\end{equation}
Although many ideals in the ring $R$ are not finitely generated, an important subclass still admit finite presentations.  Call an ideal $I$  \textit{permutation-invariant} if it is fixed under the action of $\symm_{\P}$:
\[\symm_{\P} I:=\{ \sigma f : \sigma \in\symm_{\P}, f\in I\} = I.\]
It is known that for every such permutation-invariant $I \subseteq R$, there is a finite set of generators $G = \{g_1,\ldots,g_m\} \subset I$ giving it a presentation of the form: \[ I = \langle \symm_{\P} G \rangle_{R}.\]  
As a simple example, the ideal $M \subset \mathbb C[x_1,x_2,\ldots]$ of polynomials without constant term has the finite presentation $M = \langle \symm_{\P} x_1 \rangle_{\mathbb C[x_1,x_2,\ldots]}$ even though it is not finitely generated. 

The above finiteness property for the ring $\mathbb C[x_1,x_2,\ldots]$ was first discovered by Cohen in the context of group theory \cite{Cohen67} (see also \cite{Cohen77} for algorithmic aspects), but seems to have gone unnoticed in the commutative algebra community until its independent rediscovery recently in \cite{AH07}.  Generalizations and extensions of this result have since been applied to unify several finiteness results in algebraic statistics \cite{HS} as well as help prove open conjectures in that field (notably, the independent set conjecture \cite{Hosten2007, HS}, finiteneness for the $k$-factor model \cite{Draisma2010}, and, more recently, that bounded-rank tensors are defined in bounded degree \cite{DraismaKuttler11}). 

In this paper,  we derive new finiteness properties for certain classes of polynomial ideals that are invariant under a symmetric group  action.  Motivated by an algebraic question of Dress and Sturmfels in chemistry  \cite[Section 5]{AH07}, we prove that invariant chains of lattice ideals stabilize up to monomial localization (see Theorem \ref{laurentlatticethm} below). This general result gives evidence for Conjecture 5.10 in \cite{AH07} (stated as Conjecture \ref{conjwmonomial} below).  Moreover, for the specific chains studied there (in \cite[Section 5.1]{AH07}), we present an algorithm for explicitly constructing these generators (see Theorem \ref{ouralgthm} and Algorithm \ref{algorithmourthm} below).  Our results also have potential implications for algebraic statistics. To prepare for the precise statements, however, we need to introduce some notation.  

Given a set $S$, let $\symm_S$ denote the group of permutations of $S$.  We shall focus our attention primarily on the sets $S= [n] :=\{1,2,\ldots, n\}$ and $S= \P :=\{1,2,\ldots\}$, the set of positive integers.   In these cases, we write $\symm_n$ and $\symm_\P$,  respectively, for the symmetric groups.\footnote{We embed $\symm_n$ into $\symm_m$ for $n \leq m$ in the natural way.}  Given a positive integer $k\geq 1$, let $[S]^k$ be the set of all ordered $k$-tuples $u=(u_1,\ldots,u_k)$, and let $\<S\>^k$ be the subset of those with pairwise distinct $u_1,\ldots, u_k$. When $S=[n]$, we write $[n]^k$ and $\<n\>^k$ for $[S]^k$ and $\<S\>^k$, respectively.  

The symmetric group $\symm_S$ acts on $[S]^k$ naturally via
\begin{equation}\label{SymmPk}
\sigma(u_1,\ldots,u_k):=(\sigma(u_1),\ldots,\sigma(u_k)), \quad \text{for } \sigma\in\symm_S;
\end{equation}
and this action restricts to an action on $\<S\>^k$.

Write $X_S :=  \{x_s: s \in S \}$ for the set of indeterminates indexed by a set $S$, and let $\K[X_S]$ denote the polynomial ring with coefficients in a field $\K$ (e.g.,  $\mathbb C$ or $\mathbb R$) and indeterminates $X_S$.  The action of any group $\symm$ on $S$ induces an action on $X_S$, which we extend to an action on $\K[X_S]$ as in (\ref{group_poly_action}).

We are interested here in the highly structured \textit{$\symm$-invariant} ideals of $\K[X_S]$
(simply called \textit{invariant} ideals below if the group $\symm$ is understood);
these are ideals $I \subseteq \K[X_S]$ for which $\symm I =  I$.\footnote{In the language of \cite{AH09}, invariant ideals are also the $\K[X_S] \ast \symm$-submodules of $\K[X_S]$, where $\K[X_S] \ast \symm$ is the skew group ring associated to $\K[X_S]$ and $\symm$.} Guised in various forms, invariant ideals of polynomial rings arise naturally in many contexts.  For instance, they appear in applications of polynomial algebra to chemistry~\cite{RSU67, AH07, Draisma2010}, finiteness of statistical models in algebraic statistics and toric algebra~\cite{Santos2003, SturmfelsSullivant05, Kuo06, DSS07, Hosten2007, AH07, DLS09, Brouwer-Draisma11, AHT10, TakemuraHara10b, TakemuraHara10a, Draisma2010, Snowden10, DraismaKuttler11, HMCY2011, HS}, and the algebra of tensor rank \cite{DraismaKuttler11}. 

Given an ideal $I \subseteq R$ of a polynomial ring $R = \K[X_S]$, let $I^{\pm}$ denote the localization $I \hookrightarrow I^{\pm}$ of $I$ with respect to the multiplicative set of monomials of $R$ (including the monomial $1$). In particular, $R^{\pm}$ is the ring of \textit{Laurent polynomials} in the indeterminates of $R$, and any ideal $I \subseteq R$ lifts to an ideal $I^{\pm} \subseteq R^{\pm}$, which we call a \textit{Laurent ideal}.  In simple terms, the ideal $I^\pm$ consists of elements of the form $gh^{-1}$ where $g\in I$ and $h$ is a monomial of $R$ (see e.g. ~\cite{Eisenbud95}). An action of a group $\symm$ of automorphisms that permute the indeterminates of $R$ extends naturally to an action on $R^{\pm}$: for $\sigma\in \symm$ and $g h^{-1}\in R^\pm$, we can define $\sigma(g h^{-1}):= \sigma(g) \sigma(h)^{-1}\in R^\pm$.  In this way, any $\symm$-invariant ideal $I$ lifts to an $\symm$-invariant ideal $I^{\pm} \subseteq R^{\pm}$. 
As above, for a subset $G\subseteq R$, we let $\<G\>_R$ denote the ideal generated by $G$ over $R$.

In this paper, we work with localized (Laurent) ideals because they allow us to prove very general finiteness theorems in cases where no other known techniques are able to produce such results.

Fix a positive integer $k$.  In what follows, we are primarily concerned with the polynomial rings (and their localizations):
\begin{equation}\label{Rndefs}
\mathcal{R}_n := \K[ X_{ [ n ]^k }], \ \ \mathcal{R}_\P := \K[ X_{[\P]^k}] = \bigcup_{n \in \P} \mathcal{R}_n; \ \  R_n := \K[ X_{ \< n \>^k }], \ \ R_\P = \bigcup_{n \in \P} R_n;
\end{equation}
and $T_n := \K[t_1,\ldots,t_n]$.  Since the set $[ n ]^k$ sits naturally inside $[ m ]^k$ for $n \leq m$, we have an embedding of rings $\mathcal{R}_n\subseteq \mathcal{R}_m$; similarly,  $R_n \subseteq R_m$. Our main objects of interest will be \emph{ascending chains} $I_\circ$ of ideals $I_n\subseteq \mathcal{R}_n$ (simply called \textit{chains} below):
\begin{equation}\label{chaindef}
I_\circ := I_1\subseteq I_{2}\subseteq \cdots.
\end{equation}
In general, a chain of ideals (\ref{chaindef}) will not stabilize in the sense of Hilbert's basis theorem because the number of indeterminates in $\mathcal{R}_n$ increases with $n$. However, if the ideals comprising a chain are $\symm$-invariant, we may still be able to find an $N$ such that all the ideals $I_{N}$, $I_{N+1}, \ldots$ are the same.  We now make these notions precise (with corresponding definitions for Laurent ideals and the rings $R_n$). 

\begin{defn}\label{def:invariantChain}
A chain $I_\circ:=I_1\subseteq I_{2}\subseteq \cdots$ of ideals $I_n\subseteq \mathcal{R}_n$ is an \emph{invariant chain} if \[ \symm_m I_n  \subseteq I_m, \ \ \ \text{for all $m \geq n.$}\]
\end{defn}

\begin{defn}\label{def:stabilization}
An invariant chain $I_\circ$ \emph{stabilizes} if there is an integer $N$ such that \[\< \symm_m  I_N \>_{\mathcal{R}_m} = I_m, \ \ \ \text{for all $m \geq  N.$}\]  Such an $N$ is a \textit{stabilization bound} for the chain, and generators for $I_N$ are called \textit{generators} for $I_{\circ}$.
\end{defn}

In words, an invariant chain stabilizes when its fundamental structure is contained in a finite number of ideals comprising the chain. When $k = 1$, every invariant chain of ideals in $\{\mathcal{R}_n\}_{n\in \P}$ stabilizes \cite{AH07, HS}.  However, the corresponding fact  fails to hold for $k \geq 2$ (e.g., see \cite[Proposition 5.2]{AH07} or \cite[Example 3.8]{HS}), and more refined methods are required to detect chain stabilization.

In many applications, the invariant chains consist of toric ideals, so we shall focus our attention here on the slightly more general class of lattice ideals (see Section \ref{genLaurentThm} for definitions).  For instance, the independent set conjecture in algebraic statistics \cite[Conj.  4.6]{Hosten2007} concerns stabilization for a large family of toric chains.   

Our first main result asserts that invariant chains of lattice ideals stabilize locally, and it is similar to a chain stabilization result used in a recent proof  \cite{HS} of the independent set conjecture. We prove this result in Section \ref{genLaurentThm} using ideas from order theory as described in Section \ref{prelim}.

\begin{thm}\label{laurentlatticethm}
Every invariant chain $I_{\circ}^{\pm} := I_1^{\pm} \subseteq I_{2}^{\pm} \subseteq \cdots$ of Laurent lattice ideals $I_n^{\pm} \subseteq \mathcal{R}_n^{\pm}$ (resp. $I_n^{\pm} \subseteq R_n^{\pm}$) stabilizes.
\end{thm}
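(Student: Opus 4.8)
The plan is to reduce the stabilization of an invariant chain of Laurent lattice ideals to a statement about well-partial-orders on monomials, exploiting the fact that a Laurent lattice ideal $I^{\pm}$ is completely determined by its underlying lattice $L \subseteq \Z^{[n]^k}$ (equivalently, by the subgroup of Laurent monomials $x^{u}$ with $u \in L$ that lie in it). Passing to the Laurent setting is precisely what buys us this: over $\mathcal{R}_n^{\pm}$, a lattice ideal becomes the kernel of a monomial map, so containment $I_n^{\pm} \subseteq I_m^{\pm}$ is just containment of the associated lattices $L_n \subseteq L_m$ inside $\Z^{[m]^k}$, and invariance means $\symm_m L_n \subseteq L_m$. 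Thus the whole problem becomes: show that an ascending, $\symm$-equivariant chain of subgroups $L_1 \subseteq L_2 \subseteq \cdots$ with $L_n \subseteq \Z^{[n]^k}$ must satisfy $\spanZ(\symm_m L_N) = L_m$ for all $m \geq N$, for some $N$. Equivalently, the $\symm_{\P}$-module $\bigcup_n L_n \subseteq \bigoplus_{u \in [\P]^k} \Z$ is generated, as a $\Z\symm_\P$-module, by finitely many elements — a finiteness statement for equivariant submodules of a permutation module over $\Z$.

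**Next I would** set up the order-theoretic engine from Section~\ref{prelim}. The key point is that the poset of monomials of $\mathcal{R}_\P$ (or of exponent vectors in $\N^{[\P]^k}$), with the divisibility order refined by the $\symm_\P$-action, is a \emph{well-partial-order} in the appropriate ``nice ordering'' sense used in the Aschenbrenner--Hillar framework: any infinite sequence of monomials contains one dividing a later one after a suitable permutation. This is exactly the ingredient that powers the original result for $k=1$ and its generalizations in \cite{AH07, HS}; here we need the analogue for the index set $[\P]^k$, which is handled because $[\P]^k$, with the component-wise action of $\symm_\P$, carries a well-quasi-order under the relevant embeddings (this is a Higman-type / Dickson-type argument, essentially because $k$ is fixed). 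Using this well-partial-order on monomials together with a standard Noetherian induction (as in the proof that the $\symm_\P$-invariant ideals of $\K[x_1,x_2,\dots]$ are finitely generated), one shows that any $\symm_\P$-equivariant submodule of the permutation module $\bigoplus_{u \in [\P]^k}\Z\,e_u$ is finitely generated over $\Z\symm_\P$; the integer coefficients cause no trouble since $\Z$ itself is Noetherian and the ``leading term'' bookkeeping only uses the combinatorics of the index monomials. Given finitely many module generators, say lying in $L_N$ for $N$ large enough, one concludes $\spanZ(\symm_m L_N) = L_m$ for $m \geq N$, and then translates this back: the Laurent lattice ideals generated by $\symm_m L_N$ and by $L_m$ coincide, which is the definition of stabilization.

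**Two technical points I would handle carefully.** First, the passage between ``the chain of Laurent lattice ideals stabilizes'' and ``the chain of lattices stabilizes'' must be made precise in both directions — in particular, $\<\symm_m I_N^{\pm}\>_{\mathcal{R}_m^{\pm}}$ equals the Laurent lattice ideal of the sublattice $\spanZ(\symm_m L_N)$, which uses that the ideal generated by a set of pure binomials $x^{a}-x^{b}$ over a Laurent ring is again a lattice ideal whose lattice is the $\Z$-span of the $a-b$. Second, the case $I_n^{\pm} \subseteq R_n^{\pm}$ (the rings indexed by the distinct-entry tuples $\<n\>^k$) should be reduced to the $\mathcal{R}_n^{\pm}$ case, or run through the same machinery with $\<\P\>^k$ in place of $[\P]^k$ — the well-partial-order argument applies equally to $\<\P\>^k$, and one checks the embeddings $R_n \subseteq R_m$ are compatible with the $\symm_m$-action.

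**The main obstacle** will be establishing the well-partial-order / nice-ordering property for the monomial poset indexed by $[\P]^k$ (and $\<\P\>^k$) with the $\symm_\P$-action, and packaging it so that the Noetherian induction goes through with \emph{integer} (lattice) coefficients rather than ideal-membership. For $k=1$ this is classical; for general $k$ one needs that finite tuples over a well-quasi-ordered alphabet, under the relevant coordinate-wise embedding together with arbitrary relabeling by $\symm_\P$, still form a well-quasi-order — a Higman-lemma-style statement whose subtlety is that the "letters" are themselves integers $1,2,\dots$ permuted by $\symm_\P$, so the right notion of embedding must simultaneously account for divisibility of exponents and for an injection of indices respecting the tuple structure. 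Getting this definition right, and verifying it is a well-partial-order, is the crux; once it is in hand, the rest is the now-standard argument of \cite{AH07, HS} adapted to submodules of a permutation $\Z$-module.
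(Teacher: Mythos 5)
Your overall route is the paper's: identify the chain with the increasing family of lattices, pass to the union $L=\bigcup_n L_n$ viewed as an $\symm_{\P}$-invariant subgroup of the permutation module $\Z[\P]^k$, prove an equivariant finiteness statement over $\Z[\symm_{\P}]$ by a Higman-type well-partial-order on the index set $[\P]^k$ (resp. $\<\P\>^k$), and then translate back to ideals through binomials. (For the last translation the paper uses the telescoping identity of Lemma \ref{laurentLemma}; your observation that in the Laurent ring the ideal generated by the binomials ${\bf x}^{h_+}-{\bf x}^{h_-}$ is exactly the lattice ideal of the $\Z$-span of the $h$'s is an equally valid substitute.)

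There is, however, one genuine gap, and it sits exactly where you write ``Equivalently, the $\symm_{\P}$-module $\bigcup_n L_n$ is generated by finitely many elements'' and later ``Given finitely many module generators, say lying in $L_N$, one concludes $\spanZ(\symm_m L_N)=L_m$ for $m\geq N$.'' Finite generation over $\Z[\symm_{\P}]$ only gives, for $f\in L_m$, an expression $f=\sum_i a_i\sigma_i g_i$ with $\sigma_i\in\symm_{\P}$; a priori the $\sigma_i$ may move indices beyond $m$, with the extraneous basis elements cancelling, and plain Noetherianity of $\Z[\P]^k$ as a $\Z[\symm_{\P}]$-module (Corollary \ref{NoethCor}) does not rule this out. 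Stabilization requires the stronger assertion that the $\sigma_i$ can be chosen in $\symm_m$, which is not a formal consequence of finite generation; the paper makes precisely this point in the remark following Corollary \ref{NoethCor}. Its proof of Theorem \ref{laurentlatticethm} therefore rests on the refined Theorem \ref{mainNoethThm}, whose proof is where the specific choice of the degree-lexicographic order pays off: by Lemma \ref{holefixer}, if $s\preceq_{dlex}t$ and $|t|_\infty\leq m$ then a witness of $s\preceq_{dlex}t$ can be taken in $\symm_m$, so the head-reduction induction for $f\in B\cap A[m]^k$ never needs to leave $\symm_m$. Your plan needs this refinement (or some substitute; note that the naive fix of retracting indices $>m$ onto $[m]$ is not $\symm_m$-equivariantly available, since collapsing indices is not a permutation and need not preserve the lattice), and with it your argument becomes essentially the paper's proof.
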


Although this result is quite general, our proof is nonconstructive.  In applications, however, one usually desires bounds on chain  stabilization.  Our second main result restricts to the rings $R_n$ and provides a stabilization bound for the special case of Laurent toric chains induced by a monomial \cite[Section 5.2]{AH07}, which we study in Section \ref{stablaurchainmon}.  These toric ideals appear in applications to algebraic statistics \cite{GMV10, HS} and voting theory~\cite{DEMO09}. 

\begin{thm}\label{mainthm2}
Let $f\in \K[y_1,\ldots,y_k]$ be a monomial of degree $d$ in $k$ variables. For each $n\geq k$,  
consider the (toric) map: \[\phi_n : R_n \to T_n,\hspace{10pt}  
x_{(u_1,\ldots,u_k)}\mapsto f(t_{u_1},\ldots,t_{u_k}).\]  Let $I_n= 
\ker \phi_n$, and let $I_n^\pm$ be the corresponding Laurent ideal.  
Then $N = 2d$ is a stabilization bound for the invariant chain $I_\circ^\pm =  I_k^{\pm}  
\subseteq I_{k+1}^{\pm} \subseteq \cdots$ of Laurent ideals.
\end{thm}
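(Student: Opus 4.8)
Write $f = y_1^{a_1}\cdots y_k^{a_k}$ with each $a_j\geq 1$, so that $d = a_1+\cdots+a_k\geq k$. For $n\geq k$ let $A_n\colon \Z^{\<n\>^k}\to\Z^n$ be the exponent matrix of $\phi_n$, that is $A_n e_u = \sum_{j=1}^k a_j e_{u_j}$ for a tuple $u=(u_1,\dots,u_k)$. Then $I_n=\ker\phi_n$ is the lattice ideal of $L_n := \ker A_n\subseteq\Z^{\<n\>^k}$ and $I_n^\pm = I_{L_n}^\pm$. The map $A_n$ is $\symm_n$-equivariant, so each $L_n$ is $\symm_n$-stable and the chain $I_\circ^\pm$ is invariant; moreover, under $\<2d\>^k\subseteq\<m\>^k$ the lattice $L_{2d}$ is exactly the set of elements of $L_m$ whose supporting tuples have all entries in $[2d]$.

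The first step of the plan is to translate the assertion into a statement about the lattices $L_m$. Let $R^\pm$ be a Laurent polynomial ring in $r$ variables. For any lattice $L\subseteq\Z^r$ and any $B\subseteq L$ one has $\<\,x^b-1 : b\in B\,\>_{R^\pm} = I_{L'}^\pm$ with $L':=\spanZ B$, since $R^\pm/I_{L'}^\pm$ is the group algebra $\K[\Z^r/L']$; in particular $I_{L'}^\pm = I_L^\pm$ iff $L'=L$, as they disagree on $x^c-1$ for any $c\in L\setminus L'$. Applying this with $R^\pm=R_m^\pm$ and $B = \{\sigma(\delta):\sigma\in\symm_m,\ \delta\in L_{2d}\}$ gives $\<\symm_m I_{2d}^\pm\>_{R_m^\pm} = I_{\spanZ B}^\pm$, so ``$N=2d$ is a stabilization bound'' is equivalent to
\[
L_m \;=\; \spanZ\bigl(\,\sigma(\delta) \ :\ \sigma\in\symm_m,\ \delta\in L_{2d}\,\bigr)\qquad\text{for every }m\geq 2d.
\]
Here $\supseteq$ is immediate, so the whole content lies in proving $\subseteq$.

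I would prove $\subseteq$ from the following combinatorial core. If $\gamma\in L_m$ has at most $2d$ distinct integers among the entries of its supporting tuples, then some $\sigma\in\symm_m$ carries those integers into $[2d]$, whence $\sigma\gamma\in L_{2d}$ and $\gamma=\sigma^{-1}(\sigma\gamma)$ has the required shape; call these the \emph{small} elements. For a general $\gamma$, write $\gamma=\gamma^+-\gamma^-$ with $A_m\gamma^+=A_m\gamma^-$ and view $\gamma^\pm$ as two multisets of index-tuples realizing the same degree vector $c\in\Z_{\geq 0}^{[m]}$; the goal is to transform $\gamma^+$ into $\gamma^-$ by a finite sequence of \emph{exchange moves}, a move replacing a sub-multiset $P$ of the running multiset by a sub-multiset $Q$ with $A_m\mathbf 1_P=A_m\mathbf 1_Q$ and $P\cup Q$ involving at most $2d$ integers. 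Concatenating the moves expresses $\gamma$ as a sum $\sum_i(\mathbf 1_{P_i}-\mathbf 1_{Q_i})$ of small elements of $L_m$, hence as an element of the right-hand span. A convenient preliminary reduction, splitting $\gamma$ into connected components (tuples adjacent when they share an integer, the components having disjoint integer sets and $A_m\gamma=0$ decoupling across them), lets one assume $\gamma$ connected, which should make the exchange sequence easier to construct.

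The crux --- and the step I expect to be the main obstacle --- is the purely combinatorial claim that any two multisets of tuples in $\<m\>^k$ with the same degree vector are connected by exchange moves each involving at most $2d$ integers. The budget $2d$ is ``two tuples' worth'' of integers: the intended mechanism is to pick a target tuple $q$ of $\gamma^-$ and assemble it inside the running multiset through a short chain of local exchanges, each re-routing the coordinates of $q$ using only the integers of a bounded number of tuples, after which $q$ can be cancelled against $\gamma^-$. The subtlety is that when $f$ has repeated variables a single productive exchange may have to act on more than two tuples at once --- for instance the cyclic relation $e_{(1,2)}+e_{(2,3)}+e_{(3,1)}-e_{(1,3)}-e_{(3,2)}-e_{(2,1)}$ for $f=y_1y_2^2$ involves three tuples on each side --- yet its integers overlap enough to remain within a set of size $\leq 2d$, and this confinement must be established in general. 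The case of squarefree $f$ is the model: after quotienting by the degree-one permutation relations $x_u-x_{u'}$ (equal underlying sets, at most $k$ integers) one is reduced to exchange-connectivity among $k$-subsets of $[m]$ via symmetric exchanges, each touching at most $2k=2d$ integers; the general weighted case calls for the analogous but more delicate analysis of the configurations $u\mapsto\prod_j t_{u_j}^{a_j}$, which is precisely what Algorithm~\ref{algorithmourthm} realizes constructively. The base case, the connected-component reduction, and the passage from lattices back to Laurent ideals are all routine; only the bounded-exchange lemma requires real effort.
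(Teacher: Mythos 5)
Your reduction of the statement to a lattice assertion is sound: since in a Laurent polynomial ring the ideal generated by the binomials $\{{\bf x}^{b_+}-{\bf x}^{b_-}: b\in B\}$ is the lattice ideal of $\spanZ B$, stabilization at $N=2d$ is indeed equivalent to $L_m=\spanZ\{\sigma\delta:\sigma\in\symm_m,\ \delta\in L_{2d}\}$ for all $m\geq 2d$, and this is consistent with how the paper passes between lattices and Laurent ideals (Lemma \ref{laurentLemma}). But from that point on your argument stops at exactly the place where the theorem lives. The ``bounded-exchange lemma'' --- that any two multisets of tuples in $\<m\>^k$ with the same degree vector are connected by exchanges each confined to at most $2d$ integers --- is not a technical step you may defer; at the lattice level it \emph{is} the statement that $L_m$ is generated over $\Z[\symm_m]$ by elements supported on $2d$ symbols, i.e.\ it is the theorem itself. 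You offer a mechanism (assemble a target tuple by local re-routings) and you correctly observe that for non-squarefree $f$ a productive exchange may need to involve more than two tuples, but you give no argument that the integers touched in such a step can always be confined to a set of size $2d$, and no induction or invariant that would make the assembly terminate. As written, the proposal is a correct reformulation plus an unproven conjecture, so there is a genuine gap.

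The paper closes precisely this gap by a different device that avoids proving exchange-connectivity directly. After reducing to $\gcd(\alpha)=1$, it enlarges $R_n$ to $\rtilde_n$, with one indeterminate for \emph{every} multisubset of $[n]$ of size $d$ with multiplicities at most $\max_i\alpha_i$ (the matrix $\matrB_n$ of (\ref{Bneqn})); the resulting toric ideal $\widetilde{I}_n$ is of Veronese type and is generated by quadratic binomials (Lemma \ref{quadgeneration}, via \cite[Theorem 14.2]{SturmfelsGBCP}). Each such quadric $x_{\atilde}x_{\btilde}-x_{\atilde'}x_{\btilde'}$ involves at most $2d$ distinct integers --- this is where the bound $2d$ actually comes from. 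The extra indeterminates are then eliminated by the equivariant $\K$-algebra map $\mu:\rtilde_n\to R_n^{\pm}$ of Proposition \ref{mu_construction}, which sends $x_{\atilde}$ to a Laurent monomial quotient in the original variables; its existence rests on the integer-programming description of $\spanZ(\matrA_n)$ in Lemma \ref{columnspan}. Applying $\mu$ to a quadratic generating set of $\widetilde{I}_n$ yields generators of $I_n^{\pm}$ lying in $\<\symm_n I_{2d}^{\pm}\>_{R_n^{\pm}}$. If you want to salvage your route, you would either have to prove your bounded-exchange lemma from scratch (for which no elementary argument is known to me when $f$ is not squarefree) or import this Veronese lift, which is exactly the missing idea.
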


\begin{ex}\label{ex:2ndThm}
Let $k=2$ and suppose that $f=y_1^2y_2\in\K[y_1,y_2]$. For every $n\geq 2$, the map $\phi_n$ is defined by $\phi_n(x_{(i,j)})=t_i^2t_j$ for $(i,j)\in\< n\>^2$. Theorem \ref{mainthm2} asserts that if $N=2\cdot \deg(f)=6$, then the generators of $I_6^\pm$ form a generating set for the whole chain $I_\circ^\pm$ up to the action of the symmetric group $\symm_m$; that is, for all $m\geq 6$, we have $\<\symm_m  I_6^\pm\>_{R_m}=I_m^\pm$. For instance, when $m\geq 9$, we observe that $x_{(3,9)} x_{(7,9)}-x_{(3,7)} x_{(9,7)}\in I_m$ (thus, in $I_m^\pm$) since 
$$\phi_n(x_{(3,9)} x_{(7,9)}) = t_3^2 t_7^2 t_9^2 = \phi_n(x_{(3,7)} x_{(9,7)}).$$
Thus, by Theorem \ref{mainthm2}, there exist permutations $\sigma_1,\ldots,\sigma_r\in \symm_m$, elements $g_1,\ldots,g_r\in I_6^\pm$, and polynomials $h_1,\ldots, h_r\in R_m^\pm$, such that $x_{(3,9)} x_{(7,9)}-x_{(3,7)} x_{(9,7)} = h_1 \sigma_1 g_1 + \cdots + h_r \sigma_r g_r$. Theorem \ref{ouralgthm} below, provides a method for finding such polynomial combinations in general; in this case, one possibility is $r=1$, $h_1=1$, $\sigma_1 = (1\, 3\, 9\, 2\, 7)\in\symm_m$, and $g_1 = x_{(1,3)} x_{(2,3)}-x_{(1,2)} x_{(3,2)}\in I_6^\pm$. For more details on this example (including an explicit set of generators for $I_6^{\pm}$), see Section \ref{stablaurchainmon}.
\qed
\end{ex}

\begin{rem}
Rather surprisingly, it is still an open question whether the (non-Laurent) toric chain $I_{\circ}$ stabilizes in Example \ref{ex:2ndThm}, and more generally, for any monomial $f$ that is not square-free.  Section \ref{openprobconj} discusses more open problems of this nature.
\end{rem}

In the development of the proof of Theorem \ref{mainthm2}, we also found an algorithm for computing these generators.

\begin{thm}[Algorithm \ref{algorithmourthm}]\label{ouralgthm}
There is an effective algorithm to compute a finite set of generators for the Laurent chains $I_\circ^\pm$  in Theorem \ref{mainthm2}.
\end{thm}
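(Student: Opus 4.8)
The plan is to exhibit Algorithm \ref{algorithmourthm} explicitly and then verify that it halts and returns a correct set of generators in the sense of Definition \ref{def:stabilization}. The essential work has already been carried out in Theorem \ref{mainthm2}: because $N = 2d$ is a stabilization bound, $\<\symm_m I_N^\pm\>_{R_m^\pm} = I_m^\pm$ for every $m \geq N$, so it is enough for the algorithm to output a finite generating set of the \emph{single} Laurent ideal $I_N^\pm \subseteq R_N^\pm$. The remaining task is therefore to turn ``compute $I_N^\pm$'' into an explicit finite procedure and to check its output against Theorem \ref{mainthm2}.

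First I would record that $I_N = \ker \phi_N$ is a lattice ideal with an explicitly given lattice. Writing $f = y_1^{d_1} \cdots y_k^{d_k}$ with $d_1 + \cdots + d_k = d$, let $A_N$ be the integer matrix with rows indexed by $[N]$ and columns indexed by $\<N\>^k$ whose column at $x_{(u_1,\ldots,u_k)}$ is $d_1 e_{u_1} + \cdots + d_k e_{u_k} \in \Z^N$ (standard basis vectors, counted with multiplicity); then $I_N$ is the lattice ideal of $L_N := \ker_\Z A_N$. The point that makes a \emph{Laurent} generating set cheap to produce is that $I_{L_N}^\pm$ is generated over $R_N^\pm$ by the binomials $x^{b^+} - x^{b^-}$ as $b = b^+ - b^-$ ranges over any $\Z$-basis of $L_N$: monomials are units in $R_N^\pm$, so each such binomial equals a unit times $x^b - 1$, and a mere $\Z$-basis of $L_N$ (no Markov basis, unlike the situation for the non-Laurent toric ideal) already cuts out $I_{L_N}^\pm$.

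So the algorithm is: set $N := 2d$ (we may assume $N \geq k$); build the finite matrix $A_N$; compute a $\Z$-basis $\{b_1, \ldots, b_r\}$ of $\ker_\Z A_N$, e.g.\ via Hermite normal form; and return $G := \{\, x^{b_i^+} - x^{b_i^-} : 1 \leq i \leq r \,\}$. Termination is immediate, since $\<N\>^k$ is finite and the Hermite normal form computation terminates. Correctness follows by concatenation: $\<G\>_{R_N^\pm} = I_{L_N}^\pm = I_N^\pm$ by the previous paragraph, and hence $\<\symm_m G\>_{R_m^\pm} = \<\symm_m I_N^\pm\>_{R_m^\pm} = I_m^\pm$ for all $m \geq N$ by Theorem \ref{mainthm2}, so $G$ is a finite set of generators for the chain $I_\circ^\pm$. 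I expect the only genuine obstacle here to be bookkeeping rather than mathematics: making sure ``generators of a Laurent lattice ideal'' is read as a $\Z$-lattice basis (an honestly effective, low-complexity object, in contrast with the Gr\"obner/Markov data the non-Laurent toric ideal would require --- which is exactly where localization pays off), and tracking the index set $\<N\>^k$ and the embedding $R_N^\pm \hookrightarrow R_m^\pm$ carefully enough that the translates $\symm_m G$ match what Theorem \ref{mainthm2} supplies. A concrete run of the procedure (the generators of $I_6^\pm$ for $f = y_1^2 y_2$) is worked out in Example \ref{ex:2ndThm} and Section \ref{stablaurchainmon}.
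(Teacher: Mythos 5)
Your proposal is correct, but it proves the theorem by a genuinely different route than the paper. You treat Theorem \ref{mainthm2} as a black box: since $N=2d$ is a stabilization bound, any effective way of producing a finite generating set of the single Laurent ideal $I_{2d}^\pm$ suffices, and you get one from a $\Z$-basis of $\ker_\Z A_{2d}$ (computable by Hermite normal form), using the fact that in the Laurent ring a lattice basis already generates the Laurent lattice ideal --- this last fact is exactly the content of the paper's Lemma \ref{laurentLemma}, and your overall strategy mirrors the paper's own hand computation in Section \ref{sec:themainexample}, where a description of $\ker_\Z(\matrA_n)$ for $f=y_1^2y_2$ yielded a four-element generating set. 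The paper's Algorithm \ref{algorithmourthm}, by contrast, re-runs the constructive content of the proof of Theorem \ref{mainthm2}: it computes a Gr\"obner basis of the auxiliary Veronese-type toric ideal $I_{\matrB_{2d}}$, keeps the quadratic binomials (Lemma \ref{quadgeneration}), replaces the extra indeterminates by the monomial quotients $\mu(x_w)$ built from the integer decomposition of Lemma \ref{columnspan} (Proposition \ref{mu_construction}), and then prunes redundant generators up to the $\symm$-action. The trade-off: your method is computationally lighter (integer linear algebra instead of a toric Gr\"obner basis) and is logically cleaner given Theorem \ref{mainthm2}, while the paper's algorithm is the one tied to its proof of the bound $N=2d$, produces generators coming from a structured quadratic generating set of $\widetilde{I}_{2d}$ with an explicit orbit-reduction step, and is what is implemented in the accompanying software; neither method claims minimality of the output. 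One small point to make explicit in your write-up is the identification $I_N^\pm=\ker\psi_N$ (stated in the paper via diagram (\ref{com_diagram})), which is what licenses replacing the localized ideal by the Laurent kernel before invoking the lattice-basis argument.
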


The first step of the algorithm in Theorem \ref{ouralgthm} is to embed a toric ideal into a Veronese ideal in a larger polynomial ring and use the fact that the latter is generated by quadratic binomials. A second procedure replaces the extra indeterminates of the larger ring by special quotients of monomials involving only indeterminates of the original polynomial ring.  In turn, this reduces to an integer programming problem, which we solve explicitly.  The following example illustrates some of the main ideas involved.

\begin{ex}
(Continuing Example \ref{ex:2ndThm}). 
Consider the polynomial rings $R_n^\prime :=R_n[x_{(1,2,3)}]$ in an extra indeterminate $x_{(1,2,3)}$, and extend $\phi_n$ to a map $\phi_n^\prime: R_n^\prime\to T_n$ by setting $\phi_n^\prime(x_{(1,2,3)})=t_1t_2t_3$. Notice that if $h \in I_n$, then $h \in \ker\phi_n^\prime$, and also that 
$$
\phi_n^\prime(x_{(1,2,3)}^2) = \phi_n^\prime(x_{(1,3)} x_{(2,3)}) =  \phi_n^\prime(x_{(1,2)} x_{(3,2)}) = t_1^2 t_2^2 t_3^2.
$$
Thus, $p_1:=x_{(1,3)} x_{(2,3)} - x_{(1,2,3)}^2$ and $p_2:=x_{(1,2)} x_{(3,2)} -x_{(1,2,3)}^2$ lie in $\ker\phi_n^\prime$ (for $n \geq 3$).  Consider any generating set for $\ker\phi_n^\prime$ which contains $p_1,p_2$; then, each $g \in I_n$ can be expressed in terms of these generators. For instance,
\[ g = x_{(1,3)} x_{(2,3)}-x_{(1,2)} x_{(3,2)} = (x_{(1,3)} x_{(2,3)} - x_{(1,2,3)}^2) - (x_{(1,2)} x_{(3,2)} -x_{(1,2,3)}^2)\in \ker\phi_n^\prime.\]
Next, notice that 
\begin{equation}\label{example_subst}
\phi_n^\prime(x_{(1,2,3)}) = t_1t_2t_3 = \frac{\phi_n(x_{(1,2)}) \phi_n(x_{(3,1)})}{\phi_n(x_{(1,3)})} = \phi_n \left( \frac{x_{(1,2)}x_{(3,1)}}{x_{(1,3)}} \right).
\end{equation}
Therefore, if we replace $x_{(1,2,3)}$ by $\frac{x_{(1,2)} x_{(3,1)}}{x_{(1,3)}} $ in the two generators $p_1$ and $p_2$ above, we obtain two elements $\hat{p}_1, \, \hat{p}_2 \in I_n^\pm$ which also generate $g$.  More generally, if we can find a finite set of generators for the chain of ideals $\ker \phi_n'$, then we would have generators for the chain of ideals $I_n$ up to monomial inversion.

Identity (\ref{example_subst}) was discovered by solving the following integer programming problem (described more fully in Example \ref{integer combination}). The exponent vector of $t_1t_2 t_3$ is $u=(1,1,1,0,\ldots,0)\in\Z^n$ and for any $(i,j)\in\<n\>^2$, the exponent vector of $\phi_n(x_{(i,j)}) = t_i^2 t_j$ is \[w_{i,j}:=(0,\ldots, 0, 2,0,\ldots,0,1,0,\ldots,0)\in\Z^n,\] in which the nonzero components of  $w_{i,j}$ are the $i$th and $j$th with respective values 2 and 1. To find an expression such as (\ref{example_subst}), we needed to write $u$ as an integer linear combination of the vectors $w_{i,j}$ (this is done in general in Lemma \ref{columnspan}).  
\qed
\end{ex}

The most recent finiteness result along the lines of Theorems \ref{laurentlatticethm} and \ref{mainthm2} can be found in the work of Draisma and Kuttler~\cite{DraismaKuttler11}.  There, they prove set-theoretically that for any fixed positive integer $r$, there exists $d\in\N$ such that for all $p\in \N$, the set of $p$-tensors (elements of $V_1\otimes\cdots\otimes V_p$, where each $V_i$ is a finite dimensional $\K$-vector space) of border rank at most $r$ are defined by the vanishing of finitely many polynomials of degree at most $d$ (when $r=1$ these polynomials define toric ideals).  The authors of \cite{DraismaKuttler11} also realized the ideals defined by these polynomial equations as invariant chains under the action of the semi-direct product of $\symm_p$ with the general linear group $GL(V)^p$, and they conjectured~\cite[Conjecture 7.3]{DraismaKuttler11} stabilization. 
The case $r=1$ was proved by Snowden in \cite{Snowden10}.
The results of \cite{DraismaKuttler11}  extend those of Landsberg and Manivel in ~\cite{LM2004}, where they show set-theoretically that $p$-tensors of rank at most $2$ are defined by polynomials of degree $3$ (the $(3\times 3)$-subdeterminants of all the \emph{flattenings}) regardless of the dimension of the tensor. We note that an ideal-theoretic proof of this last fact was recently discovered by Raicu~\cite{Raicu2010}.


While the general problem of deciding which chains of ideals stabilize seems difficult, it is possible that every invariant chain of (non-Laurent) lattice or toric ideals stabilizes, and Theorem \ref{laurentlatticethm} provides evidence. However, even for the special case studied here of a toric chain induced by a monomial, this is not known \cite[Conjecture 5.10]{AH07} and appears to be a difficult problem (although it is true for square-free monomials \cite[Theorem 5.7]{AH07}). We pose the following open question.

\begin{prob}
Does every invariant chain of lattice ideals (resp. toric ideals) stabilize?
\end{prob}

The outline of this paper is as follows.  In Section \ref{prelim}, we introduce the order theory required for proving Laurent lattice stabilization (Theorem \ref{laurentlatticethm}) in Section \ref{genLaurentThm}.  Next, Section \ref{stablaurchainmon} contains a proof of Theorem \ref{mainthm2} using some ideas from toric algebra and integer programming.  Also found there is another approach to constructing Laurent chain generators in Theorem \ref{mainthm2} (e.g., the generators alluded to in Example \ref{ex:2ndThm}) which can produce smaller generating sets than those found by Algorithm \ref{algorithmourthm}.  Section \ref{algorithms} contains a discussion of Theorem \ref{ouralgthm} and Algorithm \ref{algorithmourthm}.  Finally, in Section \ref{openprobconj} we present some open problems and conjectures arising from our computational investigations.

\section{Nice Orderings}\label{prelim}

In this section, we explain the ideas from the theory of partial orderings that are needed to prove Theorem \ref{laurentlatticethm}.  A \textit{well-partial-ordering} $\leq$ on a set $S$ is a partial order such that (i) there are no infinite antichains and (ii) there are no infinite strictly decreasing sequences. One can check that this naturally generalizes the notion of ``well-ordering" to orders $\leq$ which are not total.
 
Let $\symm$ be a group acting on a set $S$ (a \textit{$\symm$-set}), and suppose that $\leq$ is a well-ordering of $S$.  For $s\in S$ and $\sigma \in \symm$, let $s_< := \{t\in S:t<s\}$ and $\sigma  s_< := \{\sigma t: t < s\}$.  We define a partial ordering $\preceq$ on $S$ as follows:
\begin{equation}\label{preceqdef}
s\preceq t\quad
:\Longleftrightarrow\quad\text{$s\leq t$ and there exists $\sigma \in \symm$ such that $\sigma s=t$ and $\sigma  s_< \subseteq t_<$.}
\end{equation}
A group element $\sigma \in \symm$ verifying (\ref{preceqdef}) is called a \textit{witness} of the relation $s\preceq t$.  An example of this construction can be found in Example \ref{orders example}. 

Call the well-ordering $\leq$ of $S$ a \emph{nice} ordering if $\preceq$ is a well-partial-ordering.   Many naturally occurring $\symm$-sets have nice orderings.  For instance, the set of $k$-element subsets of $\P$ with the natural action of $\symm = \symm_{\P}$ has a nice ordering \cite{Ahlbrandt-Ziegler}.  Camina and Evans studied the ring-theoretic consequences of nice orderings in \cite{Camina-Evans}, inspired by the ideas in \cite{Ahlbrandt-Ziegler}.  They showed that if $S$ has a nice ordering, then the $\K[\symm]$-module $\K S$ is Noetherian over the group ring $\K[\symm]$ for any field $\K$  \cite[Theorem~2.4]{Camina-Evans}.  We shall prove that $[\P]^k$ also has a nice ordering; however, our application (Theorem \ref{laurentlatticethm}) requires a more refined version of this statement. This refinement is given by Theorem \ref{mainNoethThm} below.  Before proving this theorem, we first define a nice ordering of $[\P]^k$ with special properties.

Consider $\symm_{\P}$ acting on $[\P]^k$ as described in (\ref{SymmPk}).  We first give a total well-ordering $\leq_{dlex}$ on $[\P]^k$ as follows.  Given $w=(w_1,\dots,w_k)\in[\P]^k$, set $|w|_\infty:=\max\{w_1,\dots,w_k\}$.  Define the \textit{degree lexicographic} total ordering on $[ \P ]^k$ by 
\begin{equation}\label{niceorderingPk}
v\leq_{dlex} w \quad:\Longleftrightarrow\quad \text{$|v|_\infty<|w|_\infty$ or $|v|_\infty=|w|_\infty$ and $v <_{lex}w$.}
\end{equation}
Here, $<_{lex}$ is the natural \textit{lexicographic} ordering of elements of $[\P]^k$ given by $(u_1,\ldots,u_k) <_{lex} (w_1,\ldots,w_k) :\Longleftrightarrow u_1 = w_1, \ldots,u_{r-1} = w_{r-1}$ and $u_{r} < w_{r}$ for some $r \in [k]$.

Notice that for every $w\in[\P]^k$ there are only finitely many $v\in[\P]^k$ such that $v<_{dlex}w$; hence, $\leq_{dlex}$ is a well-ordering of $[\P]^k$.  The well-ordering $\leq_{dlex}$ induces the partial order $\preceq_{dlex}$ as in (\ref{preceqdef}).

\begin{ex}\label{orders example}
With the above definition of $\leq_{dlex}$ for $[\P]^2$, we have $(2,3)\leq_{dlex} (2,4)$ and $(2,3)\leq_{dlex} (3,1)$. Moreover, when $[\P]^2$ is equipped with the action of $\symm_\P$, we claim that $(2,3)\preceq_{dlex} (2,4)$.  Represent the elements of $\symm_\P$ in cyclic notation so that $(3\,4)\cdot (2,3) = (2,4)$, and observe that
\begin{eqnarray*}
(3\,4)\cdot(2,3)_{<_{dlex}}&=&(3\,4)\cdot\{(1,3), (2,1), (2,2), (1,2) \}\\
&=&\{(1,4), (2,1), (2,2), (1,2)\}\\
&\subseteq&\{(1,1), (1,4),  (3,1), (1,3), (2,3), (3,2),(3,3), (2,1),(2,2), (1,2)\}\\
&=&(2,4)_{<_{dlex}} .
\end{eqnarray*}
On the other hand, we have $(2,3)\npreceq_{dlex} (3,1)$.  To see this, let $\sigma\in\symm_\P$ be such that $\sigma\cdot(2,3)=(3,1)$; thus  $\sigma(1)\geq 2$. Notice that for $(2,1)\in(2,3)_{<_{dlex}}$, we have $\sigma\cdot(2,1) = (\sigma(2),\sigma(1)) = (3,\sigma(1))$.  It follows that $(3,2)\leq_{dlex}\sigma\cdot(2,1)$.   Since $$(3,1)_{<_{dlex}} =\{(2,3), (1,3), (2,1), (1,2),(1,1),(2,2) \},$$
we see that $(3,2)\notin (3,1)_{<_{dlex}}$; therefore, $\sigma\cdot(2,3)_{<_{dlex}} \nsubseteq (3,1)_{<_{dlex}}$. 
\qed
\end{ex}

Although not needed for our main result, a solution to the following problem would likely be useful in converting the methods of this section into computational tools.  

\begin{prob}
Give a computationally efficient criteria for determining if $u \preceq_{dlex} v$ for $u,v \in [\P]^k$.
\end{prob}

One may also ask the following open-ended problem.
 
\begin{prob}
Let $S$ be an $\symm$-set.  Characterize those total well-orderings $\leq$ which are nice.
\end{prob}

We are now in position to show that the ordering $\leq_{dlex}$ is nice.

\begin{prop}\label{niceOrdProp}
The ordering $\preceq_{dlex}$ of $[\P]^k$ is a well-partial-ordering.
\end{prop}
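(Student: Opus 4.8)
The plan is to prove that $\preceq_{dlex}$ has no infinite antichains and no infinite strictly decreasing sequences. The second condition is immediate: if $v \preceq_{dlex} w$ then in particular $v \leq_{dlex} w$, and $\leq_{dlex}$ is a well-ordering, so there can be no infinite strictly $\preceq_{dlex}$-decreasing sequence. Thus the entire content is the no-infinite-antichain condition, and I would attack it via the standard Nash-Williams minimal-bad-sequence technique (equivalently, one can phrase it through Higman's lemma / Kruskal-type embedding arguments, but the minimal-bad-sequence argument is cleanest here).

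First I would set up the contradiction: suppose there is an infinite sequence $w^{(1)}, w^{(2)}, \dots$ in $[\P]^k$ that is \emph{bad}, meaning $w^{(i)} \npreceq_{dlex} w^{(j)}$ for all $i < j$. Using that $\leq_{dlex}$ is a well-ordering, choose such a bad sequence that is lexicographically minimal with respect to $\leq_{dlex}$ in the usual Nash-Williams sense: pick $w^{(1)}$ to be $\leq_{dlex}$-minimal among first terms of bad sequences, then $w^{(2)}$ minimal among second terms of bad sequences starting with $w^{(1)}$, and so on. The key structural observation is how an element $w \in [\P]^k$ decomposes: writing $|w|_\infty = m$, the coordinate positions at which $w$ attains the value $m$ form a nonempty subset of $[k]$, and removing/recording the ``top'' structure leaves something smaller. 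Concretely, I would encode each $w$ by the pair consisting of (a) the set (or word) of positions where the maximum $m$ is attained, which lives in a finite set, and (b) the tuple $w'$ obtained by deleting those top coordinates, which lies in $[\P]^{k'}$ for some $k' < k$ (after re-indexing the remaining values, which are all $< m$ — or one can keep them as is and argue by a secondary induction on $|w|_\infty$).

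The main obstacle — and the step I expect to require the most care — is showing that a $\preceq$-comparison of the ``reduced'' pieces $w'^{(i)} \preceq w'^{(j)}$ can be lifted back to a comparison $w^{(i)} \preceq_{dlex} w^{(j)}$ of the original tuples, producing the contradiction. This is where the specific design of $\leq_{dlex}$ matters: because $\leq_{dlex}$ orders first by $|\cdot|_\infty$ and only then lexicographically, a witness $\sigma$ for $w'^{(i)} \preceq w'^{(j)}$ (built on the smaller $\symm$-set and carrying the condition $\sigma (w'^{(i)})_< \subseteq (w'^{(j)})_<$) can be extended by a permutation sending the maximal value of $w^{(i)}$ to that of $w^{(j)}$; one must check that this extended $\sigma$ still satisfies $\sigma w^{(i)} = w^{(j)}$ and, crucially, $\sigma (w^{(i)})_{<_{dlex}} \subseteq (w^{(j)})_{<_{dlex}}$. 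The inclusion of the ``down-sets'' is the delicate verification, since elements below $w^{(i)}$ split into those with strictly smaller $\infty$-norm (handled by the norm-monotonicity of the induced map, after checking the positions of the top coordinates behave correctly) and those with equal $\infty$-norm but lexicographically smaller (handled by the witness property on the reduced piece together with the fixed placement of the maximal coordinates). I would organize the induction on $k$, with Proposition~\ref{niceOrdProp} for $k-1$ (and the trivial base case $k=1$, where $\preceq_{dlex}$ refines $\leq_{dlex}$ and $[\P]^1 = \P$ with its usual well-order is trivially a well-partial-order) feeding the minimal-bad-sequence argument at level $k$; Higman's lemma on the finite alphabet of ``top-position patterns'' packages the combinatorial bookkeeping. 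Once the lifting step is established, the minimality of the chosen bad sequence is contradicted in the standard way, completing the proof.
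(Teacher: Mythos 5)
There is a genuine gap, and it sits exactly where you flagged ``the most care'' would be needed: the lifting step is not just delicate, it is unavailable with the tools your setup provides. Your induction hypothesis only says that $\preceq_{dlex}$ on $[\P]^{k'}$ is a well-partial-order, so all you get from it is \emph{some} witness $\tau$ for $w'^{(i)} \preceq_{dlex} w'^{(j)}$, and such a witness controls only the values up to the maximum of the \emph{reduced} tuple. Values strictly between that reduced maximum and $|w^{(i)}|_\infty$ (which need not occur in $w^{(i)}$ at all) are completely unconstrained by $\tau$, yet they appear in down-set elements: e.g.\ for $w^{(i)}=(5,1)$ the tuple $u=(4,3)$ satisfies $u <_{dlex} w^{(i)}$, and an arbitrary extension of a witness for $(1)\preceq_{dlex}(2)$ may send $3,4$ to huge values, breaking $\sigma\, (w^{(i)})_{<_{dlex}} \subseteq (w^{(j)})_{<_{dlex}}$. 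So the ``norm-monotonicity of the induced map'' you invoke does not exist at that point; to get it you must build a fresh witness that maps the whole initial segment $[\,|w^{(i)}|_\infty\,]$ into $[\,|w^{(j)}|_\infty\,]$ compatibly with the forced values, which requires a separate (Hall-type/counting) argument and, in particular, the inequality $|w^{(i)}|_\infty \le |w^{(j)}|_\infty$ together with $w^{(i)} \leq_{dlex} w^{(j)}$ --- neither of which your bookkeeping supplies: ``same top-position pattern plus comparable reduced pieces'' is consistent with $w^{(i)}=(9,1)$, $w^{(j)}=(5,2)$, where $w^{(i)} \not\leq_{dlex} w^{(j)}$ and hence no lifting is possible. (A smaller structural point: once you induct on $k$, the Nash--Williams minimal-bad-sequence apparatus does no work --- pigeonhole on the finitely many patterns plus the inductive hypothesis already extracts the comparable pair --- so the machinery is misdirected; the real content is entirely in the unproved lifting lemma.)

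Compare this with the paper's proof, which is designed precisely to avoid the problem above. There one encodes $w$ by a word $w^*$ of length $|w|_\infty$ over a finite alphabet, recording for each value $1,\dots,|w|_\infty$ which positions of $w$ carry it, and applies Higman's Lemma to the product of $\leq_{dlex}$ with $\leq_H$ (note that $\leq_{dlex}$ itself is carried along in the product, which is what guarantees the needed $v\leq_{dlex} w$). A Higman comparison $v^* \leq_H w^*$ hands you a \emph{strictly increasing} map $\varphi\colon [\,|v|_\infty\,]\to[\,|w|_\infty\,]$ defined on the entire initial segment of values; extending $\varphi$ to a permutation yields a witness whose global monotonicity and boundedness make both cases of the down-set inclusion (smaller $\infty$-norm, and equal norm with smaller lex order) immediate. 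That globally monotone witness is exactly the structured object your inductive argument would need but cannot extract from the bare well-partial-order statement in dimension $k'<k$; repairing your approach would amount to strengthening the induction to produce such witnesses, i.e.\ to re-deriving the paper's Higman-type encoding.
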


The proof of this proposition uses a special case of a result of Higman \cite{Hig52, NW1}, which we state in the following lemma.
Recall that a \textit{strictly increasing} map $\varphi: [m] \to [n]$ satisfies $\varphi(i) < \varphi(i+1)$ for all $i$.

\begin{lem}[\cite{Hig52}]\label{HigLemma}
Let $\Sigma$ be a finite set. The following ordering $\leq_H$ on the set $\Sigma^*$  of all finite sequences of
elements of $\Sigma$ is a well-partial-ordering:
$$(x_1,\dots,x_m) \leq_H (y_1,\dots,y_n) \quad :\Longleftrightarrow \quad
\begin{cases}
&\text{\parbox{160pt}{ $\exists\, \, \varphi\colon
[m]\to[n]$ such that $\varphi$ is strictly increasing and $x_i = y_{\varphi(i)}$ for
all $i\in[m]$}}\end{cases}
$$
\end{lem}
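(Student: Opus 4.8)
\textbf{Plan for proving Higman's Lemma (Lemma \ref{HigLemma}).}

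The plan is to use the classical minimal-bad-sequence argument due to Nash-Williams, which gives a clean non-constructive proof. Suppose, for contradiction, that $\leq_H$ is not a well-partial-ordering on $\Sigma^*$. First I would dispense with condition (ii): there are no infinite strictly decreasing sequences because the length $m$ of a sequence in $\Sigma^*$ is a non-negative integer and $(x_1,\dots,x_m) <_H (y_1,\dots,y_n)$ forces $m \leq n$, with the length strictly dropping only finitely often; more carefully, if $(x_1,\dots,x_m) \leq_H (y_1,\dots,y_n)$ and $m = n$ then the witnessing $\varphi$ is the identity and the sequences are equal, so a strictly decreasing chain has strictly decreasing lengths and hence is finite. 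Therefore the failure must be the existence of an infinite \emph{bad sequence}: a sequence $w^{(1)}, w^{(2)}, \dots$ of elements of $\Sigma^*$ with $w^{(i)} \not\leq_H w^{(j)}$ for all $i < j$.

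Next I would construct a \emph{minimal} bad sequence by recursion. Having chosen $w^{(1)}, \dots, w^{(i-1)}$ so that they can be extended to a bad sequence, pick $w^{(i)}$ to be a sequence of \emph{minimal length} among all $w \in \Sigma^*$ such that $w^{(1)}, \dots, w^{(i-1)}, w$ can be extended to an infinite bad sequence (this uses the well-ordering of $\N$ by lengths, and at the base step uses that some bad sequence exists). The resulting sequence $(w^{(i)})_{i\geq 1}$ is itself bad. Each $w^{(i)}$ is nonempty (the empty sequence embeds into everything, so it cannot appear in a bad sequence beyond possibly position $1$, and then only alone — handle this trivial case separately), so write $w^{(i)} = (a_i) \cdot v^{(i)}$, where $a_i \in \Sigma$ is the first letter and $v^{(i)} \in \Sigma^*$ is the tail, which is strictly shorter than $w^{(i)}$.

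The heart of the argument: since $\Sigma$ is finite, by the pigeonhole principle there is an infinite set of indices $i_1 < i_2 < \cdots$ on which $a_{i_1} = a_{i_2} = \cdots =: a$. Now consider the sequence
\[
w^{(1)}, w^{(2)}, \dots, w^{(i_1 - 1)}, v^{(i_1)}, v^{(i_2)}, v^{(i_3)}, \dots.
\]
I claim this is a bad sequence. Among $w^{(1)}, \dots, w^{(i_1-1)}$ badness is inherited. No $w^{(j)}$ with $j < i_1$ embeds into any $v^{(i_\ell)}$: if it did, then since $v^{(i_\ell)} \leq_H w^{(i_\ell)}$ (drop the first letter, i.e.\ compose with the shift embedding), we would get $w^{(j)} \leq_H w^{(i_\ell)}$, contradicting badness of $(w^{(i)})$. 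And no $v^{(i_\ell)}$ embeds into $v^{(i_m)}$ for $\ell < m$: if $v^{(i_\ell)} \leq_H v^{(i_m)}$ via a strictly increasing $\varphi$, then prepending $a$ to both sides — i.e.\ using the embedding $[\,|v^{(i_\ell)}|+1\,] \to [\,|v^{(i_m)}|+1\,]$ sending the first position to the first position and shifting $\varphi$ — shows $w^{(i_\ell)} = (a)\cdot v^{(i_\ell)} \leq_H (a) \cdot v^{(i_m)} = w^{(i_m)}$, again contradicting badness. So the displayed sequence is bad; but its $i_1$-th term is $v^{(i_1)}$, which is strictly shorter than $w^{(i_1)}$, contradicting the minimal choice of $w^{(i_1)}$.

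The main obstacle is purely conceptual rather than computational: one must set up the minimal-bad-sequence construction carefully (it is a recursion that at each stage invokes the existence of an extension to an infinite bad sequence, a mild use of dependent choice) and then verify the three badness conditions for the shuffled tail sequence. Once that bookkeeping is in place the contradiction is immediate. I would remark that this is exactly Nash-Williams' proof \cite{NW1} and that Higman's original argument \cite{Hig52} is equivalent; no properties of $\Sigma$ beyond finiteness (for the pigeonhole step) are used.
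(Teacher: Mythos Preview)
Your proof is correct: this is precisely the Nash-Williams minimal-bad-sequence argument, and all three badness checks for the spliced sequence are verified properly. Note, however, that the paper does not actually prove this lemma; it merely states it as a cited result from \cite{Hig52} and \cite{NW1} (the latter being exactly the Nash-Williams argument you reproduce), so there is no ``paper's own proof'' to compare against.
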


\begin{proof}[Proof of Proposition \ref{niceOrdProp}]
Let $\Sigma:=\{0,1,\dots,k\}$.   First order $[\P]^k \times \Sigma^*$ by the product of the orderings $\leq_{dlex}$ and $\leq_H$ on $[\P]^k$ and $\Sigma^*$, respectively. Then $[\P]^k\times \Sigma^*$ is well-partial-ordered, by Higman's Lemma (the product ordering of two well-partial-orderings is a well-partial ordering). For  $w=(w_1,\dots,w_k)\in[\P]^k$, set $n:=|w|_\infty$; also, let $w^*:=(w_1^*,\dots,w_n^*)\in\Sigma^*$ be given by  
$$
w_i^* := \sum_{w_j=i} j \ \ \ \ \text{for $i=1,\dots,n$}
.$$
To prove that $\preceq_{dlex}$ is a well-partial ordering on $[\P]^k$, it suffices to show that the map $w\mapsto (w,w^*)\colon [\P]^k\to [\P]^k\times\Sigma^*$ is an \textit{order-embedding}; that is, if $v\leq_{dlex} w$ and $v^*\leq_H w^*$, then $v\preceq_{dlex} w$ for all $v,w\in[\P]^k$. 

Suppose that $v\leq_{dlex} w$ and $v^*\leq_H w^*$, and let $m = |v|_\infty, \, n=|w|_\infty$; then there exists a function $\varphi\colon [m]\to[n]$ strictly increasing such that $v_i^* = w^*_{\varphi(i)}$ for all $1\leq i \leq m$. Since $\varphi$ is injective, it can be extended to a permutation $\sigma\in\symm_\P$. We claim that $v\preceq_{dlex} w$ via witness $\sigma$ so that $\sigma v=w$ and $\sigma v_{<_{dlex}} \subseteq w_{<_{dlex}}$.  

We first verify that $\sigma v=w$.  For $i \in \{1,\ldots,k\}$, let $l=v_i\leq m$. Notice that $w^*_{\sigma(l)}=v^*_l$, and so together with the definition of $v^*$, we have $$v^*_l =i+\sum_{\underset{j\neq i}{v_j=l}}j \Longrightarrow w^*_{\sigma(l)} = i+\sum_{\underset{j\neq i}{w_j=\sigma(l)}}j.$$  In particular, $w_i = \sigma(l)$; thus, $\sigma(v_i) = \sigma(l) = w_i$ and so $\sigma v = w$.

Now, suppose $u\leq_{dlex} v$. Since $\sigma$ and $\varphi$ agree on $\{v_1,\ldots,v_k\}$, it follows that $|\sigma u|_\infty \leq |\sigma v|_\infty = |w|_\infty = n$. To show $\sigma u \leq_{dlex} w$, it suffices to verify this when $|u|_\infty = |v|_\infty$, as the other case follows from $\varphi$ being strictly increasing. If $|u|_\infty= m = |v|_\infty$ and $u\leq_{dlex} v$, there is an $r\in[k]$ such that $u_1=v_1,\ldots,u_{r-1}=v_{r-1}$ and $u_r<v_r$. Therefore, $\sigma(u_1) = w_1,\ldots,\sigma(u_{r-1})=w_{r-1}$ and $\sigma(u_r)<\sigma(v_r)=w_r$ as $\sigma$ is strictly increasing. Thus, $\sigma u\leq_{dlex}w$ and so $\sigma v_{<_{dlex}} \subseteq w_{<_{dlex}}$ as required.
\end{proof}

\begin{rem}
Higman's lemma is also a key element in all known proofs of the finiteness result for $\symm_{\P}$-invariant ideals of $\mathbb C[x_1,x_2,\ldots]$ that was mentioned in the introduction.
\end{rem}

The following result also follows from the proof of Proposition \ref{niceOrdProp}.

\begin{cor}
The ordering $\preceq_{dlex}$ of $\<\P\>^k$ is a well-partial-ordering.
\end{cor}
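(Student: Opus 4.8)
The plan is to reuse the order-embedding constructed in the proof of Proposition \ref{niceOrdProp} almost verbatim, checking only that it behaves well when restricted to the subset $\<\P\>^k \subseteq [\P]^k$ of tuples with pairwise distinct entries. Recall the map $w \mapsto (w, w^*)$ from $[\P]^k$ to $[\P]^k \times \Sigma^*$, where $\Sigma = \{0,1,\dots,k\}$, $n = |w|_\infty$, and $w^*_i = \sum_{w_j = i} j$. When $w \in \<\P\>^k$, the entries $w_1,\dots,w_k$ are distinct, so each index $i \in \{1,\dots,n\}$ is hit by at most one $w_j$; hence each coordinate $w^*_i$ is either $0$ (if $i \notin \{w_1,\dots,w_k\}$) or equals the unique $j$ with $w_j = i$. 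The point is that the construction does not leave the subset: if $v, w \in \<\P\>^k$ and the proof of Proposition \ref{niceOrdProp} produces a witness $\sigma \in \symm_\P$ with $\sigma v = w$, then automatically $\sigma v \in \<\P\>^k$, so nothing is lost.

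First I would observe that the restriction of a well-partial-ordering to a subset is again a well-partial-ordering (an antichain, resp. a strictly decreasing sequence, in the subset is one in the ambient set); therefore the product ordering $\leq_{dlex} \times \leq_H$ restricted to $\<\P\>^k \times \Sigma^*$ is still a well-partial-ordering. Next I would note that the order-embedding property established in the proof of Proposition \ref{niceOrdProp} — namely that $v \leq_{dlex} w$ and $v^* \leq_H w^*$ together imply $v \preceq_{dlex} w$ — was proved for \emph{all} $v, w \in [\P]^k$, so in particular it holds for all $v, w \in \<\P\>^k$. The only subtlety is that the relation $\preceq_{dlex}$ on $\<\P\>^k$ should be understood with the same definition \eqref{preceqdef}, where the $s_<$ are taken inside $\<\P\>^k$ (not inside $[\P]^k$). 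One therefore has to check that the witness $\sigma$ furnished by the proof of Proposition \ref{niceOrdProp} still satisfies $\sigma\, v_{<_{dlex}} \subseteq w_{<_{dlex}}$ when these down-sets are computed within $\<\P\>^k$. But this is immediate: if $u \in \<\P\>^k$ with $u <_{dlex} v$, then the argument in the proof shows $\sigma u \leq_{dlex} w$, and $\sigma u$ has pairwise distinct entries (being a permutation applied to $u$), so $\sigma u \in \<\P\>^k$; combined with $\sigma u \neq \sigma v = w$ this gives $\sigma u \in w_{<_{dlex}}$ inside $\<\P\>^k$.

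Putting these pieces together: the map $w \mapsto (w, w^*)$ restricts to an order-embedding of $(\<\P\>^k, \preceq_{dlex})$ into the well-partial-ordered set $(\<\P\>^k \times \Sigma^*,\ \leq_{dlex} \times \leq_H)$, and a set which order-embeds into a well-partial-ordered set is itself well-partial-ordered. Hence $\preceq_{dlex}$ is a well-partial-ordering on $\<\P\>^k$.

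I do not expect any genuine obstacle here; the whole content is bookkeeping, confirming that passing to the sub-$\symm_\P$-set $\<\P\>^k$ neither breaks the embedding nor enlarges the down-sets in a way that would spoil the witness condition. If anything, the one place warranting a sentence of care is the remark that $\preceq_{dlex}$ on $\<\P\>^k$ uses down-sets internal to $\<\P\>^k$, and that the witness from Proposition \ref{niceOrdProp} survives this change — which, as noted above, follows because permutations preserve the property of having distinct coordinates.
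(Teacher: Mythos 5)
Your proposal is correct and follows essentially the same route as the paper, which simply observes that the proof of Proposition \ref{niceOrdProp} goes through on $\<\P\>^k$ because in that case $w_i^*$ is either $0$ or the unique $j$ with $w_j=i$. Your extra care about the down-sets being taken inside $\<\P\>^k$ and the witness preserving distinctness of coordinates is a harmless elaboration of the same argument.
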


\begin{proof}
The same proof as Proposition \ref{niceOrdProp} works just by noticing that, in this case, $w_i^*=j$ if $w_j=i$ or $0$ otherwise.
\end{proof}


Not all natural orders are nice as the following example demonstrates.

\begin{ex}
Define the \emph{reverse lexicographic ordering} $\leq_{revlex}$ on $[\P]^k$ as follows: 
\begin{multline}\label{revLex}
\qquad(u_1,\ldots,u_k)\leq_{revlex} (w_1,\ldots,w_k) \ :\Longleftrightarrow \ u_k=w_k, \ldots, u_{k-r}=w_{k-r} \\ \text{ and } \ w_{k-r-1} < u_{k-r-1} \, , \qquad
\end{multline}
for some $r\in[k]$.  In contrast to Proposition \ref{niceOrdProp}, the partial order $\leq_{revlex}$ is not nice. For instance, we have in $[\P]^2$ the following infinite strictly decreasing sequence:
\[\cdots \preceq_{revlex}(6,3)  \preceq_{revlex}(5,3) \preceq_{revlex}(4,3). \qed\]
\end{ex}

The nice ordering $\preceq_{dlex}$ is useful theoretically because of the following property.

\begin{lem}\label{holefixer}
Let $\preceq_{dlex}$ be the well-partial-ordering (\ref{preceqdef}) induced by the nice ordering $\leq_{dlex}$ of $[\P ]^k$. Also, let $s,\,t\in[\P]^k$ satisfy $s \preceq_{dlex} t$ and $|t|_{\infty} \leq M$ for some $M\in \{0,1,\ldots\}$. Then there is a $\sigma \in \symm_{M}$ witnessing $s \preceq_{dlex} t$.
\end{lem}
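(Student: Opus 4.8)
## Proof proposal for Lemma \ref{holefixer}

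The plan is to start from an arbitrary witness $\sigma\in\symm_{\P}$ of the relation $s\preceq_{dlex} t$ and to modify it, outside the ``active'' coordinates, into a witness that lies in $\symm_M$. By definition of $\preceq_{dlex}$, such a $\sigma$ satisfies $\sigma s = t$ and $\sigma s_{<_{dlex}}\subseteq t_{<_{dlex}}$. The key observation is that the only constraints imposed on $\sigma$ by these two conditions involve finitely many integers: the entries of $s$, the entries of $t$, and — because $\leq_{dlex}$ is a well-ordering in which every element has only finitely many predecessors — the finitely many tuples in $s_{<_{dlex}}$ together with their $\sigma$-images. All of the latter lie in $t_{<_{dlex}}$, hence have $|\cdot|_\infty$-value at most $|t|_\infty\le M$. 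So every integer that actually ``matters'' for checking the witness conditions is $\le M$.

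First I would make the set of relevant integers precise. Let $A=\{s_1,\dots,s_k\}\cup\bigcup_{u\in s_{<_{dlex}}}\{u_1,\dots,u_k\}$ be the set of coordinates appearing in $s$ or in one of its (finitely many) $\leq_{dlex}$-predecessors, and let $B=\sigma(A)$. Since $\sigma s = t$ and $\sigma s_{<_{dlex}}\subseteq t_{<_{dlex}}$, every element of $B$ occurs as a coordinate of $t$ or of some tuple in $t_{<_{dlex}}$; in either case it has absolute value at most $|t|_\infty\le M$, so $B\subseteq[M]$. Both $A$ and $B$ are finite with $|A|=|B|$, and $\sigma$ restricts to a bijection $A\to B$. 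Now define $\tau\in\symm_M$ to agree with $\sigma$ on $A$ (hence to map $A$ bijectively onto $B\subseteq[M]$) and to be any bijection of $[M]\setminus A$ onto $[M]\setminus B$ — these two sets have the same cardinality since both $A$ and $B$ sit inside $[M]$ — extended by the identity outside $[M]$. Note $A\subseteq[M]$ because each coordinate of $s$ or of a predecessor of $s$ is at most $|s|_\infty\le|t|_\infty\le M$ (using $s\le_{dlex} t$, which is part of $s\preceq_{dlex} t$).

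Finally I would check that $\tau$ is indeed a witness. Since $\tau$ agrees with $\sigma$ on $A\supseteq\{s_1,\dots,s_k\}$, we get $\tau s = \sigma s = t$. And for any $u\in s_{<_{dlex}}$, all coordinates of $u$ lie in $A$, so $\tau u=\sigma u\in t_{<_{dlex}}$; hence $\tau s_{<_{dlex}}\subseteq t_{<_{dlex}}$. Together with $s\le_{dlex} t$ (which holds by hypothesis and does not involve $\tau$ at all), this shows $\tau$ witnesses $s\preceq_{dlex} t$, and $\tau\in\symm_M$ as required.

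The only place requiring a little care — and the step I expect to be the main obstacle — is the bookkeeping that $s_{<_{dlex}}$ is finite and that $A$ and $B$ genuinely fit inside $[M]$; this is exactly where the ``degree'' part of the degree–lexicographic order is used (it guarantees finitely many predecessors and that membership in $t_{<_{dlex}}$ caps the $|\cdot|_\infty$-value by $|t|_\infty$). Everything else is a routine extension-of-a-partial-bijection argument. One should also remark that the cases $M=0$ (forcing $k$ arbitrary but no constraints, vacuous) and $|t|_\infty<M$ cause no trouble, since $B\subseteq[\,|t|_\infty\,]\subseteq[M]$ a fortiori.
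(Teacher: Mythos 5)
Your proof is correct and follows essentially the same route as the paper's: take an arbitrary witness in $\symm_{\P}$, note that all coordinates of $s$, of $t$, and of the relevant predecessor tuples are bounded by $|t|_\infty\leq M$, and then modify the witness outside this finite relevant set so that it becomes a permutation of $[M]$ while still satisfying $\sigma s=t$ and $\sigma s_{<_{dlex}}\subseteq t_{<_{dlex}}$. Your bookkeeping with the sets $A$ and $B$ is just a slightly more explicit version of the paper's extension step, so no further comment is needed.
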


\begin{proof}
Since $s\preceq_{dlex} t$, there exists $\tau\in\symm_\P$ such that $\tau s = t$ and $\tau  s_{<_{dlex}} \subseteq t_{<_{dlex}}$.
Let $M = |t|_\infty$.  Construct $\sigma\in\symm_M$ by setting $\sigma(i):= \tau(i)$ if $\tau(i)\leq M$ and then extending $\sigma$ to a permutation of $[M]$.  We claim that $\sigma s = t$ and $\sigma  s_{<_{dlex}}\subseteq t_{<_{dlex}}$. Since $s\leq_{dlex} t$, we have $|s|_\infty \leq |t|_\infty = M$.  Therefore writing $s = (s_1,\ldots,s_k)\in [\P]^k$, it follows that $\tau(s_i)\leq
M$ for each $i$; thus, $\sigma(s)=\tau(s)=t$. Notice also that $\tau(w)\leq M$ for all $w\in s_{<_{dlex}}$ because for all $w\in s_{<_{dlex}}$, we
have $w<_{dlex}s$ which implies $|w|_\infty \leq |s|_\infty\leq M$, and the same holds for all
$u\in t_{<_{dlex}}$. Therefore, $\tau(w_i)\leq |t|_\infty = M$ for all $w\in s_{<_{dlex}}$ and each $i=1,\ldots,k$. Thus, $\sigma(w) = \tau(w)$; therefore, $\sigma s_{<_{dlex}}\subseteq t_{<_{dlex}}$.
\end{proof}

If $A$ is a commutative ring and $S$ an $\symm$-set, we let $AS$ denote the free $A$-module with basis $S$. Also, let $A[\symm]$ be the (left) group ring (whose elements are formal linear combinations of elements in $\symm$ with coefficients in $A$~\cite{Lam2001}). The natural linear action of $A[\symm]$ on $AS$ makes it into an $A[\symm]$-module. The following is the refinement of the Noetherianity result from \cite{Camina-Evans} that we will use to prove Theorem \ref{laurentlatticethm}. 

\begin{thm}\label{mainNoethThm}
Let $A$ be a Noetherian commutative ring.  For every $A[\symm_{\P}]$-submodule \mbox{$B \subseteq A[\P]^k$}, there exists a finite 
set $G \subseteq B$ such that  \[ f \in B \cap A[m]^k \   \Longleftrightarrow \ \exists \, \sigma_1,\ldots,\sigma_{\ell} \in \symm_m; \, g_1,\ldots,g_{\ell} \in G; \, a_1,\ldots,a_{\ell} \in A   \text{ with } \ f = \sum_{i=1}^{\ell} a_i \sigma_i g_i.\]
\end{thm}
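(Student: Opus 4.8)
The plan is to reduce the statement to the module-Noetherianity of $AS$ over $A[\symm_\P]$ by combining two facts: the fact (Proposition \ref{niceOrdProp} / Camina--Evans \cite[Thm.~2.4]{Camina-Evans}) that $A[\P]^k$ is a Noetherian $A[\symm_\P]$-module because $\leq_{dlex}$ is nice, and the ``hole-fixing'' Lemma \ref{holefixer}, which lets us control the support of the permutations $\sigma_i$ by $M = \max$-coordinate appearing in the target. The abstract Noetherianity gives us a finite $G\subseteq B$ such that $B = A[\symm_\P]G$ (as an $A[\symm_\P]$-module), i.e., every $f\in B$ is a finite sum $\sum a_i\sigma_i g_i$ with $\sigma_i\in\symm_\P$. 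The content of the theorem beyond Camina--Evans is that when $f$ actually lies in the finitely-supported submodule $A[m]^k$, the witnessing permutations $\sigma_i$ can be chosen to lie in the finite group $\symm_m$.

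First I would recall/adapt the proof that $\leq_{dlex}$ being nice implies $A[\P]^k$ is $A[\symm_\P]$-Noetherian: given a submodule $B$, one builds $G$ greedily using the ``leading term'' of elements with respect to $\leq_{dlex}$. Concretely, to each nonzero $f\in A[\P]^k$ associate its $\leq_{dlex}$-largest basis element $\mathrm{lt}(f)=t\in[\P]^k$ occurring with nonzero coefficient, together with that coefficient $a\in A$. For each fixed "leading position" the set of (leading coefficient, leading term) data forms, after using that $\preceq_{dlex}$ is a well-partial-order and $A$ is Noetherian, a situation where finitely many elements $g_1,\dots,g_\ell$ of $B$ suffice to "reduce'' any $f\in B$: if $\mathrm{lt}(f)=t$ then there is some $g_i$ with $\mathrm{lt}(g_i)=s\preceq_{dlex}t$ and with leading coefficient generating (enough of) the appropriate ideal of $A$, so that for a witness $\sigma$ of $s\preceq_{dlex}t$ and suitable $a\in A$, the element $f - a\,\sigma g_i$ has strictly smaller leading term. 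Iterating and using that $\leq_{dlex}$ is a well-ordering, the process terminates and expresses $f$ as $\sum a_i\sigma_i g_i$. (One must be slightly careful to carry along finitely many coefficient-generators for $A$ at each stage; this is exactly the standard Gröbner-style argument and where Noetherianity of $A$ is used.)

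The key refinement, and where Lemma \ref{holefixer} enters, is the bookkeeping of supports. Suppose $f\in B\cap A[m]^k$, so every basis element in $f$ has all coordinates $\leq m$, hence $|\mathrm{lt}(f)|_\infty\le m$. In the reduction step we choose $g_i$ with $\mathrm{lt}(g_i)=s\preceq_{dlex}t=\mathrm{lt}(f)$ and a witnessing permutation $\sigma$; since $|t|_\infty\le m$, Lemma \ref{holefixer} lets us take $\sigma\in\symm_m$. After subtracting $a\,\sigma g_i$, the new leading term $t'$ satisfies $t'<_{dlex}t$, but a priori $\sigma g_i$ might introduce basis elements with coordinates exceeding $m$. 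However $\sigma\in\symm_m$ fixes $[m]$ setwise, and one checks that every basis element $w$ occurring in $g_i$ satisfies $w\le_{dlex}s\le_{dlex}t$ hence $|w|_\infty\le|t|_\infty\le m$ — wait, that is not automatic since $w$ could have the same $\infty$-norm as $s$; the point is rather that $|w|_\infty\le|s|_\infty\le|t|_\infty\le m$ because all basis elements of $g_i$ are $\le_{dlex}\mathrm{lt}(g_i)=s$ and $\le_{dlex}$ compares $\infty$-norm first. Therefore $\sigma w$ also has $\infty$-norm $\le m$, so $\sigma g_i\in A[m]^k$ and the remainder $f-a\sigma g_i$ stays in $B\cap A[m]^k$. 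Thus the entire reduction takes place inside $A[m]^k$ using only $\symm_m$, proving the $(\Leftarrow)$ direction's companion, i.e. the nontrivial $(\Rightarrow)$ direction with $\sigma_i\in\symm_m$; the reverse implication is immediate since $G\subseteq B$ and $B$ is an $A[\symm_\P]$-submodule containing $A[\symm_m]G$.

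The main obstacle I anticipate is the coefficient bookkeeping in the Noetherian-ring case: when $A$ is not a field, the "leading coefficient'' reduction must simultaneously track, for each $\preceq_{dlex}$-minimal leading-term pattern, a finite generating set of an ideal of $A$, and one needs the well-partial-order property of $\preceq_{dlex}$ together with Noetherianity of $A$ (a Dickson-type / Maclagan-style argument on the poset of pairs) to conclude that only finitely many $g_i$ are ever needed globally. Verifying that this finite $G$ can be chosen once and for all (independent of $m$), and that the support-control via Lemma \ref{holefixer} is compatible with this finite selection, is the technical heart; everything else is the routine Gröbner-style termination argument.
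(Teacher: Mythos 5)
Your proposal takes essentially the same route as the paper's proof: reduce with respect to the $\leq_{dlex}$-head, use the well-partial-order $\preceq_{dlex}$ together with Noetherianity of $A$ to extract one finite set $G$ (the paper takes $G=\{c_i b_j\}$ where the $c_i$ generate the ideal of leading coefficients and the $b_j$ realize generators of the final segment of heads), invoke Lemma \ref{holefixer} to place each witnessing permutation in $\symm_m$, and terminate the Gr\"obner-style reduction because $\leq_{dlex}$ is a well-ordering. The support control you worry about is automatic, since heads strictly decrease in $\leq_{dlex}$, which compares $|\cdot|_\infty$ first, so all later heads still have $\infty$-norm at most $m$; otherwise your sketch matches the paper's argument.
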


\begin{proof}
Let $\preceq_{dlex}$ be the well-partial-ordering of $[\P]^k$ (by Proposition \ref{niceOrdProp}) induced by the total well-order $\leq_{dlex}$ from (\ref{niceorderingPk}).  A \textit{final segment} of the partial order  $\preceq_{dlex}$ is a set $F \subseteq [\P]^k$ such that $u \in F$ and $u \preceq_{dlex} v$ implies that $v \in F$.  A well-known characterization of well-partial-orderings (see e.g. \cite{Kruskal72}) is that final segments are finitely generated.  That is, for every final segment $F$, there is a finite set $T \subseteq F$ such that $F = \{ v : \exists \, u \in T \text{ with } u \preceq_{dlex} v\}$.

If $f \in A[\P]^k$, we define the \emph{head} of $f$, Head$(f)$, to be the largest nonzero element in $[\P]^k$ (with respect to $\leq_{dlex}$) in the \textit{support} of $f$ (those elements of $[\P]^k$ occurring in $f$ with nonzero coefficient).

For the $A[\symm_\P]$-submodule $B$, let $J \subseteq A$ be the ideal generated by the (leading) coefficients of Head($f$) as $f$ ranges over elements of $B$.  By Noetherianity of $A$, we have $J = \<c_1,\ldots,c_r\>_{A}$ for some $c_i \in A$.  Also, since $\preceq_{dlex}$ is a well-partial-order, the final segment $F = \{\text{\rm Head}(f) : f \in B\}$ is finitely generated by $T = \{ \text{\rm Head}(b_1), \ldots, \text{\rm Head}(b_{|T|})\}$ for some $b_j \in B$.  Consider now the finite set, \[ G := \{ c_i b_j :  \,  1 \leq i \leq r, \, 1 \leq j \leq |T|\} \subseteq B.\]
We claim that $G$ is a subset of $B$ fulfilling the requirements of the theorem statement.  

Let $f  \in B \cap A[m]^k$.  Then, Head($h_1) \preceq_{dlex}$ Head($f$) for some $h_1 \in \{b_1,\ldots,b_{|T|}\}$ with witness $\sigma_1 \in \symm_m$ (by Lemma \ref{holefixer}).  There are $a_1,\ldots,a_r \in A$ such that \[ f_1 := f - \sum_{i = 1}^r a_i c_i \sigma_1 h_1 \in B\] has a strictly smaller (with respect to $\leq_{dlex}$) head than $f$.  Continuing in this manner we can produce a sequence $f_1,f_2,\ldots$ of elements in $B$ such that \[ \cdots  \leq_{dlex} \text{\rm Head}(f_2) \leq_{dlex} \text{\rm Head}(f_1) \leq_{dlex}  \text{\rm Head}(f).\]
Since $\leq_{dlex}$ is a well-ordering, it follows that $f_{p} = 0$ for some $p \in \P$ which gives an expansion for $f$ as in the statement of the theorem.
\end{proof}

\begin{cor}\label{NoethCor}
$A[\P]^k$ and $A\<\P\>^k$ are Noetherian $A[\symm_{\P}]$-modules.
\end{cor}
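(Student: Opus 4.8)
The plan is to deduce both statements directly from Theorem \ref{mainNoethThm}. Recall that an $A[\symm_\P]$-module $N$ is Noetherian precisely when every $A[\symm_\P]$-submodule of $N$ is finitely generated over the group ring $A[\symm_\P]$, so it suffices to produce, for each submodule, a finite generating set over $A[\symm_\P]$.

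First I would treat $A[\P]^k$. Let $B \subseteq A[\P]^k$ be an arbitrary $A[\symm_\P]$-submodule and let $G = \{g_1,\dots,g_s\} \subseteq B$ be the finite set supplied by Theorem \ref{mainNoethThm}. I claim $B = \langle A[\symm_\P] G \rangle$, the $A[\symm_\P]$-submodule generated by $G$. The inclusion $\supseteq$ is immediate since $G \subseteq B$ and $B$ is an $A[\symm_\P]$-submodule. For $\subseteq$, take $f \in B$. As $f$ is a finite $A$-linear combination of basis elements of $[\P]^k$, its support is finite, so there is an $m$ with $f \in A[m]^k$, hence $f \in B \cap A[m]^k$. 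Theorem \ref{mainNoethThm} then gives $\sigma_1,\dots,\sigma_\ell \in \symm_m$, elements $g_{i_1},\dots,g_{i_\ell} \in G$, and $a_1,\dots,a_\ell \in A$ with $f = \sum_{j=1}^{\ell} a_j \sigma_j g_{i_j}$. Since $\symm_m \subseteq \symm_\P$, this exhibits $f$ in $\langle A[\symm_\P] G \rangle$. Thus every submodule $B$ is finitely generated (by $G$) over $A[\symm_\P]$, and $A[\P]^k$ is Noetherian.

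For $A\langle\P\rangle^k$, I would observe that it is itself an $A[\symm_\P]$-submodule of $A[\P]^k$: its underlying set is the $A$-span of the subset $\langle\P\rangle^k \subseteq [\P]^k$ of basis vectors, and this span is $\symm_\P$-stable because a permutation carries a tuple with pairwise distinct entries to another such tuple. Since a submodule of a Noetherian module is again Noetherian (any submodule of it is also a submodule of the ambient module, hence finitely generated), the Noetherianity of $A[\P]^k$ just established immediately yields that $A\langle\P\rangle^k$ is a Noetherian $A[\symm_\P]$-module. Alternatively, one can re-run the proof of Theorem \ref{mainNoethThm} verbatim with $\langle\P\rangle^k$ in place of $[\P]^k$, using the corollary to Proposition \ref{niceOrdProp} that $\preceq_{dlex}$ restricts to a well-partial-ordering on $\langle\P\rangle^k$, together with the evident analogue of Lemma \ref{holefixer}.

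There is essentially no hard step here; the only point requiring care is the passage from the \emph{local} generation statement of Theorem \ref{mainNoethThm} (valid level by level in $A[m]^k$) to genuine finite generation of $B$ over $A[\symm_\P]$. This rests on the directed-union structure $A[\P]^k = \bigcup_{m} A[m]^k$ and on every element of the module having finite support, so that each $f \in B$ actually lands in one of the finite levels $A[m]^k$ where Theorem \ref{mainNoethThm} applies.
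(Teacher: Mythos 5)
Your proposal is correct and follows the paper's intended route: the corollary is meant as an immediate consequence of Theorem \ref{mainNoethThm} (every element of a submodule has finite support, hence lies in some $A[m]^k$, where the theorem provides the finite generating set), with the $A\<\P\>^k$ case handled either as a $\symm_\P$-stable submodule or by rerunning the argument with the nice ordering on $\<\P\>^k$, exactly as the paper's surrounding remark indicates. No gaps.
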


\begin{rem}
It turns out that Corollary \ref{NoethCor} holds when $A\<\P\>^k$ is replaced by $AS$ and  $A[\symm_{\P}]$ by  $A[\symm]$ for any $\symm$-set $S$ with a nice ordering (this follows from the argument above).  However, to prove Theorem \ref{laurentlatticethm} in the next section, we need the more refined statement found in Theorem \ref{mainNoethThm}, which asks for witnesses $\sigma$ to (\ref{preceqdef}) having  special properties.
\end{rem}

\section{Laurent chain stabilization}\label{genLaurentThm}

In this short section, we prove that invariant chains of Laurent lattice ideals stabilize (this is Theorem \ref{laurentlatticethm} from the introduction). The proof uses the order theory from the previous section and a few properties of lattice ideals. Some basic material on lattice and toric ideals can be found in \cite[Chapter 7]{CCA} and \cite{SturmfelsGBCP}, respectively, and  a more general reference for binomial ideals is \cite{EisSturm96}.

Let $\mathcal{G}$ be a finitely generated abelian group and let $a_1,\ldots,a_d$ be distinguished
generators of $\mathcal{G}$. Let $L$ denote the kernel of the surjective homomorphism $\mathbb Z^d$ onto $\mathcal{G}$.
 The \textit{lattice ideal} associated with $L$ is the following ideal in $\K[z_1,\ldots,z_d]$: \[I_L = \langle {\bf z}^ u - {\bf z}^v : u, v \in \N^d \text{ with }  u-v \in L \rangle.\]
Here,  we use the shorthand ${\bf z}^u = z_1^{u_1} \cdots z_d^{u_d}$ for $u = (u_1,\ldots,u_d) \in \Z^d$.  A \textit{toric ideal} is the special case of a lattice ideal in which the group $\mathcal{G}$ is torsion-free; in this case, the ideal $I_L$ is also a prime ideal. 

Notice that if $S = \{s_1,\ldots,s_d\}$ is a set with $d$ elements, there is a natural isomorphism between $\Z^d$ and the free $\Z$-module $\Z S$ with basis $S$ given by:
$$(a_1,\ldots,a_d)\in \Z^d  \mapsto  \sum_{i=1}^d a_i s_i\in \Z S.$$
Although simple, this identification will be useful for us below.

\begin{ex}
In the case $S=\<3\>^2=\{(1,2),(1,3),(2,1),(2,3),(3,1),(3,2)\}$, the integer vector $(-2,2,1,0,-1,0)\in\Z^6$ is also represented by $-2\cdot(1,2)+2\cdot(1,3)+(2,1)-(3,1)\in \Z\<3\>^2$.
\qed
\end{ex}

For simplicity of exposition, we focus our attention on lattice ideals in the polynomial rings $\mathcal{R}_n$ (equipped with the action of $\symm_n$) from (\ref{Rndefs}), each of which has $d_n = n^k$ indeterminates.  Let $L_n \subseteq L_{n+1}$ be an increasing sequence of subgroups of $\Z^{d_n} \subseteq \Z^{d_{n+1}}$ and let $I_n := I_{L_n} \subseteq \mathcal{R}_n$ (resp. $I_n^{\pm} \subseteq \mathcal{R}_n^{\pm}$) be the corresponding lattice (resp. Laurent lattice) ideals. 

The basic idea in our proof of Theorem \ref{laurentlatticethm} is to view $L = \bigcup_{n \in \P} L_n$ as an $\symm_{\P}$-invariant subgroup of the free abelian group $\Z [\P]^k = \bigcup_{n \in \P} \Z [n]^k$, which has free basis $[\P]^k$ over $\Z$.  The set $L$ has a finite generating set up to $\symm_{\P}$-symmetry (using Theorem \ref{mainNoethThm} and the fact that $\Z$ is Noetherian), and these vectors are all contained in $L_N$ for some integer $N$.
The remainder of the proof converts this fact back to the level of ideals.  The complete details are as follows.

Given an integer vector $h \in \Z^{d}$, we set $h_+ \in \N^{d}$ and $h_{-} \in \N^{d}$ to be the nonegative and nonpositive part of $h$, respectively (so that $h = h_+ - h_{-}$).  The following is elementary.

\begin{lem}\label{laurentLemma}
Suppose that $v, h_1,\ldots,h_m \in \Z^{d}$ and set $u = v + \sum_{i=1}^m{h_i}$.  There exists a monomial ${\bf z}^c \in \K[z_1^{\pm 1},\ldots,z_d^{\pm 1}]$ such that 
${\bf z}^c ({\bf z}^u - {\bf z}^v) \in \< {\bf z}^{{h_i}_+} - {\bf z}^{{h_i}_{-}}: i = 1,\ldots,m \>_{\K[z_1^{\pm 1},\ldots,z_d^{\pm 1} ]}.$
\end{lem}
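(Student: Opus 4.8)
The plan is to induct on $m$, peeling off one $h_i$ at a time and tracking how a Laurent monomial correction factor accumulates. The key telescoping identity is that for any $w \in \Z^d$ and any $h \in \Z^d$, we have the binomial factorization
\[
{\bf z}^{w+h} - {\bf z}^{w} = {\bf z}^{w + h - h_+}\bigl({\bf z}^{h_+} - {\bf z}^{h_-}\bigr),
\]
which holds in the Laurent ring $\K[z_1^{\pm 1},\ldots,z_d^{\pm 1}]$ because $h_+ - h_- = h$ and ${\bf z}^{w+h-h_+} = {\bf z}^{w - h_-}$ is an honest (invertible) Laurent monomial. So multiplying ${\bf z}^{w+h} - {\bf z}^w$ by the monomial ${\bf z}^{-(w - h_-)}$ lands it in the ideal generated by ${\bf z}^{h_+} - {\bf z}^{h_-}$.

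First I would set up the induction. For $m = 0$ the statement is trivial ($u = v$, take ${\bf z}^c = 1$). For the inductive step, write $u = v + \sum_{i=1}^{m} h_i$ and let $u' = v + \sum_{i=1}^{m-1} h_i$, so $u = u' + h_m$. By the factorization above,
\[
{\bf z}^{u} - {\bf z}^{u'} = {\bf z}^{u' - {h_m}_-}\bigl({\bf z}^{{h_m}_+} - {\bf z}^{{h_m}_-}\bigr)
\in \bigl\< {\bf z}^{{h_i}_+} - {\bf z}^{{h_i}_-} : i = 1,\ldots,m \bigr\>.
\]
By the inductive hypothesis applied to $u' = v + \sum_{i=1}^{m-1} h_i$, there is a Laurent monomial ${\bf z}^{c'}$ with ${\bf z}^{c'}({\bf z}^{u'} - {\bf z}^{v})$ in the ideal generated by the first $m-1$ binomials. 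Then I would write the identity
\[
{\bf z}^{c'}\bigl({\bf z}^{u} - {\bf z}^{v}\bigr) = {\bf z}^{c'}\bigl({\bf z}^{u} - {\bf z}^{u'}\bigr) + {\bf z}^{c'}\bigl({\bf z}^{u'} - {\bf z}^{v}\bigr),
\]
and observe that both summands lie in $\bigl\< {\bf z}^{{h_i}_+} - {\bf z}^{{h_i}_-} : i = 1,\ldots,m \bigr\>$: the first because ${\bf z}^{u} - {\bf z}^{u'}$ already does (and we are just multiplying by a unit ${\bf z}^{c'}$), the second by the inductive hypothesis. Hence ${\bf z}^c := {\bf z}^{c'}$ works — in fact one can take the monomial correction to be trivial, since everything is already in the Laurent ideal, but stating it with a monomial ${\bf z}^c$ matches the way the lemma will be used (to clear denominators back in the non-Laurent setting).

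I do not anticipate a real obstacle here; the only thing to be careful about is bookkeeping the signs and confirming that ${\bf z}^{w+h-h_+}$ is genuinely a monomial with a (possibly negative) integer exponent vector, which is immediate since $w + h - h_+ = w - h_- \in \Z^d$ — and this is exactly why we must work in the Laurent ring. Alternatively, one can avoid induction entirely: expand the telescoping sum ${\bf z}^u - {\bf z}^v = \sum_{j=1}^{m}\bigl({\bf z}^{v + \sum_{i \le j} h_i} - {\bf z}^{v + \sum_{i < j} h_i}\bigr)$ and apply the factorization to each term, so that ${\bf z}^u - {\bf z}^v$ itself (take ${\bf z}^c = 1$) already lies in the stated Laurent ideal. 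I would present whichever of the two is cleaner, noting that the presence of ${\bf z}^c$ in the statement is a harmless generality retained for later use.
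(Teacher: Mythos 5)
Your proof is correct and follows essentially the same route as the paper: induction on $m$, splitting ${\bf z}^u-{\bf z}^v$ via a one-step identity in which the increment by $h_m$ contributes a Laurent-monomial multiple of ${\bf z}^{{h_m}_+}-{\bf z}^{{h_m}_-}$ and the remaining difference is handled by the inductive hypothesis (the paper telescopes at the $v$ end, you at the $u$ end, which is an immaterial difference). Your observation that ${\bf z}^c$ may be taken to be $1$ in the Laurent ring is also accurate.
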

\begin{proof}
We shall induct on $m$, the base case being vacuously true.  Consider the identity:
\begin{equation}
\begin{split}
({\bf z}^u - {\bf z}^v) = {\bf z}^{h_m} ({\bf z}^{u-h_m} - {\bf z}^v) + {\bf z}^{v - {h_m}_{-}}({\bf z}^{{h_m}_{+}} - {\bf z}^{{h_m}_{-}}). \\
\end{split}
\end{equation}
As $u' = u - h_m$ has fewer terms, the proof follows by induction.
\end{proof}

Collecting these facts together, we can now prove the main result of this section.

\begin{proof}[Proof of Theorem \ref{laurentlatticethm}]
The submodule $L \subseteq \Z [\P]^k$ is finitely generated over $\Z [\symm_{\P}]$ by Theorem \ref{mainNoethThm}.  Set $H =  G \cup -G$ for a finite set of generators $G \subseteq L$ satisfying the property in Theorem \ref{mainNoethThm}, and let $N$ be such that $H \subseteq \Z^{d_N}$.  Consider two vectors $u,v \in \N^{d_m}$ such that $u - v \in L_m=L\cap [m]^k$, with $m \geq N$.   By assumption, the vector $u - v$ is a $\Z$-linear combination of $\symm_m$-permutations of elements in $H$.   By Lemma \ref{laurentLemma}, 
it follows that  ${\bf z}^u - {\bf z}^v$ is a monomial multiple of an element in the ideal (of $\mathcal{R}_m^\pm$) generated by permutations (in $\symm_m$) of $\{{\bf z}^{h_+} - {\bf z}^{h_{-}} : h \in H\}$.  Thus, $I_m^{\pm} \subseteq \<\symm_m I_N\>_{\mathcal{R}_m^{\pm}}$ and the chain stabilizes.
\end{proof}

\section{Stabilization of chains induced by monomials}\label{stablaurchainmon}

We now focus on the polynomial rings $R_n$ from (\ref{Rndefs}) and the corresponding chains of toric ideals encountered in the statement of Theorem \ref{mainthm2}.  

\begin{defn}\label{ker_ind_poly}
Let $k\in\P$ and $f\in\K[y_1,\ldots, y_k]$. For for each $n\geq k$, consider
\[ \phi_n : R_n\to T_n,\hspace{15pt} x_{(u_1,\ldots,u_k)}\mapsto f(t_{u_1},\ldots,t_{u_k}).\]
Let $I_n=\ker \phi_n$. The invariant chain $I_{k}\subseteq I_{k+1}\subseteq \cdots$ is called the invariant chain of ideals \emph{induced by the polynomial $f$}.
\end{defn}

The ideals in Definition \ref{ker_ind_poly} appear in voting theory~\cite{DEMO09}, algebraic statistics~\cite{SturmfelsSullivant05, HS, Draisma2010, GMV10}, and toric algebra~\cite{AH07, HS}.  When $f$ is a monomial, each $I_n = \ker \phi_n$ is a homogeneous toric ideal. The following was conjectured in \cite{AH07}. 

\begin{conj}[\cite{AH07}]\label{conjwmonomial}
The chain of ideals induced by any monomial stabilizes.
\end{conj}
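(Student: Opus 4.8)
Here is how I would approach Conjecture \ref{conjwmonomial}.

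\medskip

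\noindent\textbf{Proof proposal.} The difficulty, as the authors emphasize, is that Theorem \ref{laurentlatticethm} only gives stabilization of the \emph{Laurent} chain $I_\circ^\pm$, and passing back down to the non-Laurent chain $I_\circ$ is exactly the obstruction: clearing denominators in an expression $\sum_i h_i \sigma_i g_i$ multiplies a genuine binomial $\mathbf{x}^u - \mathbf{x}^v \in I_m$ by a monomial, and there is no a priori reason the monomial multiple lies in $\langle \symm_m I_N\rangle_{R_m}$. So the plan is to \emph{upgrade} the Laurent stabilization of Theorem \ref{mainthm2} to honest stabilization by controlling these denominators. First I would take $N = 2d$ from Theorem \ref{mainthm2} (or whatever bound the algorithm of Theorem \ref{ouralgthm} produces) and fix a finite generating set $\mathcal{G}$ for $I_N^\pm$ consisting of binomials. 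Given an arbitrary binomial $b = \mathbf{x}^u - \mathbf{x}^v \in I_m$ for $m \ge N$, Theorem \ref{mainthm2} writes $\mathbf{x}^c \cdot b = \sum_i h_i \sigma_i g_i$ in $R_m^\pm$ for some monomial $\mathbf{x}^c$; the goal is to show one can choose the data so that $\mathbf{x}^c$ divides the right-hand side, i.e. that $b$ itself already lies in $\langle \symm_m I_N\rangle_{R_m}$.

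\medskip

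\noindent The key structural input I would exploit is that $f$ is a \emph{monomial}, so $\phi_n$ is a monomial map and $I_n$ is a homogeneous toric ideal whose binomials $\mathbf{x}^u - \mathbf{x}^v$ satisfy $\phi_n(\mathbf{x}^u) = \phi_n(\mathbf{x}^v)$, a multidegree condition in the $t_i$. The natural strategy is a \emph{degree-filtered chain-of-binomials} argument analogous to the proof of Theorem \ref{mainNoethThm}: order the monomials of $R_m$ by (total degree in the $x$'s, then some term order), and given $b = \mathbf{x}^u - \mathbf{x}^v \in I_m$, use $\symm_m$-translates of the generators in $\mathcal{G}$ to cancel the leading monomial $\mathbf{x}^u$, producing $b' \in I_m$ of strictly smaller leading term, and iterate. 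Because $\mathcal{G}$ generates $I_N^\pm$, there is always \emph{some} generator $g$ and $\sigma \in \symm_m$ (with a witness of the right support, via Lemma \ref{holefixer}/Proposition \ref{niceOrdProp} applied to the index set $\<\P\>^k$) whose leading monomial divides $\mathbf{x}^u$ up to a Laurent factor; the crux is to show this cancellation can be arranged \emph{within $R_m$}, i.e. with a genuine polynomial coefficient $h = \mathbf{x}^w$ rather than a Laurent monomial. Here I would use homogeneity: since $\phi_m(\mathbf{x}^u) = \phi_m(\sigma \mathbf{x}^{\mathrm{lead}(g)}) \cdot \phi_m(\text{rest})$ forces the complementary factor to be a genuine monomial in the $t_i$, and since $\phi_m$ is injective on the relevant monomial cones for a monomial map, one can lift it back to a genuine monomial in the $x$'s — this is essentially the ``replace the extra indeterminate by a quotient of monomials, but now the quotient is honest'' phenomenon from the worked examples in the introduction, and it should be provable whenever $f$ is a monomial.

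\medskip

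\noindent Concretely the steps are: (1) invoke Theorem \ref{mainthm2} to get $N$ and a binomial generating set $\mathcal{G}$ of $I_N^\pm$; (2) show each $g \in \mathcal{G}$ can be taken to be a \emph{primitive} binomial of $I_N$ (not merely $I_N^\pm$) — i.e. that $I_N^\pm \cap R_N = I_N$ and the Laurent generators, after clearing denominators, still generate $I_N$ over $R_N$, which is a standard fact for toric ideals since $I_N = I_N^\pm \cap R_N$ with $\mathbf{x}_i$ nonzerodivisors; (3) run the leading-term reduction on an arbitrary $b \in I_m$, using Proposition \ref{niceOrdProp} to guarantee at each stage a suitable $\symm_m$-translate with controlled support exists, and using the homogeneity of the monomial map to guarantee the cofactor is a genuine monomial; (4) conclude by well-ordering of the term order that the process terminates at $0$, giving $b \in \langle \symm_m \mathcal{G}\rangle_{R_m} \subseteq \langle \symm_m I_N\rangle_{R_m}$.

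\medskip

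\noindent The main obstacle I expect is step (3)'s claim that the reducing cofactor is always a genuine (not Laurent) monomial. This is false for general lattice ideals — which is precisely why Theorem \ref{laurentlatticethm} is stated only up to monomial localization — so the argument must genuinely use that $f$ is a monomial, presumably via a careful analysis of the semigroup $\mathbb{N}\{\phi_m(\mathbf{x}_\alpha) : \alpha \in \<m\>^k\}$ and its normality/saturation properties. If that fails in general, a fallback is to prove the conjecture only for special classes of monomials $f$ (e.g. those for which this semigroup is normal, or those of the form $y_1 \cdots y_j$, the square-free case already known), or to bound the denominator exponent $\deg(\mathbf{x}^c)$ uniformly in $m$ and absorb it by enlarging $N$. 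I do not expect a clean unconditional proof by this route for \emph{every} monomial — consistent with the authors flagging it as open even for $f = y_1^2 y_2$ — so the honest deliverable of this plan is a conditional reduction of Conjecture \ref{conjwmonomial} to a normality/saturation statement about the associated affine semigroup, plus a complete proof in the square-free and other normal cases.
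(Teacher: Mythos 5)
You have not proved the statement, and indeed no proof exists in the paper: Conjecture \ref{conjwmonomial} is stated there as an open problem, explicitly flagged as unknown even for the single monomial $f = y_1^2y_2$, and the paper's own contributions (Theorems \ref{laurentlatticethm} and \ref{mainthm2}) deliberately stop at the Laurent level. Your proposal is a strategy, and you candidly identify its weak point yourself; but that weak point is fatal as stated, and the paper already contains the result that kills your fallback. The crux of your step (3) is the claim that, because $\phi_m$ is a monomial map, the cofactor needed to cancel a leading term of $\mathbf{x}^u - \mathbf{x}^v$ by a $\symm_m$-translate of a generator can always be lifted from a $t$-monomial to a genuine $x$-monomial. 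That lifting is exactly a question about whether every lattice point of $\mathrm{pos}(\matrA_m)\cap\spanZ(\matrA_m)$ arising in the reduction lies in $\spanN(\matrA_m)$, i.e.\ a normality/saturation statement about the semigroup generated by the columns of $\matrA_m$. Corollary \ref{not-normal} shows this semigroup is \emph{never} normal when the monomial is not square-free: the vector $(1,\ldots,1,0,\ldots,0)$ of Lemma \ref{columnspan} lies in the cone and the lattice but not in the $\N$-span. So the ``conditional reduction to normality'' you offer as a fallback is vacuous beyond the square-free case, which is already known by entirely different means ([AH07, Theorem 5.7], resting on quadratic generation of the ideals $I_n$ themselves, not on normality). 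Your plan therefore reduces the conjecture to a condition that provably fails in every open case.

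A secondary issue: your step (2) asserts that clearing denominators in a Laurent generating set of $I_N^{\pm}$ yields generators of $I_N$ over $R_N$ ``since $I_N = I_N^{\pm}\cap R_N$.'' The saturation identity is fine (the $x_w$ are nonzerodivisors modulo a prime toric ideal), but the inference is not: $I_N^{\pm}\cap R_N = I_N$ does not imply that the de-Laurentized generators of $I_N^{\pm}$ generate $I_N$ --- that is precisely the contraction-versus-generation gap that separates Laurent stabilization from honest stabilization, and the paper's Section \ref{sec:themainexample} illustrates it concretely: the four binomials $H^{\pm}$ generate every $I_n^{\pm}$ up to symmetry yet fail to generate the non-Laurent chain. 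What your write-up gets right is the diagnosis of where the difficulty lives (controlling the monomial denominators uniformly in $m$), and the honest admission that the route does not close; but as a proof of Conjecture \ref{conjwmonomial} it has a genuine, named gap at step (3), and the proposed conditional substitute is ruled out by Corollary \ref{not-normal}.
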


The authors of \cite{AH07} verified the special case of Conjecture \ref{conjwmonomial} when $f$ is a square-free monomial.  Underlying their proof is the fact that for every $n \geq k$, the ideals $I_n$ are generated by quadratic binomials \cite[Theorem 14.2]{SturmfelsGBCP}. Unfortunately, the corresponding statement is false when $f$ is not square-free. Although a proof for the general conjecture is not known, Theorem \ref{laurentlatticethm} shows (albeit nonconstructively) that the Laurent versions of these chains stabilize.   

The main goal of this section is to provide an effective version of Theorem \ref{laurentlatticethm} for this situation that allows for explicit computation of generators (this is Theorem \ref{mainthm2} from the introduction).  In the next section, we describe this algorithm and give a reference to an implementation of it in software.  We also explain another approach to finding these generators at the end of this section. 

Our running example throughout will be the case $f=y_1^2 y_2$, and all computations were performed using \texttt{Macaulay2}~\cite{M2} and \texttt{4ti2}~\cite{4ti2}.
If $I_\circ$ is the chain of ideals induced by $y_1^2 y_2$, then Theorem \ref{laurentlatticethm} guarantees stabilization of $I^\pm_\circ$. Moreover, Theorem \ref{mainthm2} provides a stabilization bound $N=2\cdot \deg (f) = 6$.  
Using Algorithm \ref{algorithmourthm} from Section \ref{algorithms}, the following is a generating set for $I^\pm_\circ$ (below, we use a shorthand notation for indices; e.g., $x_{16} = x_{(1,6)}$):

{ \renewcommand{\tabcolsep}{2.5mm}
\renewcommand{\arraystretch}{1.3}
\begin{tabular}{r l l } 
$G^\pm \ =$ &$\big \{x_{16}x_{21}^2x_{54}x_{65}-x_{14}x_{15}x_{26}^2x_{56},$
&$x_{16}^2x_{21}^4x_{43}x_{65}-x_{13}x_{14}^2x_{15}x_{26}^4$,\\
&\ \ $x_{16}^2x_{21}^4x_{45}x_{65}-x_{14}^2x_{15}^2x_{26}^4,$
&$x_{16}x_{21}^2x_{34}x_{65}-x_{14}x_{15}x_{26}^2x_{36},$\\
& \ \ $x_{16}x_{21}^2x_{36}-x_{13}^2x_{26}^2,$
&$x_{16}^2x_{21}^2x_{32}-x_{12}x_{13}^2x_{26}^2$,\\
& \ \ $x_{13}x_{43}-x_{14}x_{34},$
&$x_{13}x_{24}-x_{14}x_{23}\big\}.$ 
\end{tabular}}

Therefore, the chain of Laurent ideals $I^\pm_\circ$ induced by $y_1^2y_2$ is generated by these $8$ elements of $G^\pm$ up to the action of the symmetric group.  It is important to remark that these binomials are not generators of the original ideal $I_6$, nor of the chain $I_\circ$.  Moreover, this generating set is not smallest possible, as shown in Section \ref{sec:themainexample}, where we study the combinatorial structure of this special case and find a generating set with only $4$ elements for the Laurent chain $I^\pm_\circ$.

\subsection{Proof of Theorem \ref{mainthm2}}

First observe that the inclusion $R_n\hookrightarrow R_n^\pm$ gives us for every $n\geq k$ an extension of $\phi_n$ given by the homomorphism $\psi_n: R_n^\pm\rightarrow T_n^\pm$ satisfying $\psi_n(x_u)=\phi_n(x_u)$ and $\psi_n(x_u^{-1})=\phi_n(x_u)^{-1}$ for all $u\in \< n \>^k$. Notice that $I_n^{\pm}=\ker \psi_n$ and that we have the following commutative diagram:  
\begin{equation}\label{com_diagram}
\xymatrix{
R_n \ar@{^{(}->}[r] \ar[rd]_{\phi_n} & R_n^{\pm}\ar[d]^{\psi_n} \\
& T_n^{\pm}}
\end{equation}

Let $\alpha\in \N^k$ be the exponent vector of a (non-constant) monomial $f= {\bf y}^\alpha =y_1^{\alpha_1}\cdots y_k^{\alpha_k}$, and consider 
\[\matrA_n :=\{ \sigma(\alpha_1,\ldots,\alpha_k,0,\ldots,0)^\top \in \Z^n : \sigma\in\symm_n \}.\]
The set of column vectors $\matrA_n$ can be represented as an $n \times {n\choose k}k!$ matrix with rows indexed by the indeterminates $t_i$ (for $i=1,\ldots, n$) and columns indexed by the indeterminates $x_w$ (for $w\in\< n \>^k$).  The matrix $\matrA_n$ defines a semigroup homomorphism $\N^{{n\choose k}k!}\rightarrow \N^n$ which lifts to the homomorphism $\phi_n$. The kernel $I_n$ is generated by the set:
$$
\left\{ {\bf x}^a - {\bf x}^b : \matrA_n(a)=\matrA_n(b), \, a,b \in \N^{{n\choose k}k!}\right\}. 
$$
For more details about toric ideals and their generating sets, see \cite{SturmfelsGBCP}.

\begin{ex}\label{example An}
Let $k=2$, $n=3$, and $\alpha=(2,1)$. The following represents the matrix $\matrA_3$ associated to the homomorphism $\phi_3$ defined by $f=y_1^2y_2$. 
\begin{center}
\begin{tabular}{ccccccc}
 & $x_{12}$ & $x_{13}$ & $x_{21}$ & $x_{23}$ & $x_{31}$ & $x_{32}$ \\
$t_1$ & 2 &2 &1& 0& 1& 0 \\
$t_2$ & 1 &0 &2 &2 &0 &1 \\
$t_3$ & 0 &1 &0 &1 &2 &2
\end{tabular}
\end{center} 
The ideal $I_3$ is generated by binomials:  $\{x_{13}x_{21}^2-x_{12}^2x_{23},\, 
x_{13}^2x_{21}-x_{12}^2x_{31},\,
x_{21}x_{31}-x_{12}x_{32}$, $\,
x_{21}^2x_{32}-x_{12}x_{23}^2,\,
x_{13}x_{23}-x_{12}x_{32},\,
x_{13}x_{21}x_{32}-x_{12}x_{23}x_{31},\,
x_{13}^2x_{32}-x_{12}x_{31}^2,\,
x_{23}x_{31}^2-x_{13}x_{32}^2$, $\,
x_{23}^2x_{31}-x_{21}x_{32}^2\}.$
\qed
\end{ex}

Next, we argue that it suffices to study those maps $\phi_n : R_n\rightarrow T_n$ defined by an exponent vector $\alpha=(\alpha_1,\ldots, \alpha_k)\in\N^k$ with $\gcd(\alpha):=\gcd(\alpha_1,\ldots, \alpha_k)=1$. To see this, suppose that $\gcd(\alpha)=\ell>1$, and consider $\alpha'= \ell^{-1}\cdot\alpha$.  Let $\phi_n$ and $\phi_n'$ be the homomorphisms given by $\phi_n(x_w) = t_{w_1}^{\alpha_1}\cdots t_{w_k}^{\alpha_k}$ and $\phi_n'(x_w) = t_{w_1}^{\alpha_1'}\cdots t_{w_k}^{\alpha_k'}$, respectively. Note that $\phi_n(x_w) = (\phi_n'(x_w))^\ell$ for all $w\in\<n\>^k$, so if $a,\,b\in \N^{{n\choose k}k!}$, then $\phi_n({\bf x}^a) = \phi_n({\bf x}^b) \iff \phi_n'({\bf x}^{ a})^\ell = \phi_n'({\bf x}^{b})^\ell \iff \phi_n'({\bf x}^{ a}) = \phi_n'({\bf x}^{b})$ (as $\phi_n'({\bf x}^a)$ and $\phi_n'({\bf x}^b)$ are monomials in $T_n$); thus, ${\bf x}^a - {\bf x}^b \in\ker\phi_n$ if and only if ${\bf x}^a - {\bf x}^b \in\ker\phi_n'$.

Our first basic tool is a combinatorial lemma describing the $\Z$-linear column span of $\matrA_n$ inside $\Z^n$.
For $\alpha\in\Z^k$, we set $|\alpha| := \sum_{i=1}^k \alpha_i$.

\begin{lem}\label{columnspan}
Let $\alpha = (\alpha_1,\ldots,\alpha_k)\in \N ^k$ with $\gcd(\alpha_1,\ldots,\alpha_k)=1$. 
The integral span of the columns $\matrA_n$ ($n > k$) is:
\[ \spanZ(\matrA_n) = \{ \beta\in\Z^n :\, |\beta|\equiv 0 \mod |\alpha|\}.\]
\end{lem}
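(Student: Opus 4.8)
\textbf{Proof proposal for Lemma \ref{columnspan}.}
The plan is to prove the claimed equality by establishing two inclusions, the easy one being $\spanZ(\matrA_n) \subseteq \{\beta \in \Z^n : |\beta| \equiv 0 \bmod |\alpha|\}$. Indeed, every column of $\matrA_n$ is a permutation of $(\alpha_1,\ldots,\alpha_k,0,\ldots,0)^\top$, whose coordinate sum is exactly $|\alpha|$; hence any $\Z$-linear combination of columns has coordinate sum a multiple of $|\alpha|$. The content of the lemma is therefore the reverse inclusion: every $\beta \in \Z^n$ with $|\beta| \equiv 0 \bmod |\alpha|$ lies in the column span.

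For the reverse inclusion, I would first reduce to producing a small explicit set of elements of $\spanZ(\matrA_n)$ that generate the target lattice. The key observations are: (1) since $\gcd(\alpha_1,\ldots,\alpha_k) = 1$, there exist integers $c_1,\ldots,c_k$ with $\sum c_i \alpha_i = 1$; using columns supported on a common pair of coordinates (say placing $\alpha_i$ in slot $p$ and the rest elsewhere, versus placing $\alpha_i$ in slot $q$), differences of columns of the form $\sigma\cdot(\alpha,0,\ldots) - \tau\cdot(\alpha,0,\ldots)$ produce vectors like $\alpha_i(e_p - e_q)$ for each $i$, and taking the combination $\sum c_i \alpha_i (e_p - e_q) = e_p - e_q$ shows that every \emph{difference of standard basis vectors} $e_p - e_q$ lies in $\spanZ(\matrA_n)$ — this is where $n > k$ is used, to have enough free coordinates to park the unused entries of $\alpha$. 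This already shows $\spanZ(\matrA_n)$ contains the entire ``sum-zero'' sublattice $\{\beta : |\beta| = 0\}$. (2) A single column itself is an element of $\spanZ(\matrA_n)$ with coordinate sum exactly $|\alpha|$. Combining (1) and (2): any $\beta$ with $|\beta| = m|\alpha|$ can be written as ($m$ times a fixed column) plus a correction term of coordinate sum zero, and the correction term lies in the sum-zero sublattice by (1); hence $\beta \in \spanZ(\matrA_n)$.

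The main obstacle I anticipate is step (1) — carefully checking that for $n > k$ one can realize each $\alpha_i(e_p - e_q)$ as an \emph{integer} combination (in fact a difference) of two genuine columns of $\matrA_n$, i.e.\ two permutations of $(\alpha_1,\ldots,\alpha_k,0,\ldots,0)$. The cleanest way is: fix distinct coordinates $p, q$ and, for each $i \in [k]$, choose a permutation $\sigma$ sending the $i$-th slot of $\alpha$ to position $p$ and the remaining $k-1$ nonzero slots to positions in $[n]\setminus\{p,q\}$ (possible since $n \geq k+1$ gives at least $k-1$ such positions besides $p$), and a permutation $\tau$ agreeing with $\sigma$ except that it sends the $i$-th slot to $q$ instead of $p$; then $\sigma\cdot(\alpha,0,\ldots) - \tau\cdot(\alpha,0,\ldots) = \alpha_i e_p - \alpha_i e_q$. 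One should also handle the degenerate possibility $\alpha_i = 0$ for some $i$ (harmless, since such terms contribute nothing and $\gcd$ is taken over the nonzero entries anyway, or one simply drops the zero $\alpha_i$'s and works with the support). Once (1) is in hand, the assembly in step (2) and the forward inclusion are routine, and the lemma follows.
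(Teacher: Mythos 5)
Your proposal is correct and follows essentially the same route as the paper's proof: both use the easy containment plus B\'ezout coefficients from $\gcd(\alpha)=1$ to show that every difference $e_p-e_q$ of standard basis vectors (equivalently, the whole sum-zero sublattice) lies in $\spanZ(\matrA_n)$, and then write any $\beta$ with $|\beta|=q|\alpha|$ as $q$ times a column plus a sum-zero correction. The only cosmetic difference is the order of operations — you take differences of two columns to get $\alpha_i(e_p-e_q)$ and then form the B\'ezout combination, while the paper first forms a B\'ezout combination of transposed columns and then subtracts a permuted copy — which changes nothing essential.
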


\begin{proof}
Let $\mathfrak{A}=\{ \beta\in\Z^n :\, |\beta|\equiv 0 \mod |\alpha|\}$; clearly, $\spanZ(\matrA_n) \subseteq \mathfrak{A}$. By assumption, $\gcd(\alpha)=1$; thus, there are integers $b_1,\ldots,b_k\in \Z$ with $b_1 \alpha_1 +\cdots + b_k \alpha_k = 1$.

For every $j=1,\ldots, n-1$ and $i=1,\ldots,k$, let $\sigma_{ij}$ be the transposition $(i \, j)\in \symm_n$, and consider the vector
$$
h_j=b_1(\sigma_{1j}\alpha)+\cdots +b_k(\sigma_{kj}\alpha).
$$
Notice that $h_j$ is a vector whose $j$th entry is $1$ and $h_j\in \text{Span}\{\matrA_n\}$. Consider also the transposition $\tau_j=(j\, n)$, and the vector 
$$
h_j' =  \tau_j h_j 
= b_1(\tau_j\sigma_{1j}\alpha)+\cdots +b_k(\tau_j\sigma_{kj}\alpha).
$$
For every $i$ and $j$, the composition $\tau_j\sigma_{ij}$ is the transposition $(i\, n)\in\symm_n$; thus, $h'_j$ is obtained from $h_j$ by changing the 1 from position $j$ to position $n$.  Naturally, $h_j'\in \spanZ(\matrA_n)$.  Let $\widehat{h_j}=h_j-h_j' \in \spanZ(\matrA_n)$. Notice that $\widehat{h_j}$ is the vector with 1 in the $j$th position, $-1$ in the $n$th, and zeroes elsewhere.

Now, let $\beta=(\beta_1,\ldots,\beta_n) \in\mathfrak{A}$. By assumption, there exists $q\in\Z$ such that $|{\bf \beta}|=q|\alpha|$. For every $j=1,\ldots,n-1$, there is $r_j\in\Z$ with $\beta_j = q\alpha_j +r_j$. Set $$\gamma=q\alpha + \sum_{j=1}^{n-1} r_j \widehat{h_j}\in \spanZ(\matrA_n) .$$
It is easy to check that $\beta=\gamma$, and so $\beta\in \spanZ(\matrA_n)$ as desired.
\end{proof}

\begin{ex}\label{integer combination}
Consider $\alpha=(2,\, 1)$ and $n=3$. Since $\gcd(2,1)=1$, we can write $1=(1)2+(-1)1$. The vectors $\widehat{h}_1,\widehat{h}_2 \in \text{Span}_{\Z}\{\matrA_3\}$ from the proof of Lemma \ref{columnspan} are precisely $(1,0,-1)^{\top}$, $(0,1,-1)^{\top}$.
Therefore, the vector $u=(1,1,1)^{\top}\in\mathfrak{A}$ can be written as
\begin{equation*}
\left(\begin{array}{c}1 \\1 \\1 \\ \end{array}\right) = 
\left(\begin{array}{c}2 \\1 \\0 \end{array}\right)
-\left(\begin{array}{c}2 \\0 \\1 \end{array}\right)
+\left(\begin{array}{c}1\\0 \\2\end{array}\right)  \in \spanZ(\matrA_3).
\qed
\end{equation*}
\end{ex}

One immediate consequence of Lemma \ref{columnspan} is that the toric ideals in this section are not normal. This likely contributes to the difficulty of proving stabilization for chains induced by a non-square-free monomial.

\begin{cor}\label{not-normal}
Let ${\bf y}^\alpha$ be a non-square-free monomial in $\K[y_1,\ldots,y_k]$. For every $n\geq |\alpha|$, the toric ideal $I_n$ induced by the monomial ${\bf y}^\alpha$ is not normal.
\end{cor}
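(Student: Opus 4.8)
The plan is to exhibit an explicit lattice point in the saturation of the semigroup $\spanN(\matrA_n)$ that fails to lie in the semigroup itself, thereby witnessing non-normality. Recall that a toric ideal $I_n$ is normal precisely when the affine semigroup $\mathbb{N}\matrA_n$ generated by the columns of $\matrA_n$ equals its saturation $\spanZ(\matrA_n) \cap \spanN(\matrA_n)_{\mathbb{Q}\geq 0}$, i.e. every lattice point in the cone over the columns that lies in the integer column span is already a nonnegative integer combination of columns. So it suffices to produce $\beta \in \Z^n$ with $\beta \in \spanZ(\matrA_n)$ and $\beta$ in the rational cone generated by the columns of $\matrA_n$, but $\beta \notin \spanN(\matrA_n)$.

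The natural candidate, using that ${\bf y}^\alpha$ is non-square-free so $|\alpha| \geq 2$, is the all-ones vector scaled appropriately, or more simply a vector of the form $\beta = (1,1,\ldots,1,\, c)$ chosen so that $|\beta| \equiv 0 \bmod |\alpha|$ (possible by adjusting the last coordinate). First I would check $\beta \in \spanZ(\matrA_n)$: this is immediate from Lemma \ref{columnspan}, since by construction $|\beta| \equiv 0 \bmod |\alpha|$. Next I would verify that $\beta$ lies in the rational cone spanned by the columns $\sigma(\alpha_1,\ldots,\alpha_k,0,\ldots,0)^\top$: since each coordinate of $\beta$ is positive and the columns collectively have positive entries in every coordinate position (for $n>k$ every index appears), a suitable small positive rational combination, e.g. an average of enough column vectors, dominates any prescribed positive vector after scaling; concretely $\frac{1}{n-1}\sum$ over an orbit of columns has all coordinates equal to $\tfrac{k|\alpha|/|\alpha|\cdots}{}$ — one picks the multiple of $\beta$ that sits strictly inside. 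Finally, the crux: show $\beta \notin \spanN(\matrA_n)$. Here one uses that $\beta$ has some coordinate equal to $1$ (or a value smaller than $\min_i \alpha_i \geq 1$ but forcing non-square-freeness means some $\alpha_i \geq 2$): any nonnegative integer combination $\sum c_w \matrA_n(w)$ that has a coordinate equal to $1$ in position $t_i$ must use columns touching $t_i$ only with the exponent $1$ available — but since ${\bf y}^\alpha$ is not square-free, reproducing a $1$ in one coordinate while keeping the others at prescribed small values is obstructed by the fact that each column contributes a block of exponents $\{\alpha_1,\ldots,\alpha_k\}$ including an entry $\geq 2$, which over-shoots. I would make this precise by a short counting/parity argument tailored to the coordinate carrying the value $1$.

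The step I expect to be the main obstacle is the last one: cleanly ruling out $\beta \in \spanN(\matrA_n)$ for the chosen $\beta$. The rational-cone membership and the integer-span membership are routine (the latter is literally Lemma \ref{columnspan}), but the semigroup-membership obstruction requires choosing $\beta$ carefully so that the combinatorics of which columns can contribute to each coordinate genuinely forbids a nonnegative integral representation. A robust choice is to take $\beta$ with a coordinate equal to $1$ and exploit that, because $\max_i \alpha_i \geq 2$, writing $1$ in the $t_j$-coordinate as a sum of entries drawn from $\{\alpha_1,\ldots,\alpha_k\}$ (the possible contributions of a column to coordinate $j$) forces each contributing column to use the smallest exponent there; then a global degree count — comparing $|\beta|$ with the forced number of columns times $|\alpha|$ — yields a contradiction. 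Should the all-ones-type vector prove awkward for a given $\alpha$, an alternative is to base the argument on the Hilbert basis of the $2$-variable case $k=2$, $\alpha=(a,b)$ with $a\geq 2$, where the gap between $\spanN$ and its saturation is transparent, and then embed into general $n$ and $k$ by padding with zeros and applying Lemma \ref{columnspan}.
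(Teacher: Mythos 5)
Your overall strategy coincides with the paper's: use the normality criterion $pos(\matrA_n)\cap\spanZ(\matrA_n)=\spanN(\matrA_n)$, get $\Z$-span membership from Lemma \ref{columnspan}, and exhibit a lattice point of the cone that is not an $\N$-combination of columns. But the two steps you yourself identify as delicate are in fact left open, and your choice of witness makes them genuinely harder than they need to be. First, cone membership: your justification (``the columns collectively have positive entries in every coordinate, so a small positive combination dominates any positive vector after scaling'') is not an argument --- $pos(\matrA_n)$ is not the nonnegative orthant and is not closed under passing to coordinatewise-smaller vectors, so domination proves nothing, and the explicit computation you start breaks off mid-formula. Second, and more seriously, the exclusion $\beta\notin\spanN(\matrA_n)$: for $\beta=(1,\dots,1,c)$ the sketch ``each column carries an entry $\geq 2$, which overshoots'' fails, because the adjusted coordinate $c$ can be large and can legitimately absorb the entries $\geq 2$ of several columns (and when some $\alpha_i=1$ a column can contribute exactly $1$ to a small coordinate). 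Ruling this out requires the counting argument you defer, so as written the crux of the proof is missing.

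The paper avoids both difficulties by choosing the witness of \emph{minimal degree}: $z=(1,\dots,1,0,\dots,0)^{\top}$ with exactly $|\alpha|$ ones (possible since $n\geq|\alpha|$, after the reduction to $\gcd(\alpha)=1$). Then $|z|=|\alpha|$, so $z\in\spanZ(\matrA_n)$ by Lemma \ref{columnspan}; cone membership is the explicit identity $z=\frac{1}{|\alpha|}\bigl(\alpha+\tau\alpha+\cdots+\tau^{|\alpha|-1}\alpha\bigr)$ with $\tau=(1\,2\,\cdots\,|\alpha|)$; and exclusion is immediate from a degree count: every column has coordinate sum $|\alpha|$, so an $\N$-combination equal to $z$ must consist of a single column, which is a permutation of $\alpha$ and hence has an entry $\geq 2$ since ${\bf y}^\alpha$ is not square-free, contradicting $z\in\{0,1\}^n$. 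If you keep your $(1,\dots,1,c)$ witness you must supply both a genuine cone-membership certificate and the global counting argument; switching to the minimal-degree $0/1$ vector collapses both steps to one line each.
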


Recall from \cite[Proposition 13.5]{SturmfelsGBCP} that a toric ideal $I_\matrA$ is normal if and only if $pos(\matrA)\cap \spanZ(\matrA) = \spanN(\matrA)$, where $pos(\matrA)$ is the polyhedral cone defined by the columns of $\matrA$. 

\begin{proof}
Let $\alpha\in\N^k$ with $\gcd(\alpha)=1$, and let $\tau\in\symm_n$ be the cyclic permutation $\tau = (1\, 2 \cdots |\alpha|)$.  Realize $\alpha \in\Z^n$ by $\alpha =(\alpha_1,\ldots, \alpha_k,0,\ldots,0)^\top\in\Z^n$. Consider the following identity:
\begin{equation*}\label{eq:inthecone}
(1, \ldots,1,0,\ldots,0)^{\top} = \frac{1}{|\alpha|}(\alpha + \tau \alpha + \cdots + \tau^{|\alpha|-2}\alpha + \tau^{|\alpha|-1}\alpha).
\end{equation*}
By construction $z = (1, \ldots,1,0,\ldots,0)^{\top}\in pos(\matrA_n)$ and $|z| = |\alpha|$; thus, by Lemma \ref{columnspan} we see that $z \in pos(\matrA_n)\cap \spanZ(\matrA_n)$. However, since ${\bf y}^\alpha$ is not square-free, we must have $z \notin \spanN(\matrA_n)$.
\end{proof}

Although not required for the proof of Theorem \ref{mainthm2}, the Smith normal form (SNF) of the matrices $\matrA_n$ can be easily computed from Lemma \ref{columnspan}. For basic properties and algorithms involving the SNF over a principal ideal domain, we refer the reader to \cite{HGK2004, Yap2000}.

\begin{cor}\label{coroSNF}
Let $\alpha\in\N^k$ such that $\gcd(\alpha)=1$. For $n > k$ consider the matrix
$$ \matrA_n=\left( \sigma(\alpha_1,\ldots,\alpha_k,0,\ldots,0)^{\top} \in \Z^n : \sigma\in\symm_n \right).$$
The Smith normal form for $\matrA_n$ is $\diag(1,\ldots,1,|\alpha|)$.
\end{cor}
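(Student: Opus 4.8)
The plan is to read off the Smith normal form directly from the description of the integral column span provided by Lemma \ref{columnspan}. Recall that for an integer matrix $M$ of rank $r$, the Smith normal form $\diag(s_1,\ldots,s_r)$ (with $s_1 \mid s_2 \mid \cdots \mid s_r$) is characterized by the property that $s_1 \cdots s_i = d_i(M)$, the $\gcd$ of all $i\times i$ minors of $M$; equivalently, $s_i = d_i(M)/d_{i-1}(M)$. However, rather than computing minors, I would use the cleaner characterization in terms of the column lattice: if $\matrA_n$ has column span $\spanZ(\matrA_n) = \Lambda \subseteq \Z^n$, then the invariant factors of $\matrA_n$ are exactly the invariant factors of the inclusion $\Lambda \hookrightarrow \Z^n$ together with zeros for the corank; since $\Lambda$ (as we will see) has finite index in $\Z^n$, there is no corank, and $\matrA_n$ has rank $n$.

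First I would invoke Lemma \ref{columnspan}: since $\gcd(\alpha)=1$, the span is $\Lambda = \{\beta \in \Z^n : |\beta| \equiv 0 \pmod{|\alpha|}\}$, the kernel of the surjective homomorphism $\Z^n \to \Z/|\alpha|\Z$ given by $\beta \mapsto |\beta| \bmod |\alpha|$. Hence $\Z^n/\Lambda \cong \Z/|\alpha|\Z$ as abelian groups. Now choose the basis $e_1, e_1+e_2-e_1 = \ldots$ — more precisely, a convenient $\Z$-basis of $\Z^n$ adapted to $\Lambda$: take $v_1 = e_1$, and $v_i = e_i - e_{i-1}$ for $i=2,\ldots,n$ (or directly $v_i = \widehat{h}_{i-1} = e_{i-1}-e_n$ type vectors as in the proof of Lemma \ref{columnspan}). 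In such coordinates one checks that $\Lambda$ has basis $|\alpha| v_1, v_2, \ldots, v_n$, so the inclusion $\Lambda \hookrightarrow \Z^n$ has matrix $\diag(|\alpha|,1,\ldots,1)$ in adapted bases. Reordering (which only permutes the diagonal) and sorting so the divisibility chain $1 \mid \cdots \mid 1 \mid |\alpha|$ holds, the Smith normal form of this inclusion — and therefore of $\matrA_n$, since $\matrA_n$ and the inclusion map $\Lambda \hookrightarrow \Z^n$ have the same image and $\matrA_n$ has no kernel contribution to the invariant factors beyond the surjection onto $\Lambda$ — is $\diag(1,\ldots,1,|\alpha|)$.

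The one point requiring a small argument is why the invariant factors of $\matrA_n$ (which is an $n \times \binom{n}{k}k!$ matrix, very far from square) coincide with those of the inclusion $\Lambda \hookrightarrow \Z^n$. This follows because the Smith normal form of any integer matrix depends only on its image lattice in the target: if $\matrA_n = U D V$ is a Smith decomposition, then the columns of $U D$ span the same lattice as the columns of $\matrA_n$, namely $\Lambda$, and $U$ being invertible over $\Z$ means $D$'s nonzero diagonal entries are precisely the invariant factors of $\Lambda$ in $\Z^n$; the extra $\binom{n}{k}k! - n$ columns of $D$ are zero. I expect this bookkeeping about non-square matrices to be the only mild obstacle; everything else is immediate from Lemma \ref{columnspan}. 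Alternatively, and perhaps more transparently, one can argue directly: the $n$ vectors $\alpha, \tau\alpha, \ldots$ (or the $\widehat h_j$ together with $\alpha$ itself, exhibited explicitly in the proof of Lemma \ref{columnspan}) already form an $n\times n$ submatrix of $\matrA_n$ whose determinant is $\pm|\alpha|$ up to the contributions organized there, so $d_n(\matrA_n) = |\alpha|$; meanwhile $d_{n-1}(\matrA_n) = 1$ because the $\widehat h_j = e_j - e_n$ span a rank-$(n-1)$ unimodular sublattice, forcing some $(n-1)\times(n-1)$ minor to equal $\pm 1$. Then $s_n = d_n/d_{n-1} = |\alpha|$ and $s_1 = \cdots = s_{n-1} = 1$, giving $\diag(1,\ldots,1,|\alpha|)$.
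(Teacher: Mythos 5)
Your proposal is correct and follows essentially the same route as the paper: both rest on Lemma \ref{columnspan} and the identification $\Z^n/\spanZ(\matrA_n)\cong\Z/|\alpha|\Z$, with your adapted-basis computation of the invariant factors of the column lattice playing the role of the paper's reduction via the vectors $\widehat{h_j}$ together with the structure theorem. (Your supplementary minor-gcd sketch is looser --- the $\widehat{h_j}$ are integer combinations of columns of $\matrA_n$, not columns themselves, so it does not immediately produce an $(n-1)\times(n-1)$ minor equal to $\pm 1$ --- but your main lattice argument does not rely on it.)
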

\begin{proof}
Use vectors $\widehat{h_j}$ from the proof of Lemma \ref{columnspan} to reduce the matrix $\matrA_n$ to SNF $\diag(1,\ldots,1,d)$, for some $d\in\N$.  From Lemma \ref{columnspan}, we know that $\spanZ(\matrA_n)$ is $\mathfrak A=\{\beta\in \Z^n : |\beta| \equiv 0 \mod |\alpha|\}$. Since $\Z^n/\mathfrak A$ is a finitely generated $\Z$-module,  the fundamental decomposition theorem for modules \cite[Theorem 7.8.2]{HGK2004} implies that
\begin{equation*}
\Z^n/\mathfrak A \cong \Z/d\Z.
\end{equation*}
On the other hand, $\mathfrak A$ is the kernel of the map $\Z^n \rightarrow \Z/|\alpha|\Z$ given by $\beta \mapsto |\beta| \mod |\alpha|$; therefore, 
\begin{equation*}
\Z^n/\mathfrak A \cong \Z/|\alpha|\Z,
\end{equation*}
as $\Z$-modules. 
Hence, $\Z/|\alpha|\Z \cong \Z/d\Z$, which implies $d=|\alpha|$.
\end{proof}

Let $d=\deg f=|\alpha|$ and set $r=\max\{\alpha_1,\ldots, \alpha_k\}$. Consider now 
\begin{equation}\label{Bneqn}
\matrB_n:=\{(a_1,\ldots,a_n)^{\top} \in\Z^n : a_1+\cdots+a_n=d,\, 0\leq a_1,\ldots,a_n\leq r\}.
\end{equation}
There is a natural bijection between elements of $\matrB_n$ and multisubsets of $[n]$ of cardinality $d$ with at most $r$ repetitions. Let $\Gamma_n$ be the set of such multisubsets. Every $\a=(\a_1,\ldots,\a_n)\in\matrB_n$ is in bijection with $\widetilde{\a}\in\Gamma_n$ via:
\begin{equation}\label{eq:multisubsets}
\a=(\a_1,\ldots,\a_n)\longleftrightarrow \widetilde{\a}=\{1^{\a_1},2^{\a_2},\ldots,n^{\a_n}\}.
\end{equation}

Let $\rtilde_n :=\K\left[X_{\Gamma_n}\right]$.  When $\matrB_n$ is viewed as a matrix with rows indexed by $t_i$ (for $i\in[n]$) and columns indexed by $x_{\tilde{\a}}$ (for $\widetilde{\a}\in\Gamma_n$), it defines a semigroup homomorphism that lifts to a homomorphism of $\K$-algebras:
$$\widetilde{\phi}_n:\rtilde_n\longrightarrow T_n.$$
By definition, $\matrA_n\subseteq \matrB_n$, and this inclusion gives an embedding $\eta: R_n \hookrightarrow \rtilde_n$. Also, $\widetilde{\phi}_n$ extends the map $\phi_n$ in the sense that $\phi_n =\widetilde{\phi}_n\circ \eta$. Therefore, we have the following commutative diagram:
$$
\xymatrix{
R_n \ar@{^{(}->}[r]^\eta \ar[d]_{\phi_n} & \widetilde{R}_n \ar[ld]^{\widetilde{\phi}_n}\\
 T_n
}
$$

\begin{ex}\label{example Bn}
Let $n=3$ and $\alpha=(2,1)$. Then $\rtilde_3=\K[x_{123}, x_{112}, x_{113}, x_{122}, x_{223}, x_{133}, x_{233}]$, and the following table
represents the matrix $\matrB_3$ associated to the homomorphism $\widetilde{\phi}_3$:
\begin{center}
\begin{tabular}{cccccccc}
 &$x_{123}$ & $x_{112}$ & $x_{113}$ & $x_{122}$ & $x_{223}$ & $x_{133}$ & $x_{233}$ \\
$t_1$ & 1 & 2 &2 &1& 0& 1& 0 \\
$t_2$ & 1 & 1 &0 &2 &2 &0 &1 \\
$t_3$ & 1 & 0 &1 &0 &1 &2 &2
\end{tabular}
\end{center}
\qed
\end{ex}

We next note the following fact, easily derived using \cite[Theorem 14.2]{SturmfelsGBCP}, as it provides a quadratic reduced Gr\"obner basis for $\widetilde{I}_n$. These generators can be obtain from the quadratic generators of any Gr\"obner basis for $\widetilde{I}_n$.

\begin{lem}\label{quadgeneration}
The ideal $\widetilde{I}_n\subseteq \rtilde_n$ is generated by the quadratic binomials of any Gr\"obner basis.
\end{lem}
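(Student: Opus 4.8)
The plan is to deduce quadratic generation of $\widetilde I_n$ from Sturmfels' theorem on configurations of Veronese type, and then to upgrade this to an \emph{arbitrary} Gr\"obner basis by a short homogeneity/degree‑counting argument.

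First I would identify the configuration $\matrB_n$ of (\ref{Bneqn}) as one \emph{of Veronese type} in the sense of \cite[Chapter 14]{SturmfelsGBCP}: its columns are exactly the lattice points of $\{a\in\R^n : a_i\ge 0,\ \sum_i a_i = d,\ a_i\le r\}$, i.e.\ the exponent vectors of the degree‑$d$ monomials in $t_1,\dots,t_n$ in which no variable occurs more than $r$ times. By \cite[Theorem 14.2]{SturmfelsGBCP}, such a configuration is sortable, so the binomials produced by the sorting operator form a reduced Gr\"obner basis of $\widetilde I_n$ (with respect to the sorting term order) consisting of quadratic binomials. In particular there is a finite set $\mathcal F$ of quadratic binomials with $\widetilde I_n=\langle\mathcal F\rangle_{\rtilde_n}$.

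Next I would record two structural facts. The ideal $\widetilde I_n$ is homogeneous for the standard grading on $\rtilde_n$: every column of $\matrB_n$ has coordinate sum $d$, so $\matrB_n a=\matrB_n b$ forces $d|a|=d|b|$, hence $|a|=|b|$, and therefore every binomial ${\bf x}^a-{\bf x}^b\in\widetilde I_n$ is homogeneous. Also, $\widetilde I_n$ contains no nonzero element of degree $\le 1$, since the columns of $\matrB_n$ are pairwise distinct (so no two of the variables $x_{\widetilde{\a}}$ have the same $\widetilde{\phi}_n$‑image). Since $\widetilde I_n$ is a toric ideal, its reduced Gr\"obner basis $\mathcal G$ with respect to any fixed term order consists of binomials, and by the first fact these are homogeneous, necessarily of degree $\ge 2$. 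Now the transfer: write $\mathcal G_2$ for the set of degree‑$2$ elements of $\mathcal G$ and fix $f\in\mathcal F$, which is homogeneous of degree $2$. The division algorithm of $f$ modulo $\mathcal G$ yields remainder $0$ (as $f\in\widetilde I_n$) and an identity $f=\sum_i m_i g_i$ with $g_i\in\mathcal G$ and $m_i$ terms. A degree‑$2$ polynomial cannot be reduced by a homogeneous element of degree $\ge 3$, and $\mathcal G$ has no element of degree $\le 1$; hence each $g_i$ used has degree exactly $2$ and each $m_i$ is a scalar, so $f$ lies in the $\K$‑span of $\mathcal G_2$. Thus $\mathcal F\subseteq\langle\mathcal G_2\rangle_{\rtilde_n}$, and combining with the first step, $\widetilde I_n=\langle\mathcal F\rangle_{\rtilde_n}\subseteq\langle\mathcal G_2\rangle_{\rtilde_n}\subseteq\widetilde I_n$, i.e.\ $\widetilde I_n=\langle\mathcal G_2\rangle_{\rtilde_n}$; the same conclusion holds for a non‑reduced Gr\"obner basis, whose degree‑$2$ part spans at least that of the reduced one.

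I expect no serious obstacle. The only point that needs care is the degree bookkeeping in the last step, namely the observation that reducing a homogeneous quadric modulo a homogeneous Gr\"obner basis of $\widetilde I_n$ can only invoke the degree‑$2$ members of that basis (a degree‑$\ge 3$ leading term cannot divide a quadric, and there are no degree‑$1$ members). Recognizing the Veronese‑type shape of $\matrB_n$, invoking Theorem 14.2, and checking homogeneity are all routine.
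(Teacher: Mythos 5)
Your proposal is correct and follows essentially the same route as the paper: both identify $\matrB_n$ as a configuration of Veronese type and invoke \cite[Theorem 14.2]{SturmfelsGBCP} to get a quadratic (sorting) Gr\"obner basis for $\widetilde{I}_n$, and then pass from this to the quadratic part of an arbitrary Gr\"obner basis. The only difference is that you spell out the homogeneity and degree-bookkeeping argument for that last transfer (which the paper merely asserts), which is a welcome bit of added rigor rather than a different approach.
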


In particular, since finite Gr\"obner bases always exist, $\widetilde{I}_n$ has a finite set of quadratic binomials generating it.

We now explain the key idea in our proof of Theorem \ref{mainthm2}.  Since the map $\widetilde{\phi}_n$ extends $\phi_n$, we have $I_n\hookrightarrow \widetilde{I}_n$.  Suppose that $\widetilde{I}_n = \< \widetilde{G}_n\>$ for some set $\widetilde{G}_n \subseteq \rtilde_n$, and that we can find a $\K$-algebra homomorphism $\mu$ making the following diagram commutative: 
\begin{equation}\label{maindiagram}
\xymatrix{
R_n \ar@{^{(}->}[rr] \ar@{^{(}->}[rd]^\eta \ar[rddd]_{\phi_n} && R_n^{\pm} \ar[lddd]^{\psi_n} 
\\ & \widetilde{R}_n \ar@{-->}[ru]^\mu \ar[dd]|{\widetilde{\phi}_n} \\ &\\
& T_n^{\pm}
}
\end{equation}
Then, as is easily checked, $\mu(\widetilde{G}_n)$ will be a generating set for $I^{\pm}_n$.  If, in addition, the $\widetilde{G}_n$ can themselves be finitely generated up to symmetry and $\mu$ is equivariant\footnote{The term \textit{equivariant} for the map $\mu$ signifies that $\mu(\sigma h) = \sigma \mu(h)$ for any $\sigma \in \symm_n$ and $h \in \widetilde{R}_n$.}, then we have generated the whole Laurent chain $I_{\circ}^{\pm}$ up to the symmetric group.  As the proof of the following proposition explains, the existence of such a $\mu$ is guaranteed by Lemma \ref{columnspan}.

\begin{prop}\label{mu_construction}
Fix $\alpha=(\alpha_1,\ldots,\alpha_k) \in\N^n$ and let $f={\bf y}^\alpha \neq 1$. For each $n > k$, there exists an equivariant $\K$-algebra homomorphism $\mu:\rtilde_n\rightarrow R_n^{\pm}$ that makes the diagram \eqref{maindiagram} commute. 
\end{prop}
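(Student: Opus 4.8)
The plan is to build $\mu$ by prescribing its value on each polynomial generator $x_{\widetilde\a}$ of $\rtilde_n$ (these are indexed by $\widetilde\a\in\Gamma_n$, equivalently by $\a\in\matrB_n$), then checking that the two triangles of \eqref{maindiagram} commute and that the assignment is $\symm_n$-equivariant; throughout I may assume $\gcd(\alpha)=1$ by the reduction made at the start of this section. First I would read off what the diagram demands. Writing $\pi\colon\Z^{\<n\>^k}\to\Z^n$ for the linear map sending the basis vector $e_w$ to the column of $\matrA_n$ indexed by $w$ (so $\pi(e_w)=\sum_{j=1}^k\alpha_j e_{w_j}$), commutativity of the bottom triangle $\psi_n\circ\mu=\widetilde\phi_n$ forces $\psi_n(\mu(x_{\widetilde\a}))={\bf t}^{\a}$, while commutativity of the top triangle forces $\mu\circ\eta$ to be the inclusion $R_n\hookrightarrow R_n^\pm$, i.e. $\mu(x_{\widetilde w})=x_w$ whenever the multiset $\widetilde w$ arises from a column $w\in\<n\>^k$ of $\matrA_n\subseteq\matrB_n$. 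So it suffices to attach to each $\a\in\matrB_n$ an integer vector $c(\a)\in\Z^{\<n\>^k}$ with $\pi(c(\a))=\a$, chosen so that $c(\a)=e_w$ when $\a$ is the column $w$, and so that the family $\{c(\a)\}$ is $\symm_n$-equivariant, i.e. $c(\sigma\a)=\sigma\, c(\a)$; then $\mu(x_{\widetilde\a}):={\bf x}^{c(\a)}$ will be the desired homomorphism.

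The existence of some $c(\a)$ with $\pi(c(\a))=\a$ is precisely Lemma \ref{columnspan}: since $|\a|=d=|\alpha|\equiv 0\pmod{|\alpha|}$, every $\a\in\matrB_n$ lies in $\spanZ(\matrA_n)$ (here the hypothesis $n>k$ is what makes Lemma \ref{columnspan} available), and when $\a$ is a column $w$ we take the tautological choice $c(\a)=e_w$. With $\mu$ defined on generators this way, the bottom triangle commutes because $\psi_n({\bf x}^{c(\a)})=\prod_w\psi_n(x_w)^{c(\a)_w}=\prod_w({\bf t}^{w})^{c(\a)_w}={\bf t}^{\sum_w c(\a)_w w}={\bf t}^{\a}=\widetilde\phi_n(x_{\widetilde\a})$, and two $\K$-algebra homomorphisms agreeing on the free generators of $\rtilde_n$ coincide; the top triangle commutes by the choice $c(w)=e_w$. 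So the entire content of the proposition lies in making the choice of $c(\a)$ equivariant.

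For that I would fix one representative $\a_0$ in each $\symm_n$-orbit of $\matrB_n$, choose $c(\a_0)$, and propagate by $c(\sigma\a_0):=\sigma\, c(\a_0)$; this is well defined exactly when $c(\a_0)$ is invariant under $\mathrm{Stab}_{\symm_n}(\a_0)$, which is the Young subgroup $\symm_{B_0}\times\cdots\times\symm_{B_r}$ determined by the level sets $B_v=\{i:(\a_0)_i=v\}$ (the columns of $\matrA_n$ form a single $\symm_n$-orbit, on which the assignment $w\mapsto e_w$ is already equivariant, so this is consistent). Hence \textbf{the main obstacle is to produce, for such an $\a_0$, an integer representation $\a_0=\pi(c)$ that is invariant under this Young subgroup.} Starting from any representation handed to us by Lemma \ref{columnspan}, the natural remedy is to replace $c$ by its average $\frac{1}{|\mathrm{Stab}(\a_0)|}\sum_{\tau\in\mathrm{Stab}(\a_0)}\tau\, c$, which still maps to $\a_0$ under $\pi$ and is now stabilizer-invariant; this completes the construction over a field of characteristic zero. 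Making the argument characteristic-free requires choosing the representation carefully enough that the averaging denominator can be cleared inside the appropriate coset of $\ker\pi$, and I expect this integrality bookkeeping to be the technical heart of the proof. Once $\{c(\a)\}$ is equivariant, $\mu$ is equivariant since $\mu(\sigma x_{\widetilde\a})=\mu(x_{\sigma\widetilde\a})={\bf x}^{c(\sigma\a)}={\bf x}^{\sigma c(\a)}=\sigma{\bf x}^{c(\a)}=\sigma\mu(x_{\widetilde\a})$, and it is a $\K$-algebra homomorphism because $\rtilde_n$ is a polynomial ring.
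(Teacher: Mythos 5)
Your construction of $\mu$ on the generators of $\rtilde_n$ coincides with the paper's: use Lemma \ref{columnspan} to produce an integral preimage of each $\a\in\matrB_n$ under the map $\pi$ sending $e_w$ to the corresponding column of $\matrA_n$, send $x_{\widetilde\a}$ to the associated monomial quotient (and to $x_w$ itself when $\a$ is a column of $\matrA_n$), and check both triangles of \eqref{maindiagram}; that part is fine. The gap is in the program you then declare to be ``the entire content of the proposition'', namely making the exponent choice $c(\a)$ strictly $\symm_n$-equivariant. First, the averaging step is broken as written: $\frac{1}{|\mathrm{Stab}(\a_0)|}\sum_{\tau}\tau c$ lies in $\Q^{\<n\>^k}$, and a fractional exponent vector does not define an element of $R_n^{\pm}$ over \emph{any} field --- the characteristic of $\K$ concerns coefficients, not exponents, so this is not a characteristic-zero fix. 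Second, the ``integrality bookkeeping'' you defer cannot in general be carried out, because a stabilizer-invariant integral preimage need not exist. Take $\alpha=(2,1)$, $n=6$, and $\a_0=(1,1,1,0,0,0)$ (the variable $x_{\{1,2,3\}}$): any vector $c\in\Z^{\<6\>^2}$ invariant under $\mathrm{Stab}(\a_0)=\symm_{\{1,2,3\}}\times\symm_{\{4,5,6\}}$ is constant, with values $p,q,r,s$, on the four stabilizer orbits of $\<6\>^2$, and the first coordinate of $\pi(c)$ is then $6p+6q+3r\equiv 0\pmod 3$, which can never equal $1$. Hence no monomial-valued $\mu$ with $c(\sigma\a)=\sigma c(\a)$ exists in this case, and the route you propose is a dead end, not merely technically unfinished.

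For comparison, the paper's proof does not attempt invariant choices at all: it defines $\mu(x_{\widetilde\a})$ by an \emph{arbitrary} representation supplied by Lemma \ref{columnspan} (and by $\eta^{-1}$ on variables coming from $R_n$), verifies only the commutativity $\psi_n\circ\mu=\widetilde\phi_n$, and immediately remarks that $\mu$ depends on the chosen representations; the equivariance asserted in the statement is not derived from invariance of the exponent choices. If you do want literal equivariance, the workable variant of your idea is to average the \emph{monomials} ${\bf x}^{\tau c}$ over $\tau\in\mathrm{Stab}(\a_0)$ rather than the exponent vectors: this produces a stabilizer-invariant element of $R_n^{\pm}$ with the correct image under $\psi_n$, at the cost of non-monomial values and of the hypothesis that $\mathrm{char}\,\K$ does not divide $|\mathrm{Stab}(\a_0)|$. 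So the portion of your argument overlapping the paper is correct, but the equivariance step --- which you correctly identify as the delicate point, and which you leave open --- is a genuine gap, and your specific plan for closing it fails.
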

\begin{proof}
Consider a multisubset $\widetilde{\a}\in\Gamma_n$.  If $x_{\tilde{\a}}\in \eta(R_n)\subseteq \rtilde_n$, then define $\mu(x_{\tilde{\a}}) :=\eta^{-1}(x_{\tilde{\a}})$. 
Assume $x_{\tilde{\a}}\notin \eta(R_n)$. Since $\widetilde{a}$ is in bijection with $a\in\mathcal B_n$ as in \eqref{eq:multisubsets}, we have $|a| = |\alpha|$. By Lemma \ref{columnspan}, we can find integers $B = \{b_1,\ldots,b_M\} \subset \Z$ such that 
$$
a=\sum_{i=1}^M b_i u_i,
$$
with $M={n \choose k}k!$ and $u_i \in \matrA_n$. Let $B^+=\{b_i\in B : b_i\in \Z_{>0}\}$ and $B_-=\{b_i\in B : b_i\in\Z_{<0}\}$.  Consider the fraction 
\begin{equation}\label{fraction}
\mathfrak q:= \frac{\prod_{b_i \in B^+} x_{u_i}^{b_i}}{\prod_{b_i \in B_{-}} x_{u_i}^{-b_i}}.
\end{equation}
Clearly $\mathfrak q \in R_n^{\pm}$, so we can define $\mu(x_{\tilde{\a}}) := \mathfrak q \in R_n^{\pm}$.

Extend $\mu$ to $\rtilde_n$ by linearity. 
By construction, $\mu$ makes the diagram (\ref{maindiagram}) commute since for $\atilde\in\Gamma_n$, one can verify that $\psi_n(\mu(x_{\atilde}))=\widetilde{\phi}_n(x_{\atilde})=\prod_{i=1}^n t_i^{\a_i}$.
\end{proof}

\begin{rem}
The above construction of $\mu$ is not necessarily unique as it depends on the representation of $\mathfrak q$.
\end{rem}

\begin{ex}\label{ex fraction}
Continuing from Example \ref{integer combination}, we want to map $x_{123}\in\rtilde_3$ to a fraction in $R_3^{\pm}$ that only involves the indeterminates of $\rtilde_3$ corresponding to those of $R_3$:
$$
\mu(x_{123})=\frac{x_{112}x_{331}}{x_{113}}.
$$
We also want this fraction to have the same image under $\psi_3$ as $x_{123}$ has under $\widetilde{\phi}_3$.  Indeed, we have $\widetilde{\phi}_3(x_{123})=t_1t_2t_3$ and 
\begin{eqnarray*}
\psi_3(\mu(x_{123})) = \frac{\psi_3(x_{112}x_{331})}{\psi_3(x_{113})} = \frac{\phi_3(x_{12})\phi_3(x_{31})}{\phi_3(x_{13})} = \frac{(t_1^2t_2)(t_3^2t_1)}{(t_1^2t_3)} = t_1t_2t_3.
\end{eqnarray*}
\qed
\end{ex}

We are finally in position to prove Theorem \ref{mainthm2}.

\begin{proof}[Proof of Theorem \ref{mainthm2}]
Let $\widetilde{I}_n=\ker\widetilde{\phi}_n$; this ideal is generated by binomials of the form
$$
x_{\tilde a}x_{\tilde b}\cdots x_{\tilde c}-x_{\tilde a'}x_{\tilde b'}\cdots x_{\tilde c'},
$$
in which $\atilde\cup\btilde\cdots\cup{\gtilde}=\atilde'\cup\btilde'\cdots\cup{\gtilde'}$ as a union of multisets \cite[Remark 14.1]{SturmfelsGBCP}. 
From Lemma \ref{quadgeneration}, there is a finite generating set $\mathcal{G}_n$ of $\widetilde{I}_n$ consisting of quadratic binomials. Let $G_n$ be a finite set of generators for $I_n$. Note that $\eta(I_n)\subseteq \widetilde{I}_n$ and so $\eta(G_n)\subseteq \widetilde{I}_n$.  For $g\in G_n$, we can write 
\begin{equation}\label{imageofeta}
\eta(g)=\sum_{\tilde{p}\in\mathcal{G}_n} h_{\tilde{p}} \widetilde{p}, \hspace{5pt}\text{ with }h_{\tilde{p}}\in\rtilde_n.
\end{equation}
We know $G_n$ is a generating set for $I_n^{\pm}$, but we give another generating set for $I_n^\pm$ in terms of $\mathcal{G}_n$. 

Applying the map $\mu$ from Proposition \ref{mu_construction} to both sides of expression \eqref{imageofeta}, we have
$$
g = \mu(\eta(g)) = \sum_{\tilde{p}\in\mathcal{G}_n} \mu(h_{\tilde{p}}) \mu(\widetilde{p}).
$$
Moreover, $\mu(\widetilde{p})\in I_n^\pm$.  It follows that $I_n^\pm=\langle \mu(\widetilde{p}) : \widetilde{p}\in \mathcal{G}_n \rangle_{R_n^{\pm}}$. Since $\widetilde{p}\in \mathcal{G}_n$ is a quadratic binomial, 
\[
\widetilde{p} = x_{\tilde a}x_{\tilde b} - x_{\tilde a'}x_{\tilde b'}, \ \  \text{with $\atilde\cup \btilde = \atilde'\cup \btilde'$ as multisets}.
\] 
The cardinality of each of $\atilde,\, \btilde$ is $d=|\alpha|$, and so the number of distinct numbers in $\atilde\cup \btilde$ is at most $2d$.
In particular, $\mu(\widetilde{p})\in   \<\symm_n I_{2d}^\pm\>_{R_n^\pm}$ for $n\geq 2d$. 
Thus, $I_\circ^\pm$ stabilizes with bound $N=2d$. 
\end{proof}

\begin{ex}
Continuing with Example \ref{ex:2ndThm}, let $g=x_{3 9}x_{7 9} - x_{37} x_{97} \in I_9$. Under the inclusion $\eta : R_6\hookrightarrow \widetilde R_6$, we have $\eta (g) = x_{339}x_{779} - x_{337}x_{799}$. From Lemma \ref{quadgeneration}, the ideal $\widetilde I_9$ is generated by quadratic binomials. We can write $\eta(g)$ in terms of those generators; in this case,
$$
\eta(g) = (x_{339}x_{779} - x_{379}^2) - (x_{337}x_{799} - x_{379}^2)\in \widetilde I_9.
$$
Let $\widetilde p_1 :=x_{337}x_{779} - x_{379}^2$ and $\widetilde p_2:=x_{337}x_{799} - x_{379}^2$. We have $\widetilde p_1 = \sigma \widetilde q_1$ and $\widetilde p_2 = \sigma \widetilde q_2$ for the following $\widetilde q_1, \widetilde q_2 \in \widetilde{I}_6$ (actually $\widetilde{I}_3$ in this case) and $\sigma = (1\, 3\, 9)(2\, 7) \in \symm_9$:
\[ \widetilde q_1 = x_{113}x_{223} - x_{123}^2, \ \   \widetilde q_2 = x_{112}x_{233} - x_{123}^2.\]
Thus, $\mu(\widetilde p_1) =  \sigma \mu(q_1)$ and   $\mu(\widetilde p_2) =  \sigma \mu(q_2)$ since $\mu$ is equivariant 
 and  $\mu(q_1)$ and $ \mu(q_2)$ generate $g$ up to symmetry:
\[ g=   \sigma \left(x_{12}x_{23}-\frac{x_{12}^2x_{31}^2}{x_{13}^2}\right) - \sigma \left(x_{12}x_{32}-\frac{x_{12}^2x_{31}^2}{x_{13}^2}\right).\]
\qed
\end{ex}

\subsection{Toric ideals induced by $y_1^2 y_2$}\label{sec:themainexample}

Theorem \ref{mainthm2} provides evidence that chains of ideals induced by monomials stabilize. The simplest (unknown) case is when $ f=y_1^2 y_2$. Here, we present an explicit computation of the generators for the corresponding Laurent chain that is different from Algorithm \ref{algorithmourthm}.  We hope to illustrate some of the complexity of the general problem and also to elaborate on other approaches for tackling Conjecture \ref{conjwmonomial}.

For $n\geq 2$, let $I_n$ be the toric ideal induced by the monomial $y_1^2y_2$. Let $\matrA_n\in\Z^{n\times {n\choose k}k!}$ be the matrix that defines the semigroup homomorphism $\phi_n$ such that $I_n=\ker \phi_n$ (recall Definition \ref{ker_ind_poly}). 
For example, when $n=5$ we have
 $$
 \matrA_5=\left(
{\small 
\begin{array}{cccccccccccccccccccc}
 2 & 2 & 2 & 2 & 1 & 1 & 1 & 1 & 0 & 0 & 0 & 0 & 0 & 0 & 0 & 0 & 0 & 0 & 0 & 0 \\
 1 & 0 & 0 & 0 & 2 & 0 & 0 & 0 & 2 & 2 & 2 & 1 & 1 & 1 & 0 & 0 & 0 & 0 & 0 & 0 \\
 0 & 1 & 0 & 0 & 0 & 2 & 0 & 0 & 1 & 0 & 0 & 2 & 0 & 0 & 2 & 2 & 1 & 1 & 0 & 0 \\
 0 & 0 & 1 & 0 & 0 & 0 & 2 & 0 & 0 & 1 & 0 & 0 & 2 & 0 & 1 & 0 & 2 & 0 & 2 & 1 \\
 0 & 0 & 0 & 1 & 0 & 0 & 0 & 2 & 0 & 0 & 1 & 0 & 0 & 2 & 0 & 1 & 0 & 2 & 1 & 2
\end{array}
}
 \right).
 $$

When the columns of $\matrA_n$ are ordered lexicographically, a basis for $\ker_{\Z}(\matrA_n)$ as a $\Z$-module can be described as follows:
\begin{equation}\label{matrixBlock}
\ker_{\Z}(\matrA_n) = \left(\begin{array}{c} \mathcal{K}_n\\ \mathcal{I}_{d-n} \end{array}\right),
\end{equation}
where $\mathcal{I}_{d-n}$ is the $(d-n)\times(d-n)$ identity matrix and $\mathcal{K}_n$ is a matrix whose structure we now describe.  Let $c_r\in\Z^{n-2}$ be the row vector whose entries are all equal to $r$. Then,
$$
\mathcal{K}_n = \left( \begin{array}{cccc} \mathcal{L}_1 & \mathcal{L}_2 & \mathcal{L}_3 & \mathcal{L}_4
\end{array}
 \right),
$$
in which

\[
-\mathcal{L}_1 = \left( \begin{array}{c} 
c_{-2} \\
2\cdot \mathcal{I}_{n-2} \\
c_{1}
 \end{array}
 \right), 
\hspace{.2cm}
-\mathcal{L}_2 = \left( \begin{array}{c} 
c_{-2}\\
 \mathcal{I}_{n-2} \\
c_{2} 
 \end{array}
 \right),
\hspace{.2cm}
-\mathcal{L}_3 = \left( \begin{array}{c} 
c_{-3}\\
2\cdot \mathcal{I}_{n-2} \\
c_{2} 
 \end{array}
 \right),
 \hspace{.2cm}
-\mathcal{L}_4 = \left( \begin{array}{c} 
c_{-4}\\
\matrA_{n-2} \\
c_{2} 
 \end{array}
 \right).
 \] 
 For instance, when $n=5$, the integer kernel of $\matrA_5$ has the following $\Z$-basis
 $$
 \ker_{\Z}(\matrA_5) = \left(
{
\renewcommand{\arraystretch}{0.8}
\begin{array}{rrrrrrrrrrrrrrr}
 2 & 2 & 2 & 2 & 2 & 2 & 3 & 3 & 3 & 4 & 4 & 4 & 4 & 4 & 4 \\
 -2 & 0 & 0 & -1 & 0 & 0 & -2 & 0 & 0 & -2 & -2 & -1 & -1 & 0 & 0 \\
 0 & -2 & 0 & 0 & -1 & 0 & 0 & -2 & 0 & -1 & 0 & -2 & 0 & -2 & -1 \\
 0 & 0 & -2 & 0 & 0 & -1 & 0 & 0 & -2 & 0 & -1 & 0 & -2 & -1 & -2 \\
 -1 & -1 & -1 & -2 & -2 & -2 & -2 & -2 & -2 & -2 & -2 & -2 & -2 & -2 & -2 \\
 1 & 0 & 0 & 0 & 0 & 0 & 0 & 0 & 0 & 0 & 0 & 0 & 0 & 0 & 0 \\
 0 & 1 & 0 & 0 & 0 & 0 & 0 & 0 & 0 & 0 & 0 & 0 & 0 & 0 & 0 \\
 0 & 0 & 1 & 0 & 0 & 0 & 0 & 0 & 0 & 0 & 0 & 0 & 0 & 0 & 0 \\
 0 & 0 & 0 & 1 & 0 & 0 & 0 & 0 & 0 & 0 & 0 & 0 & 0 & 0 & 0 \\
 0 & 0 & 0 & 0 & 1 & 0 & 0 & 0 & 0 & 0 & 0 & 0 & 0 & 0 & 0 \\
 0 & 0 & 0 & 0 & 0 & 1 & 0 & 0 & 0 & 0 & 0 & 0 & 0 & 0 & 0 \\
 0 & 0 & 0 & 0 & 0 & 0 & 1 & 0 & 0 & 0 & 0 & 0 & 0 & 0 & 0 \\
 0 & 0 & 0 & 0 & 0 & 0 & 0 & 1 & 0 & 0 & 0 & 0 & 0 & 0 & 0 \\
 0 & 0 & 0 & 0 & 0 & 0 & 0 & 0 & 1 & 0 & 0 & 0 & 0 & 0 & 0 \\
 0 & 0 & 0 & 0 & 0 & 0 & 0 & 0 & 0 & 1 & 0 & 0 & 0 & 0 & 0 \\
 0 & 0 & 0 & 0 & 0 & 0 & 0 & 0 & 0 & 0 & 1 & 0 & 0 & 0 & 0 \\
 0 & 0 & 0 & 0 & 0 & 0 & 0 & 0 & 0 & 0 & 0 & 1 & 0 & 0 & 0 \\
 0 & 0 & 0 & 0 & 0 & 0 & 0 & 0 & 0 & 0 & 0 & 0 & 1 & 0 & 0 \\
 0 & 0 & 0 & 0 & 0 & 0 & 0 & 0 & 0 & 0 & 0 & 0 & 0 & 1 & 0 \\
 0 & 0 & 0 & 0 & 0 & 0 & 0 & 0 & 0 & 0 & 0 & 0 & 0 & 0 & 1
\end{array}
}
\right).
$$

For each $n$, the elements of $\ker_{\Z}(\matrA_n)$ are $\Z$-linear combinations of the columns of the matrix \eqref{matrixBlock}. For each $i=1,\ldots,4$, we can realize the columns of $\mathcal{L}_i$ as the first column of $\mathcal{L}_i$ after applying a permutation $\sigma\in\symm_n$ to it. For instance, when $n=5$, the first column of $\mathcal{L}_1$ is the vector $(2,-2,0,0,-1,1,0,\ldots,0)^{\top}\in\Z^{20}$, which corresponds to the binomial $x_{12}^2 x_{31} - x_{13}^2 x_{21}$. If we apply the transposition $(3\, 5)\in \symm_5$ to this element, we get $x_{12}^2 x_{51} - x_{15}^2 x_{21}$, whose corresponding integer vector is precisely the third column of $\mathcal{L}_1$; namely, $(2,0,0,-2,-1,0,0,1,0,\ldots,0)^{\top}\in\Z^{20}$.

In general, for every $n$ and for $i=1,2,3$, the transposition $(3 \, j)$ with $4\leq j \leq n$ applied to the binomial corresponding to the first column of $\mathcal{L}_i$ will be equal to the binomial whose support corresponds to the $(j{-}2)$-th column of $\mathcal{L}_i$. For $\mathcal{L}_4$, instead of transpositions, we use those permutations that send the pair $(3,4)$ to $(i,j)$ for $3\leq i\neq j \leq n$ to write those binomials corresponding to the columns of $\mathcal{L}_4$ in terms of the first column of $\mathcal{L}_4$.  For instance, the binomial $x_{12}^4 x_{34} - x_{14} x_{13}^2 x_{21}^2$ has support the first column of $\mathcal{L}_4$.  When we apply the permutation $(3 \ 4 \ 5)\in\symm_5$ to this binomial, we get $x_{12}^4 x_{45} - x_{15} x_{14}^2 x_{21}^2$, which has support the 5th column of $\mathcal{L}_4$.  Consider the set
\[H^{\pm}  =\{ x_{12}^2 x_{31} - x_{13}^2 x_{21}, x_{12}^2 x_{23} - x_{13} x_{21}^2, x_{12}^3 x_{32} - x_{13}^2 x_{21}^2 , x_{12}^4 x_{34} - x_{13}^2 x_{14} x_{21}^2\}\] of binomials corresponding to the first column of each $\mathcal L_i$. The action of $\symm_5$ on $H^{\pm}$ produces generators for the Laurent ideal $I_5^\pm$ corresponding to the toric ideal $I_5$, by Lemma \ref{laurentLemma}.  In general for $n\geq 5$, the action of $\symm_n$ on $H^{\pm}$ produces generators for the Laurent ideal $I_n^\pm$. We thus obtain a generating set for the chain $I_{\circ}^\pm$ that depends only on the description of $\ker_{\Z}(\matrA_n)$ and is independent from the methods used in the proof of Theorem \ref{mainthm2}. 

Unfortunately, we could not generalize this technique to other cases as the combinatorics that describe $\ker_{\Z}(\matrA_n)$ in general becomes more complicated.  We also remark that the set $H^{\pm}$ fails to be a generating set for the (non-Laurent) chain of ideals induced by $y_1^2y_2$. 

\section{Algorithms}\label{algorithms}

The proof of Theorem \ref{mainthm2} suggests an algorithm to find the generators of a chain of Laurent toric ideals induced by a monomial ${\bf y}^\alpha$. We stated the existence of such an algorithm in Theorem \ref{ouralgthm} from the introduction. In this section we describe this algorithm and argue its correctness. A full implementation in \texttt{Macaulay2} \cite{M2} can be found in~\cite{WWW_ourcode}.

\begin{algorithm}[h!]
\caption{[Theorem \ref{ouralgthm}]}
\label{algorithmourthm}
\begin{algorithmic}[1]
\REQUIRE {Exponent vector $\alpha\in\N^k$}
\ENSURE {Generators for the chain of Laurent ideals defined by ${\bf y}^\alpha$ up to
symmetry}
\STATE $d := 2 |\alpha|$
\STATE Compute the matrix $\matrB_{d}$ (\ref{Bneqn})
\STATE\label{computegroebner} Compute the Gr\"{o}bner basis $\mathcal{G}$ of the toric ideal
$I_{\matrB_d}$
\FORALL {$g \in \mathcal{G}$}
   \FORALL {indeterminates $x_w$ in $g$}
       \IF {$x_w$ is not indexed by a permutation of $\alpha$}
           \STATE\label{createmu} $g =$ replace $x_w$ in $g$ by the monomial quotient $\mu(x_w)$
       \ENDIF
   \ENDFOR
\ENDFOR
\STATE\label{removeorbit}{ Remove redundant generators from $\mathcal{G}$}
\RETURN $\mathcal{G}$
\end{algorithmic}
\end{algorithm}

Given an exponent vector $\alpha\in \N^k$, the algorithm computes a set of generators for the chain of Laurent ideals defined by ${\bf y}^\alpha$ up to the action of the symmetric group. In the first steps, it considers all the integer partitions of $d =2 |\alpha|$ with parts at most $\max \alpha := \max \{\alpha_1,\ldots,\alpha_n\}$, and then constructs the matrix $\matrB_d$ by taking all the permutations of such partitions.

In step \ref{computegroebner}, the algorithm constructs the toric ideal $I_d$ that corresponds to the matrix $\matrB_d$ and computes its Gr\"{o}bner basis $\mathcal{G}$ (with respect to any term order). This Gr\"{o}bner basis computation is the most expensive step for large ideals. We decided to use the \texttt{Macaulay2} package \texttt{FourTiTwo}, which invokes one of the fastest routines, \texttt{4ti2}, specializing in computing Gr\"{o}bner bases for toric ideals \cite{4ti2}.

Step \ref{removeorbit} removes the redundant generators from $\mathcal{G}$. Using Lemma \ref{quadgeneration}, we start by removing all the non-quadratic generators from $\mathcal{G}$. We then remove the symmetric orbit of each of the remaining generators. To illustrate how drastically the number of generators is decreased after this step, consider once more the running example of Section \ref{stablaurchainmon}. When ${\bf y}^\alpha = y_1^2 y_2$, the Laurent toric chain has a stabilization bound at $n=6$; for this value of $n$, the toric ideal $I_6\subset R_6$ has $270$ minimal generators. When we lift to the ideal $\widetilde{I}_6 \subset \widetilde{R}_6$, we obtain $849$ minimal generators, but only $13$ modulo the action of the symmetric group. From those, we find that $11$ generate the corresponding Laurent ideal modulo the symmetric group. But after clearing denominators and common monomial factors, we found that only $8$ from those $11$ (exactly those $8$ that are presented in the introduction of Section \ref{stablaurchainmon}) form a generating set of the Laurent ideal $I_6^{\pm}$ modulo the action of the symmetric group. Since the number of generators increase when passing to the ring $\rtilde_n$, one way to improve speed on this orbit removal step is to remove the orbit after Step \ref{computegroebner} and again in Step \ref{removeorbit}.

The core of our algorithm is Step \ref{createmu}, where we turn Lemma \ref{columnspan} and Proposition \ref{mu_construction} into a computational tool. We unfold this step in Algorithm \ref{proc:constructmu} below.  This algorithm expresses every element of the column span of $\matrB_d$ as a linear combination of the column span of the matrix $\matrA_d$ using the construction found in the proof of Lemma \ref{columnspan}, and detailed in Algorithm \ref{proc:integerdecomposition} below. This integer decomposition is then used to create the map $\mu$ in (\ref{com_diagram}).

\begin{algorithm}[h!]
\floatname{algorithm}{Algorithm}
\caption{Construction of the map $\mu$}
\label{proc:constructmu}
\begin{algorithmic}[1]
\REQUIRE {indeterminate $x_w$, exponent vector $\alpha\in\N^k$ with $\gcd(\alpha)=1$}
\ENSURE {monomial quotient $\mu(x_w)$}
\STATE {Set $V:=\{\sigma \alpha :  \sigma\in\symm_n\}$}
\IF {$w\notin V$}
	\STATE {Write $w=b_1v_1+\cdots+b_rv_r$ with $b_i\in \Z$ and $v_i\in V$ for all $i\in[r]$ (Lemma \ref{columnspan})}
	\STATE {$indexB_+:=\{i\in[r] :  b_i>0\}$ and $indexB_-:=\{i\in[r] :  b_i<0\}$}
	\STATE {$numerator:=1$ and $denominator:=1$}
	\FORALL {$i\in indexB_+$}
		\STATE {$numerator = numerator \cdot x_{v_i}^{b_i}$ }
	\ENDFOR
	\FORALL {$i\in indexB_-$}
		\STATE{ $denominator=denominator\cdot x_{v_i}^{-b_i}$}
	\ENDFOR
	\RETURN {$numerator/denominator$}
	\ELSE
		\RETURN {$x_w$}
\ENDIF
\end{algorithmic}
\end{algorithm}

\begin{algorithm}[h!]
\floatname{algorithm}{Algorithm}
\caption{Integer decomposition of $\beta$ in terms of $\alpha$}
\label{proc:integerdecomposition}
\begin{algorithmic}[1]
\REQUIRE {Integer vector $\beta\in\Z^n$, exponent vector $\alpha\in\N^k$ with $\gcd(\alpha)=1$ and $|\alpha|$ dividing $|\beta|$}
\ENSURE {List $\{\{a_\sigma,v_\sigma\}: \sigma\in\symm_n\}$ such that $\beta=\sum_{\sigma\in\symm_n} a_\sigma v_\sigma$}, where $v_\sigma = \sigma\cdot \alpha$, and $a_\sigma\in \Z$
\STATE {Write $1=b_1\alpha_1+\cdots +b_k\alpha_k$, where $\alpha=(\alpha_1,\ldots, \alpha_k)$ and $b_i\in\Z$ }
\STATE {$q:=|\beta|/|\alpha|$}
\STATE {$L:=\{\{q, v_\alpha\}\}$, where $v_\alpha = (\alpha_1,\ldots,\alpha_k,0,\ldots,0)\in\Z^n$}
\FOR{$j$ from 1 to $n-1$}
	\IF {$\beta_j-\alpha_j\cdot q \neq0$}
		\FOR{$i$ from 1 to $k$}
			\STATE{$L=L\cup\{ \{ (\beta_j-\alpha_j\cdot q)b_i, v_{\sigma_{ij}} \}, \{-b_i(\beta_j-\alpha_j\cdot q), v_{\tau_j\sigma_{ij}}\} \}$, where $\sigma_{ij}$ and $\tau_j$ are as in the proof of Lemma \ref{columnspan}}
		\ENDFOR
	\ENDIF
\ENDFOR
\RETURN {$L$}
\end{algorithmic}
\end{algorithm}

We remark that in the union computation in Step 7 of Algorithm \ref{proc:integerdecomposition}, we add coefficients of matching pairs as in $\{a_\sigma,v_\sigma\} \cup \{b_\sigma,v_\sigma\} = \{a_\sigma+b_\sigma,v_\sigma\}$.

\section{Open problems and conjectures}\label{openprobconj}

Stabilization of chains of ideals is unexpected and important for applications. However, the problem of deciding whether a chain is stable under the action of a group seems difficult, even for the special case of the symmetric group. In this section, we present some conjectures based on computational evidence. We focus first when the ideals comprising the chain are toric ideals as they tend to have rich combinatorial structure; later, we turn to a more general setting and close with some problems that develop this topic further.

Motivated by the study of bounds on the Castelnuovo-Mumford regularity in algebraic geometry, Bayer and Mumford introduced in \cite{BM93} the \emph{degree-complexity} of a homogeneous ideal $I$ with respect to a term order $\preceq$ as the maximal degree in a reduced Gr\"obner basis of $I$, and this is the largest degree of a minimal generator of $\text{in}_{\preceq}(I)$. In our context, degree-complexity is important because it is closely related to stabilization of chains of ideals. For instance, in the proof of Theorem \ref{mainthm2}, we exploited the fact that the ideal $\widetilde{I}_n$ is binomial and has degree-complexity 2 for every $n$. On the other hand, if the ideals comprising a chain induced by a monomial do not have a degree-complexity bound, then stabilization is unlikely.


We pose the following problems based on our observations in Table \ref{degreechart} (computed using our software~\cite{WWW_ourcode}).

\begin{conj}
Let $\alpha=(\alpha_1,\alpha_2)$ with $\gcd(\alpha_1,\alpha_2)=1$ (suppose $\alpha_1\geq \alpha_2$). The degree-complexity of $I_n$ is of the form $2 \alpha_1-\alpha_2$ for all (but possible finitely many) ideals $I_n$ in the chain of (non-Laurent) toric ideals induced by the monomial ${\bf y}^\alpha$. 
\end{conj}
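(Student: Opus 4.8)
The plan is to combine an explicit construction of a Gr\"obner basis for $I_n$ (for large $n$) with a stabilization argument that propagates the bound $2\alpha_1 - \alpha_2$ to all but finitely many members of the chain. First I would fix a convenient term order --- say, a degree-reverse-lexicographic order adapted to the symmetric-group symmetry --- and try to produce a ``staircase'' family of binomials in $I_n$ that are forced to appear in any reduced Gr\"obner basis. The lower bound is the easier half: because ${\bf y}^\alpha = y_1^{\alpha_1}y_2^{\alpha_2}$ with $\alpha_1 \geq \alpha_2$ and $\gcd(\alpha_1,\alpha_2)=1$, one constructs an explicit binomial relation of degree $2\alpha_1 - \alpha_2$ among the $x_{(i,j)}$ realizing a given monomial in the $t$'s in two essentially different ways (this is the natural generalization of the relation $x_{13}x_{21}^2 - x_{12}^2x_{23}$ seen in Example~\ref{example An}, where $\alpha=(2,1)$ and $2\alpha_1-\alpha_2 = 3$), and then argues via the structure of $\ker_\Z(\matrA_n)$ (as in Lemma~\ref{columnspan}, and the block description in Section~\ref{sec:themainexample}) that no lower-degree relation can reduce its leading term. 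The upper bound --- that \emph{every} element of a reduced Gr\"obner basis has degree at most $2\alpha_1 - \alpha_2$ --- is where the real work lies: I would attempt an $S$-pair/Buchberger-style analysis, showing that all $S$-polynomials among the conjectured generators reduce to zero, using the combinatorial interpretation of binomials in toric ideals (\cite[Remark 14.1]{SturmfelsGBCP}) to track multiset overlaps and bound the degrees of the binomials produced during reduction.

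A second, more structural approach would exploit the invariant-chain framework of this paper directly. Since $I_n$ is defined by the equivariant maps $\phi_n$, the degree-complexity, if it stabilizes, should be a function only of $\alpha$; so one would want a ``local stabilization'' statement for Gr\"obner bases analogous to Theorems~\ref{laurentlatticethm} and~\ref{mainthm2}, but for the \emph{non}-Laurent chain and at the level of initial ideals rather than ideals. The idea is: (i) show that the initial ideals $\mathrm{in}_\preceq(I_n)$ form an invariant chain of monomial ideals; (ii) invoke a Noetherianity-up-to-symmetry result (in the spirit of Theorem~\ref{mainNoethThm}, applied to monomial generators) to conclude that the chain of initial ideals stabilizes, so that the set of degrees of minimal generators of $\mathrm{in}_\preceq(I_n)$ is eventually constant in $n$; (iii) pin down that constant as $2\alpha_1 - \alpha_2$ by the explicit degree-$(2\alpha_1-\alpha_2)$ witness from the lower-bound construction together with a uniform degree bound coming from the Veronese/$\widetilde{R}_n$ embedding of Section~\ref{stablaurchainmon} (which controls $\widetilde I_n$ in degree $2$ but blows up the ambient ring, so one must carefully re-descend the degree estimate through $\eta$). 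The phrase ``all (but possibly finitely many)'' in the statement strongly suggests that small $n$ (where $\matrB_n$ and $\matrA_n$ coincide or nearly coincide, i.e.\ $n < 2|\alpha|$ or so) genuinely behave differently, so the proof should only claim the bound for $n$ past an effective threshold.

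The main obstacle, I expect, is the upper bound on degree-complexity: it is precisely the assertion that stabilization ``works'' for the non-Laurent chain in this family, which the paper itself flags as open (the Remark after Example~\ref{ex:2ndThm}, and Conjecture~\ref{conjwmonomial}). In particular, Corollary~\ref{not-normal} shows these toric ideals are not normal, which obstructs the cleanest available degree bounds (e.g.\ those that would follow from normality or from a quadratic/Koszul structure, neither of which holds here). So any complete proof will have to contend with the non-normality head-on --- either by a delicate hand-analysis of $S$-pair reductions that exploits the very rigid structure of $\ker_\Z(\matrA_n)$ (the $c_{-2}, c_{-3}, c_{-4}$ pattern in $\mathcal{K}_n$ of Section~\ref{sec:themainexample}, generalized to arbitrary $(\alpha_1,\alpha_2)$), or by establishing the Gr\"obner-basis stabilization principle (ii) above as a genuinely new finiteness theorem. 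Given the difficulty, a realistic intermediate target is to prove the lower bound unconditionally and to prove the full equality conditionally on stabilization of the chain, or to settle the first open case $\alpha=(2,1)$ completely (where $2\alpha_1-\alpha_2 = 3$) and the next case $\alpha=(3,1)$ (predicted value $5$) by direct computation extended to a proof via the explicit kernel description.
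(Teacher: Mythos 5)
There is nothing in the paper to check your argument against: the statement you were given is posed there as a \emph{conjecture}, supported only by the computational evidence in Table 1, and the paper itself emphasizes (in the remark after Example \ref{ex:2ndThm} and in Conjecture \ref{conjwmonomial}) that even stabilization of the non-Laurent chain induced by a non-square-free monomial is open. Your submission is, by its own admission, a research plan rather than a proof, and both halves of it have genuine gaps. For the lower bound, exhibiting a degree-$(2\alpha_1-\alpha_2)$ binomial in $I_n$ is not enough; you must show it contributes a \emph{minimal} generator of $\mathrm{in}_\preceq(I_n)$ in that degree, i.e.\ that its leading term is not divisible by the leading term of any lower-degree element of $I_n$, and you give no argument for this beyond an appeal to the block description of $\ker_\Z(\matrA_n)$, which the paper only works out for $\alpha=(2,1)$ and explicitly says does not generalize.

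The more serious gap is in your structural route to the upper bound. Step (i) is already problematic: for a fixed term order on $R_n$, the $\symm_n$-action does not commute with taking initial terms, so the sets $\mathrm{in}_\preceq(I_n)$ need not form an invariant chain in the sense of Definition \ref{def:invariantChain}. Step (ii) then cannot be carried out by ``invoking'' a Noetherianity-up-to-symmetry result: Theorem \ref{mainNoethThm} concerns the permutation module $A[\P]^k$, not ideals of $\mathcal{R}_\P$ or $R_\P$, and for $k\geq 2$ equivariant Noetherianity of these polynomial rings fails (the paper cites \cite[Proposition 5.2]{AH07}), which is precisely why the non-Laurent question is open. Finally, re-descending the quadratic bound from $\widetilde{I}_n$ through $\eta$ and $\mu$ cannot give a degree bound on generators of $I_n$: the map $\mu$ of Proposition \ref{mu_construction} inverts monomials, so this mechanism controls only the Laurent chain $I_\circ^{\pm}$ (that is the content of Theorem \ref{mainthm2}) and says nothing about the reduced Gr\"obner bases of $I_n$ itself. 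As it stands your proposal identifies the right obstacles but does not overcome any of them, so it neither proves the conjecture nor goes beyond what the paper already observes.
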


\begin{prob}\label{bound-degree-complexity}
Let $\alpha\in\N^k$ with $\gcd(\alpha)=1$; is the degree-complexity of $I_n$ constant as $n\rightarrow\infty$?
\end{prob}

\begin{table}[h!]
\label{degreechart}
\begin{center}
\begin{tabular}{|c||c|c|c|c|c|c|c|c|}
\hline
$\alpha\setminus n$ & 3&4&5&6&7&8\\
\hline\hline
$(1,1)$ & $1$ & 2 & 2 & 2 & 2 & 2 \\
\hline
$(2,1)$ & 3 & 3 & 3 & 3 & 3 & 3 \\
\hline
$(3,1)$ & 5 & 5 & 5 & 5 & 5 & 5\\
\hline
$(4,1)$ & 7 & 7 & 7 & 7 & 7 & 7\\
\hline
$(5,1)$ & 9 & 9 & 9 & 9 & 9 & 9\\
\hline
$(6,1)$ & 11 & 11 & 11 & 11 & 11 & - \\
\hline
$(7,1)$ & 13 & 13 & 13 & 13 & 13 & -\\
\hline
$(8,1)$ & 15 & 15 & 15 & 15 & - & - \\
\hline
$(3,2)$ &5 & 5 & 5 & 5 & 5 & 5 \\
\hline
$(4,2)$ & 3 & 3 & 3 & 3 & 3 & 3 \\
\hline
$(5,2)$ & 8 & 8 & 8 & 8 & 8 & 8 \\
\hline
$(6,2)$ & 5 & 5 & 5 & 5 & 5 & 5 \\
\hline
$(7,2)$ & 12 & 12 & 12 & 12 & 12 & - \\
\hline
$(1,3,2)$ & 3 & 3 &3 & 3 & - & - \\
\hline
$(4,3,2)$ & 3 & 5 & 5 & 5 & - & - \\
\hline
\end{tabular}
\caption{Degree-complexity of the toric ideal $I_n$ defined by ${\bf y}^\alpha$}
\end{center}
\end{table}

Recall the set $\mathcal B_n$ from \eqref{Bneqn}. Using the fact that $I_{\mathcal B_n}$ is generated by quadratics we show in this paper that for $\matrA_n\subseteq \mathcal B_n$, the chain of ideals $I_{\mathcal A_n}$ has a corresponding Laurent chain $I_{\mathcal A_n}^\pm$ that is stable under the action of $\symm_\P$. 
On the other hand, Conjecture~\ref{conjwmonomial} makes the stronger claim that the chain $I_{\matrA_n}$ stabilizes. While it is difficult to find subsets $\mathcal{C}_n \subseteq \mathcal B_n$ for which the chain of ideals $I_{\mathcal{C}_n}$ is stable under the action of $\symm_\P$, one might get some indications by solving the following problem.
\begin{prob}\label{prob:degreecomplxty}
Find combinatorially defined subsets $\mathcal{C}_n\subseteq \mathcal B_n$ such that the toric ideal $I_{\mathcal{C}_n}$ has constant degree-complexity as $n$ grows.
\end{prob}

This problem is of particular interest in algebraic statistics. For instance, in \cite[Conjecture 7.3]{HMCY2011}, it is conjectured that for any $T\geq 3$ and a fixed $S\geq 3$, the toric ideals of the homogeneous Markov chain model on $S$ states are generated by polynomials of degree at most $S-1$. This is an instance of Problem \ref{prob:degreecomplxty}, as for each $T\geq 3$, the design matrix of such a model is precisely a subset of the matrix $\matrB_n$ for $n=T$.
 
The early stabilization of structured chains appears to be common.  It would be interesting to construct examples of chains with nontrivial lower bounds on stabilization.
\begin{prob}
Let $f(d)$ be an increasing function $f: \mathbb \P \to \mathbb \P$.  Find a family of invariant chains $\left\{I_{\circ}^{(d)} \right\}_{d=1}^{\infty}$ (over $\mathcal{R}_{\P}$ or $R_{\P}$) which have stabilization bound at least $f(d)$. 
\end{prob}
More specifically, we ask whether a linear lower bound holds for the chains in Theorem \ref{mainthm2} and their Laurent counterparts.
\begin{prob}
Is there a constant $C > 0$ such that the chains $\left\{I_{\circ}^{(d)} \right\}_{d=1}^{\infty}$ from Theorem \ref{mainthm2} must have stabilization bounds at least $f(d) = Cd$.  
\end{prob}
\section{Acknowledgements}\label{sec:ack}
We thank Frank Sottile and Seth Sullivant for useful comments.  We also thank Kelli Talaska for helping with the proof of Lemma \ref{columnspan} and Matthias Aschenbrenner for ideas concerning nice orderings.


\bibliographystyle{amsplain}

\bibliography{HM-InvarChainStab}

\providecommand{\bysame}{\leavevmode\hbox to3em{\hrulefill}\thinspace}
\providecommand{\MR}{\relax\ifhmode\unskip\space\fi MR }
\providecommand{\MRhref}[2]{%
  \href{http://www.ams.org/mathscinet-getitem?mr=#1}{#2}
}
\providecommand{\href}[2]{#2}
\begin{thebibliography}{10}

\bibitem{4ti2}
4ti2 team, \emph{4ti2---a software package for algebraic, geometric and
  combinatorial problems on linear spaces}, {A}vailable at www.4ti2.de.

\bibitem{Ahlbrandt-Ziegler}
G~Ahlbrandt and M~Ziegler, \emph{Quasi-finitely axiomatizable totally
  categorical theories, stability in model theory}, Pure Appl. Logic (1984),
  no.~30, 63--82.

\bibitem{AHT10}
Satochi {Aoki}, Hisayuki {Hara}, and Akimichi {Takemura}, \emph{{Minimal and
  minimal invariant Markov bases of decomposable models for contingency
  tables}}, Bernoulli \textbf{16} (2010), no.~1, 208--233.

\bibitem{AH07}
Matthias Aschenbrenner and Christopher~J. Hillar, \emph{Finite generation of
  symmetric ideals}, Trans. Amer. Math. Soc. \textbf{359} (2007), 5171--5192.

\bibitem{BM93}
Dave Bayer and David Mumford, \emph{What can be computed in algebraic
  geometry?}, Computational algebraic geometry and commutative algebra
  ({C}ortona, 1991), Sympos. Math., XXXIV, Cambridge Univ. Press, Cambridge,
  1993, pp.~1--48. \MR{1253986 (95d:13032)}

\bibitem{Brouwer-Draisma11}
Andries~E. Brouwer and Jan Draisma, \emph{Equivariant {G}r\"obner bases and the
  {G}aussian two-factor model}, Math. Comp. \textbf{80} (2011), no.~274,
  1123--1133. \MR{2772115}

\bibitem{Camina-Evans}
Alan~R. Camina and David~M. Evans, \emph{Some infinite permutation modules},
  Quart. J. Math. Oxford \textbf{42} (1991), no.~2, 15--26.

\bibitem{Cohen67}
D.~E. Cohen, \emph{On the laws of a metabelian variety}, Journal of Algebra
  \textbf{5} (1967), no.~3, 267--273.

\bibitem{Cohen77}
Daniel~A. Cohen, \emph{Closure relations, buchberger's algorithm, and
  polynomials in infinitely many variables}, pp.~78--87, Springer-Verlag,
  London, UK, 1987.

\bibitem{CLO07}
David~A. Cox, John~B. Little, and Donald O'Shea, \emph{Ideals, varieties, and
  algorithms: an introduction to computational algebraic geometry and
  commutative algebra}, 3rd. ed., Undergraduate texts in mathematics, vol.~10,
  Springer, New York-Berlin, 2007.

\bibitem{DEMO09}
Zajj Daugherty, Alexander~K. Eustis, Gregory Minton, and Michael~E. Orrison,
  \emph{Voting, the symmetric group, and representation theory}, The American
  Mathematical Monthly \textbf{116} (2009), no.~8, pp. 667--687 (English).

\bibitem{Draisma2010}
Jan Draisma, \emph{Finiteness for the k-factor model and chirality varieties},
  Adv. Math. \textbf{223} (2010), 243--256.

\bibitem{DraismaKuttler11}
Jan Draisma and Jochen Kuttler, \emph{Bounded-rank tensors are defined in
  bounded degree}, arXiv:1103.5336 (2011).

\bibitem{DSS07}
Mathias Drton, Bernd Sturmfels, and Seth Sullivant, \emph{Algebraic factor
  analysis: tetrads, pentads and beyond}, Probab. Theory Relat. Fields
  \textbf{138} (2007), 463--493.

\bibitem{Eisenbud95}
David Eisenbud, \emph{Commutative algebra with a view toward algebraic
  geometry}, Graduate texts in mathematics, vol. 150, Springer-Verlag, New
  York, 1995.

\bibitem{EisSturm96}
David Eisenbud and Bernd Sturmfels, \emph{Binomial ideals}, Duke Math. J.
  \textbf{84} (1996), no.~1, 1--45. \MR{1394747 (97d:13031)}

\bibitem{GMV10}
J.~I. {Garc{\'{\i}}a-Garc{\'{\i}}a}, M.~A. {Moreno-Fr{\'{\i}}as}, and
  A.~{Vigneron-Tenorio}, \emph{{On the decomposable semigroups and their
  applications in Algebraic Statistics}}, arXiv:1006.2557 (2010).

\bibitem{M2}
Daniel~R. Grayson and Michael~E. Stillman, \emph{Macaulay2, a software system
  for research in algebraic geometry}, Available at
  http://www.math.uiuc.edu/Macaulay2/.

\bibitem{TakemuraHara10b}
Hisayuki Hara and Akimichi Takemura, \emph{A markov basis for two-state toric
  homogeneous markov chain model without initial paramaters}, arXiv:1005.1717
  (2010).

\bibitem{TakemuraHara10a}
\bysame, \emph{Markov chain monte carlo test of toric homogeneous markov
  chains}, arXiv:1004.3599 (2010).

\bibitem{HMCY2011}
David {Haws}, Abraham {Mart\'in del Campo}, and Ruriko {Yoshida}, \emph{{Degree
  bounds for a minimal Markov basis for the three-state toric homogeneous
  Markov chain model}}, arXiv:1108.0481 (2011).

\bibitem{HGK2004}
Michiel Hazewinkel, Nadezhda~Mikha\u{\i}lovna Gubareni, and Vladimir~V.
  Kirichenko, \emph{Algebras, rings and modules}, Mathematics and its
  applications, vol. 575, Kluwer Academic Publishers, Dordrecht London, 2004.

\bibitem{Hig52}
G.~Higman, \emph{Ordering by divisibility in abstract algebras}, Proc. London
  Math. Soc. \textbf{2} (1952), 326--336.

\bibitem{WWW_ourcode}
Christopher~J. Hillar and Abraham Mart{\'{i}}n~del Campo, \emph{Code to compute
  symmetric invariant generators}, 2010, \url
  {http://www.math.tamu.edu/~asanchez/Files/Code/symmChainGens.m2}.

\bibitem{HS}
Christopher~J. Hillar and Seth Sullivant, \emph{Finite gr\"oebner bases in
  infinite dimensional polynomial rings and applications}, Adv. Math. (2011).

\bibitem{Hosten2007}
Serkan Ho\c{s}ten and Seth Sullivant, \emph{A finiteness theorem for markov
  bases of hierarchical models}, J. Comb. Theory Ser. A \textbf{114} (2007),
  no.~2, 311--321.

\bibitem{Kruskal72}
Joseph~B. Kruskal, \emph{The theory of well-quasi-ordering: {A} frequently
  discovered concept}, J. Combinatorial Theory Ser. A \textbf{13} (1972),
  297--305. \MR{0306057 (46 \#5184)}

\bibitem{Kuo06}
Eric~H. Kuo, \emph{Viterbi sequences and polytopes.}, Journal of Symbolic
  Computation (2006), 151--163.

\bibitem{LM2004}
J.~M. Landsberg and L.~Manivel, \emph{On the ideals of secant varieties of
  segre varieties}, Found. Comput. Math. \textbf{4} (2004), 397--422.

\bibitem{CCA}
Ezra Miller and Bernd Sturmfels, \emph{Combinatorial commutative algebra},
  Graduate text in mathematics, vol. 227, Springer-Verlag, New York, 2005.

\bibitem{NW1}
C.~St. J.~A. Nash-Williams, \emph{On well-quasi-ordering finite trees}, Proc.
  Cambridge Philos. Soc. (1963), no.~59, 833--835.

\bibitem{Raicu2010}
Claudiu {Raicu}, \emph{{The GSS Conjecture}}, arXiv:1011.5867 (2010).

\bibitem{RSU67}
Ernst Ruch, Alfred Sch\"{o}nhofer, and Ivar Ugi, \emph{Die vandermondesche
  determinante als n\"{a}herungsansatz f\"{u}r eine chiralit\"{a}tsbeobachtung,
  ihre verwendung in der stereochemie und zur berechnung der optischen
  aktivit\"{a}t}, Theoretical Chemistry Accounts: Theory, Computation, and
  Modeling (Theoretica Chimica Acta) \textbf{7} (1967), 420--432,
  10.1007/BF00526408.

\bibitem{Santos2003}
Francisco Santos and Bernd Sturmfels, \emph{Higher lawrence configurations}, J.
  Comb. Theory Ser. A \textbf{103} (2003), no.~1, 151--164.

\bibitem{DLS09}
Roberto~La Scala and Viktor Levandovskyy, \emph{Letterplace ideals and
  non-commutative gr{\"o}bner bases}, Journal of Symbolic Computation
  \textbf{44} (2009), no.~10, 1374--1393.

\bibitem{Snowden10}
Andrew {Snowden}, \emph{{Syzygies of Segre embeddings}}, arXiv:1006.5248
  (2010).

\bibitem{SturmfelsGBCP}
B.~Sturmfels, \emph{Gr\"obner bases and convex polytopes}, American
  Mathematical Society, Univ. Lectures Series, no.~8, Providence, Rhode Island,
  1996.

\bibitem{SturmfelsSullivant05}
Bernd Sturmfels and Seth Sullivant, \emph{Toric ideals of phylogenetic
  invariants}, Journal of Computational Biology \textbf{12} (2005), no.~2,
  204--228.

\bibitem{Yap2000}
Chee~Keng Yap, \emph{Fundamental problems of algorithmic algebra}, Oxford
  University Press, New York, 2000. \MR{1740761 (2000m:12014)}

\end{thebibliography}
\label{sec:biblio}

\end{document}